\def\Cal{\mathcal} 
\def\frak{\mathfrak}
\def\ot{\leftarrow} 
\def\<<{\langle } 
\def\>>{\rangle }
\numberwithin{equation}{section} 
\newtheorem{theorem}{Theorem}[section] 
\newtheorem{proposition}[theorem]{Proposition} 
\newtheorem{corollary}[theorem]{Corollary} 
\newtheorem{definition}[theorem]{Definition} 
\newtheorem{conjecture}[theorem]{Conjecture}
\newtheorem{example}[theorem]{Example}
\def\Ker{\operatorname{Ker}}
\def\<{\langle} 
\def\>{\rangle}
\begin{document} 
 
\title{The Artin-Schreier DGA and the $\bold F_p$-fundamental 
group of an $\bold F_p$ scheme} 
 
\author{Tomohide Terasoma}

\maketitle 
 
\makeatletter 
\renewcommand{\@evenhead}{\tiny \thepage \hfill 
\hfill}

\renewcommand{\@oddhead}{\tiny \hfill Tomohide Terasoma
 \hfill \thepage} 
%\makeatother 
\tableofcontents
\section{Introduction and convention}
\subsection{Introduction}
N.Katz proved the following theorem.
\begin{theorem}
\label{Katz theorem}
Let $X$ be a connected separated scheme of 
finite type over $\bold F_p$ and $F:X\to X$
the absolute Frobenius map.
Then there exists an equivalence of categories between
\begin{enumerate}
\item
the category $(LS_{\bold F_p}/X_{et})$
of smooth etale $\bold F_p$ sheaves on $X$ of finite rank, and
\item
the category $(UR_{\bold F_p}/X)$ of pairs $(\Cal F, \varphi)$ consisting of
\begin{enumerate}
\item
a locally free $\Cal O_X$ module $\Cal F$ of finite rank and
\item
an isomorphism $\varphi:F^*\Cal F \to \Cal F$ of locally free
$\Cal O_X$ modules.
\end{enumerate}

\end{enumerate}
\end{theorem}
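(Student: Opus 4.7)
The plan is to construct explicit quasi-inverse functors $\alpha:(LS_{\bold F_p}/X_{et})\to (UR_{\bold F_p}/X)$ and $\beta:(UR_{\bold F_p}/X)\to(LS_{\bold F_p}/X_{et})$, then verify that $\beta\circ\alpha\cong\mathrm{id}$ and $\alpha\circ\beta\cong\mathrm{id}$. The heart of the argument is an Artin--Schreier-style computation showing that every object of $(UR_{\bold F_p}/X)$ is étale-locally trivial in a very explicit way.

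For $\alpha$, I would send $\Cal L$ to $\Cal F_{\Cal L}:=\Cal L\otimes_{\bold F_p}\Cal O_X$, formed in the étale topos and then descended; the result is a Zariski-locally free $\Cal O_X$-module because $\Cal L$ is smooth étale. The Frobenius structure $\varphi_{\Cal L}:F^*\Cal F_{\Cal L}=\Cal L\otimes_{\bold F_p}F^*\Cal O_X\to\Cal F_{\Cal L}$ is built from the identity on $\Cal L$ (the absolute Frobenius acts trivially on the constant sheaf $\bold F_p$) combined with the canonical map $F^*\Cal O_X\to\Cal O_X$.

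For $\beta$, define $\beta(\Cal F,\varphi)$ to be the étale sheaf whose sections over $U\to X$ are those $s\in\Cal F(U)$ satisfying $\varphi(F^*s)=s$. To see this is smooth étale of the same rank as $\Cal F$, trivialize $\Cal F$ Zariski-locally by a basis $e_1,\dots,e_n$, write $\varphi(1\otimes e_j)=\sum_i a_{ij}e_i$ with $A\in GL_n(\Cal O_X)$, and set $B=A^{-1}$. Writing $s=\sum_j x_je_j$, the equation $\varphi(F^*s)=s$ is equivalent to the system $x_k^p=\sum_i b_{ki}x_i$ for $k=1,\dots,n$. The Jacobian $\partial(x_k^p-\sum_i b_{ki}x_i)/\partial x_l=-b_{kl}$ equals $-B\in GL_n(\Cal O_X)$, so $\Cal O_X[x_1,\dots,x_n]/(x_k^p-\sum_i b_{ki}x_i)_k$ is a finite étale $\Cal O_X$-algebra of rank $p^n$, on which $\beta(\Cal F,\varphi)$ becomes the constant sheaf $\bold F_p^n$.

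For $\beta\circ\alpha$, étale-locally $\Cal L$ is constant, reducing the claim to $\Cal O_X^{F=1}=\bold F_p$, i.e.\ the classical fact that the equation $x^p=x$ cuts out exactly the $p$ elements of $\bold F_p$ in any reduced $\bold F_p$-algebra. For $\alpha\circ\beta$ I would show that the natural evaluation morphism $\beta(\Cal F,\varphi)\otimes_{\bold F_p}\Cal O_X\to\Cal F$ is an isomorphism compatible with the Frobenius structures, by checking it on the étale cover constructed above, where both sides become free of rank $n$ and the map is tautologically the identity, and concluding by faithfully flat descent. The main obstacle, and the only step requiring real input beyond formalities, is the étaleness/Jacobian computation in the construction of $\beta$, which rests crucially on $\varphi$ being an isomorphism (so that $A$, hence $B$, lies in $GL_n$); once this is in hand, functoriality of $\alpha$ and $\beta$ on morphisms and the verification of the quasi-inverse isomorphisms are formal.
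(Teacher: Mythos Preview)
The paper does not actually prove this theorem: it is quoted as a result of Katz (reference [K]), and the text following the statement only sketches the functor $(UR_{\bold F_p}/X)\to(LS_{\bold F_p}/X_{et})$ as $W\mapsto\Ker(\Phi^*\circ{}^a\varphi-1)$ and remarks that the inverse ``is based on the etale descent theory of locally free $\Cal O_X$ sheaves of finite rank.'' Your proposal is a correct fleshing-out of exactly this sketch: your $\beta$ is the paper's fixed-point functor, your $\alpha$ is the descent construction the paper alludes to, and your Jacobian computation is the standard Artin--Schreier argument showing smoothness of the resulting \'etale sheaf.

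One small remark: in verifying $\beta\circ\alpha\cong\mathrm{id}$ you invoke $\Cal O_X^{F=1}=\bold F_p$ ``in any reduced $\bold F_p$-algebra,'' but reducedness is neither assumed in the theorem nor needed. Over any connected $\bold F_p$-algebra $R$, if $x^p=x$ then $\prod_{a\in\bold F_p}(x-a)=0$, and the associated Lagrange idempotents force $x\in\bold F_p$; equivalently, the Artin--Schreier sequence $0\to\bold F_p\to\Cal O_X\overset{F-1}\to\Cal O_X\to 0$ is exact on the \'etale site of an arbitrary $\bold F_p$-scheme. Also, the claim that on your explicit \'etale cover ``the map is tautologically the identity'' in the verification of $\alpha\circ\beta$ deserves one more line: you are using that over the cover the $p^n$ solutions of the system $x_k^p=\sum_i b_{ki}x_i$ form an $\bold F_p$-vector space whose basis maps to a basis of $\Cal F$ (a Lang-type statement), which is exactly what your \'etaleness computation provides.
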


In this theorem, the the functor from $(UR_{\bold F_p}/X)$ to $(LS_{\bold F_p}/X_{et})$
is given as follows. Let $h:W\to X$ be an etale morphism and $F_X:X\to X$
and $F_W:W \to W$ the absolut frobenius maps. Then the relative
frobenius map $\Phi$ is defined by the following commutative diagram:

\setlength{\unitlength}{0.65mm}
\begin{picture}(200,40)(-14,3)
\put(50,2){\vector(1,0){30}}
\put(65,20){\vector(1,0){15}}
\put(47,16){\vector(0,-1){10}}
\put(84,16){\vector(0,-1){10}}
\put(86,10){$h$}
\put(45,0){$X$}
\put(60,4){$F_X$}
\put(82,0){$X$}
\put(38,18){$W\times_{X,F}X$}
\put(82,18){$W$}
\put(20,32){$W$}
\put(28,34){\vector(4,-1){52}}
\put(26,30){\vector(2,-3){18}}
\put(27,13){$h$}
\put(60,29){$F_W$}
\put(27,32){\vector(1,-1){10}}

\put(31,27){$\Phi$}

\end{picture}
\vskip 0.2in
\noindent
The map $\Phi :W\to W\times_{X,F}X$ induces a map 
$\varphi^*:F_*\Cal F(U)=\Cal F(U\times_{X,F}X)\to \Cal F(U)$,
and we have the composite map $\Phi^*\circ{\ ^a\varphi}:\Cal F(U)\to \Cal F(U)$ 
of $\Phi^*$ and the adjoint $^a\varphi:\Cal F(U)
\to F_*\Cal F(U)$ of the map $\varphi$.
Then the functor $W\mapsto Ker(\Phi^*\circ{\ ^a\varphi}-1)\subset \Cal
F(W)$ becomes a smooth $\bold F_p$ etale sheaf of finte rank on $X_{et}$.
The construction of the inverse functor is based on the etale descent theory of locally free 
$\Cal O_X$ sheaves of finite rank.

An object in the category $(UR_{\bold F_p}/X)$ is sometimes called 
a unit root $\bold F_p$-F-crystal on
$X$.
A smooth etale $\bold F_p$ sheaf $V$ is called unipotent if there
is a filtration $\{F^iV\}$ such that the associated graded
sheaf $Gr^i_F(V)$ is a constant sheaf on
$X$. The full subcategory of unipotent $\bold F_p$ etale local systems
is denoted by $(NLS_{\bold F_p}/X_{et})$. 
The full subcategory of $(UR_{\bold F_p}/X)$ corresponding to
$(NLS_{\bold F_p}/X_{et})$ is denoted by $(NUR_{\bold F_p}/X)$.
For the moment,
we assume that there exists an $\bold F_p$-valued point $x$ in $X$. By
taking a geometric point $\bar x$ over $x$, we have the fundamental group
of the category $(NLS_{\bold F_p}/X_{et})$ with respect to the base point
$\bar x$, which is called the $\bold F_p$-completion 
$\pi_1(X,\bar x)^{\bold F_p}$
of the etale fundamental group of $X$.
Note that $\pi_1(X,\bar x)^{\bold F_p}$ is isomorphic to the
pro-p completion of $\pi_1(X,\bar x)$. (See Section \ref{appendix}.)

Let $\bold F_p[[\pi_1(X,\bar x)]]^*$ be the topological dual 
of the completion of $\bold F_p[\pi_1(X,\bar x)]$
with respect to the topology defined by the augmentation ideal
$I$.
Then it is equipped with a structure of Hopf algebra.
The $\bold F_p$-completion 
$\pi_1(X,\bar x)^{\bold F_p}$
is isomorphic to the group of group like elements in
the Hopf algebra $\bold F_p[[\pi_1(X,\bar x)]]^*$.
In this paper, we show that a Hopf algebra 
$\bold F_p[[\pi_1(X,\bar x)]]^*$ is isomorphic
to the cohomology of the bar complex of the Artin-Schreier
DGA of $X$ in Section \ref{AS DGA}. 

The bar construction is a
standard construction to get a coalgebra
out of an associative differential graded algebra $A^{\bullet}$ (DGA for short).
If the DGA $A^{\bullet}$
is graded commutative, then the bar complex $B(A^{\bullet},\epsilon)$
has a multiplication structure defined by the shuffle product.
Using this shuffle product, $H^0(B_{red}(A^{\bullet},\epsilon))$ has
a structure of Hopf algebra. 
Unfortunately Artin-Schreier DGA's
are not graded commutative in general. Therefore the shuffle product construction
does not work as it is. In Section \ref{Shuffle}, we define 
the shuffle product
on $\bold B_{red}(A,\epsilon)$ up to homotopy.

Here we give the definition of
Artin-Schreier DGA in the case of affine scheme $X=Spec(R)$, 
which is the easiest case. As a complex, $A^{\bullet}$
is a complex of length two, defined by the Artin-Schreier map
$$
A^0=R\overset{d}\to A^1=R:x\mapsto x^p-x.
$$
For an element $x\in R$, $(x)_i$ denote the element
in $A^i$ corresponding to $x$.
We introduce a multiplication structure $\cup$ on $A^{\bullet}$
as follows.
$$
((x)_0+ (\xi)_1)\cup((y)_0+ (\eta)_1)=
(xy)_0+ (x\eta+y^p\xi)_1.
$$
Actually, by this multipliation, $A^{\bullet}$ becomes
a DGA, by the direct calculation:
\begin{align*}
&d(((x)_0+(\xi)_1)\cup ((y)_0+(\eta)_1)) \\
=&
(x^py^p-xy)_1=(y^p(x^p-x))_1+(x(y^p-y))_1 \\
=&(x^p-x)_1\cup ((y)_0+(\eta)_1) +((x)_0+(\xi)_1)\cup (y^p-y)_1 \\
=&
d((x)_0+(\xi)_1)\cup ((y)_0+(\eta)_1)+(x)_0\cup d((y)_0+(\eta)_1) \\
&-(\xi)_1\cup d((y)_0+(\eta)_1))
\end{align*}

We give a Cech theoretic interpretation of the category 
$(NUR_{\bold F_p}/X)$ and the
Artin-Schreier DGA $A^{\bullet}$ in Section \ref{Recall}. 
To understand the relation between
Artin-Schreier DGA and the category $(NUR_{\bold F_p}/X)$, we will
recall the patching of algebras and pathching of DG categories.

In Section \ref{Shuffle}, we treat the product structure on
the cohomology of the bar complex. Though the Artin-Schreier DGA
$A^{\bullet}$ is not graded commutative, it is commutative up to homotopy.
In this section, we define a shuffle product up to higher homotopy.

{\bf Acknowledgement}.
The author would like to express his gratitude to Prof. L.Illusie and
H.Esnault for pointing out the relation about the work of 
\cite{K} and \cite{EK}. 
He would also express his thanks to H.Estanult for the discussions,
namely about the relation between pro-$p$ and $\bold F_p$ completions.
and L. Illusie for several comments on relation diagrams.
The author thanks R.Hain for letting him know about Thom-Whitney
construction for strict simplicial graded commutative DGA over
a characteristic zero filed. The author also thanks the anonymous referee
for remarks on the Conjecture \ref{homotopy commutative conjecture}.

\subsection{Convention}
\label{convention}
Let $\bold k$ be a field.
For a $\bold k$-linear abelian tensor category 
$\Cal C$, the category of complexes in $\Cal C$
is denoted as $K\Cal C$. By considering morphisms of complexes,
$K\Cal C$ becomes a $\bold k$-linear abelian category.

An object $A$ in $KK\Cal C$ is a complex $\{(A^{\bullet i},d)\}_i$ in
 $K\Cal C$. The differential $d$ of $A^{\bullet ,i}$ is called
the inner differential and the differential 
$\delta:A^{\bullet i} \to A^{\bullet i+1}$
is called the outer differential.
We define the associated simple complex $s(A)$ of $A$ by
$s(A)=\oplus_i A^{\bullet i}\otimes \bold k[-i]
=\oplus_i A^{\bullet i}[-i]$.
The total differential $D$ is defined as $d\otimes 1+\delta\otimes t$.
where $t:\bold k[-i]\to \bold k[-i-1]$ is a degree one map defined by
$\bold k[-i]^{i}\to \bold k[-i-1]^{i+1}:1\mapsto 1$.
The degree $-i$ element corresponding to $1$ in $\bold k[i]$
is denoted as $e^i$. The symbol $e^1$ is also denoted as $e$.
Using this notation, $s(A)$ can be written as $\oplus_i A^{\bullet i}e^{-i}$.

For objects
$A=A^{\bullet}$ and
$B=B^{\bullet}$ in $K\Cal C$, we define the tensor product $A\otimes B$
as an object in $K\Cal C$
by the rule $(A\otimes B)^p=\oplus_{i+j=p}A^i\otimes B^j$ and
$d(a\otimes b)=da\otimes b+(-1)^{deg(a)}a\otimes db$ for homogeneous
``elements'' $a$ and $b$ in $A$ and $B$.
We have a canonical switching isomorphsim 
\begin{equation}
\label{switching homomorphism}
\sigma:A\otimes B\simeq B\otimes A
\end{equation}
defeind by $\sigma(a\otimes b)=(-1)^{deg(a)deg(b)}b\otimes a$.
This switching rule can be extended to tensors of arbitrary number:
$$
\sigma:A_1\otimes\cdots \otimes A_n\to A_{\sigma(1)}\otimes
\cdots \otimes A_{\sigma(n)}
$$
for $\sigma\in \frak S_n$.
We always use an identification $A[p]\otimes B[q]\simeq A\otimes B[p+q]$
by the following composite:
\begin{align*}
A[p]\otimes B[q]&\simeq A\otimes \bold k[p]\otimes B\otimes \bold k[q] \\
&\overset{\sigma}\simeq A\otimes B\otimes \bold k[p]\otimes \bold k[q] \\
&\simeq A\otimes B\otimes \bold k[p+q].
\end{align*}
Here $\sigma$ is the switching homomorphism.

Let $A=A^{\bullet},B^{\bullet}$ be complexes in $\Cal C$
and $p$ be an integer.
A system of morphisms $A^i\to B^{i+p}$ indexed by $i\in \bold Z$
is called a homogeneous morphism of degree $p$ from $A$ to $B$.
Let $A_1,A_2,B_1$ and $B_2$ be complexes in the category $\Cal C$
and $\varphi:A_1\to A_2$ and $\psi:B_1\to B_2$ be homogeneous morphisms
of degree $p$ and $q$, 
respectively. Then the tensor product 
$$
\varphi\otimes \psi:A_1\otimes B_1 \to A_2\otimes B_2
$$
of $\varphi$ and $\psi$ is defined by the 
rule $(\varphi\otimes\psi)(a\otimes b)=(-1)^{q\deg(a)}
\varphi(a)\otimes \psi(b)$. 
Using this convention, the differential $d\otimes 1$ on $A[p]=A\otimes k[p]$
is $(d\otimes 1)(x\otimes e^p)=dx\otimes e^p$.
We remark that some references prefer sign convention 
$d_{A[p]}(x)=(-1)^{\deg(x)}d_A(x)$ for homogeneous
element $x \in A^{\bullet}[p]$, which is denoted $\bold k[p]\otimes
A^{\bullet}=e^pA^{\bullet}$ in this paper to aviod 
confusing convention ``$[p]A^{\bullet}$''.
The differential of a complex $A$ is a homogeneous map from $A$ to
$A$ of degree one.
Therefore the differential of the tensor complex $A\otimes B$ defined 
as above is written
as $d\otimes 1 +1\otimes d$.
The differential $d$ of $A$ can be considered as a homogeneous map of
degree zero $d\otimes t^{-1}$ from
$A$ to $A[1]$ by setting 
\begin{equation}
\label{differnetial as degree zero map}
x=x\otimes 1\mapsto (-1)^{\deg x}dx\otimes e.
\end{equation}
For homogeneous maps $\varphi_1:A_1\to
A_2,\varphi_2:A_2\to A_3$ and
$\psi_1:B_1\to
B_2,\psi_2:B_2\to B_3$, we have 
$$
(\varphi_2\otimes \psi_2)\circ(\varphi_1\otimes \psi_1)
=(-1)^{\deg\psi_2\cdot \deg\varphi_1}
(\varphi_2\circ\varphi_1)\otimes (\psi_2\circ\psi_1).
$$
A homogeneous map $L\otimes M\to N$ is extended to
$L[p]\otimes M[q] \to N[p+q]$ by the composite
\begin{equation}
\label{extention rule}
L\otimes \bold k[p]\otimes M\otimes \bold k[q]\simeq
L\otimes M\otimes  \bold k[p]\otimes \bold k[q]\simeq
L\otimes M\otimes  \bold k[p+q]\to N[p+q].
\end{equation}

For objects $A=A^{\bullet,\bullet}, B^{\bullet,\bullet}$  in $KK\Cal C$, 
the tensor
product $A\otimes B$ is defined as an object in $KK\Cal C$.
Then we have a natural isomorhpism in $K\Cal C$ 
$$
\nu:s(A)\otimes s(B)\simeq s(A\otimes B)
$$
defined by $\nu(a\otimes b)=(-1)^{ji'}a\otimes b$ for
$a\in A^{ij},b\in B^{i'j'}$. This isomorphism is compatible with
the switching of tensor and natural associativity isomorphism.

The set $\{m,m+1, \dots, n\}$ is denoted by $[m,n]$ and $[0,n]$ is
denoted by $[n]$. We define the category of simplexes $\Delta$ by setting the set
of objects to be the set of simplexes $\{[n]\mid n\geq 0\}$ and 
the set of morphisms to be the set of
weakly increasing maps between them.

We use copies $\overline{1}, \dots, \overline{n}$
of numbers $1,\dots, n$ which is totally ordered in the natural way.

\section{$(UR_{\bold F_p}/X)$ and quotient by relation}
\label{Recall}

In this subsection, we give an interpretation of objects 
in $(UR_{\bold F_p}/X)$ 
as locally free sheaves
on the quotient ``space'' by a relation diagram.
\begin{definition}
\begin{enumerate}
\item
The following diagram $R$ of schemes
\begin{equation}
\label{relation diagram}
R:X
\begin{matrix}
\overset{f_0}\to \\ \underset{f_1}\to
\end{matrix}
Y
\end{equation}
is called a relation diagram.
\item
For a relation diagram $R$, we define 
a (locally free) $\Cal O_{R}$-module of finite rank as a pair
$(\Cal M,\varphi)$ consisting of (locally free) $\Cal O_Y$ 
module $\Cal M$ of finite rank and an isomorphism
$f^*_0\Cal M\simeq f^*_1\Cal M$,
Since we have
$can:f^*_0\Cal O_Y\simeq\Cal O_X\simeq f^*_1\Cal O_Y$,
$(\Cal O_Y,can)$ is an $\Cal O_R$ module. It is written
$\Cal O_R$ for simplicity.
\item
Let $R$ be a relation diagram (\ref{relation diagram}) and 
$\Cal M=(\Cal M, \varphi)$ be a locally free $\Cal O_R$-module
of finite rank.
A filtration $F^{\bullet}_{\Cal M}$
on $\Cal M$
is called a nilpotent filtration if
\begin{enumerate}
\item
the filtrations $f^*_1 F^{\bullet}_{\Cal M}$ and
$f^*_2 F^{\bullet}_{\Cal M}$ coincide under the isomorphism
$\varphi$, and
\item
the associated graded $\Cal O_{R}$ modules are sum of copies of
$\Cal O_R$. 
\end{enumerate}
A locally free $\Cal O_R$-module of finite rank with a nilpotent filtration
is called a nilpotent $\Cal O_R$-module.
\end{enumerate}
\end{definition}
\begin{example}
Let
$$
R:Spec(\bold k)
\begin{matrix}
\overset{f_0}\to \\ \underset{f_1}\to
\end{matrix}
Spec(\bold k[t])
$$
be a relation diagram, where $f_0$ and $f_1$
are the evaluation map at $0$ and $1$, respectively.
Then the category of
$\Cal O_{R}$-modules
is naturally equivalent to the category of $\Cal O_C$-modules 
where $C=Spec(Ker(\bold k[t]\overset{ev_0-ev_1}\to \bold k)))$
is the nodal affine line.
\end{example}
In the same way, a diagram 
\begin{equation}
\label{DGA relation}
A^{\bullet,0}
\begin{matrix}
\overset{f_0}\to \\ \underset{f_1}\to
\end{matrix}
A^{\bullet,1}
\end{equation}
of DGA's is called a relation diagram of DGA's.
For a relation diagram (\ref{DGA relation}), we denote $s(A)$ 
by the 
associated simple complex of
$$
\underset{\text{degree $0$}}{A^{\bullet,0}}\overset{f_1-f_0}\longrightarrow
\underset{\text{degree $1$}}{A^{\bullet,1}}.
$$
Then $s(A)=A^{\bullet 0}\oplus 
A^{\bullet 1}e^{-1}$ is equipped with a structure $\cup$ of DGA by
setting 
\begin{align*}
a\cup b&=a_0b_0+a_1b_1+f_0(a_0)e^{-1}b_{01}+a_{01}f_1(b_1)e^{-1} \\
&=a_0b_0+a_1b_1+(-1)^{\deg b_{01}}f_0(a_0)b_{01}e^{-1}+a_{01}f_1(b_1)e^{-1}
\end{align*}
where $a=a_0+a_1+a_{01}e^{-1},b=b_0+b_1+b_{01}e^{-1}$, where
$a_{0},b_{0}\in A_{0}$, $a_{1},b_{1}\in A_{1}$ and
$a_{01},b_{01}\in A_{01}$ are homogeneous elements. 
Note that 
$\deg(b_{01}e^{-1})=\deg(b_{01})+1$.

Since the unipotencies for two categories $(LC_{\bold F_p}/X_{et})$ and
$(UR_{\bold F_p}/X)$ correspond to each other by the functor stated
just after the statement of Theorem \ref{Katz theorem}, the following proposition
is a consequence of this theorem.
\begin{proposition}
\label{restatement of Katz}
Let $X$ be a scheme of finite type over $\bold F_p$ and $R$ be
the relation diagram
\begin{equation}
\label{frb-id rel daig sch}
X
\begin{matrix}
\overset{id_X}\to \\
\underset{F_X}\to
\end{matrix}
X.
\end{equation}
Then the category of locally free sheaves of finite rank on $\Cal O_R$ is
equivalent to the category of $\bold F_p$- smooth etale
sheaves of finite rank on $X$.
Moreover the category of nilpotent $\Cal O_R$-modules is
equivalent to the category of nilpotent $\bold F_p$ smooth
sheaves on $X$.
\end{proposition}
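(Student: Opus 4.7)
The plan is to reduce the first assertion to a direct translation of Theorem \ref{Katz theorem} through the definitions, and then to verify that Katz's equivalence restricts to the nilpotent subcategories.

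First I would unpack the definition of an $\Cal O_R$-module for the specific relation diagram (\ref{frb-id rel daig sch}): such an object is a locally free $\Cal O_X$-module $\Cal M$ together with an isomorphism $id_X^*\Cal M\simeq F_X^*\Cal M$. Since $id_X^*\Cal M$ is canonically $\Cal M$, this datum is equivalent (by inversion) to an isomorphism $\varphi:F_X^*\Cal M\to\Cal M$, i.e.\ an object of $(UR_{\bold F_p}/X)$; morphisms are $\Cal O_X$-linear maps commuting with the two identifications, which is exactly what a morphism in $(UR_{\bold F_p}/X)$ is. Hence the two categories agree on the nose, and the first assertion is a direct reformulation of Theorem \ref{Katz theorem}.

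For the nilpotent part I would verify two compatibilities. (i) Katz's equivalence sends the rank-$r$ trivial $\Cal O_R$-module $\Cal O_R^r=(\Cal O_X^r,id)$ to the constant sheaf $\underline{\bold F_p}^r$. For $r=1$ the functor $W\mapsto \Ker(\Phi^*\circ{}^a\varphi - 1)$ applied to $(\Cal O_X,id)$ is precisely the Artin--Schreier kernel $\{u\in\Cal O_X(W)\mid u^p=u\}=\bold F_p$, and the general case follows by additivity. (ii) Both directions of Katz's equivalence are exact: the inverse functor $V\mapsto V\otimes_{\bold F_p}\Cal O_X$ with the natural Frobenius is manifestly exact, and the left exact functor $\Ker(\Phi^*\circ{}^a\varphi -1)$, combined with the etale descent used in Katz's construction, preserves short exact sequences of unit root crystals.

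Combining (i) and (ii), a filtration $F^{\bullet}_{\Cal M}$ satisfying the two defining conditions of a nilpotent $\Cal O_R$-module (the two pullbacks agree under $\varphi$, and each $Gr^i_F$ is a sum of copies of $\Cal O_R$) transports to a filtration of the associated local system whose graded pieces are constant, i.e.\ a unipotent filtration; the construction reverses, giving the desired equivalence. The main obstacle I anticipate is the exactness of Katz's functor in both directions with enough precision to transport filtrations and associated gradeds across the equivalence; once this is secured the nilpotent correspondence becomes a formal consequence of the identification of the trivial $\Cal O_R$-modules with the constant $\bold F_p$-sheaves.
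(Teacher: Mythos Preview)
Your proposal is correct and follows exactly the line the paper takes: the paper simply remarks (in the sentence immediately preceding the proposition) that this is a consequence of Theorem \ref{Katz theorem} once one notes that the unipotencies in $(LS_{\bold F_p}/X_{et})$ and $(UR_{\bold F_p}/X)$ correspond under Katz's functor. Your write-up is a careful unpacking of that one-sentence justification, including the identification of $\Cal O_R$-modules with objects of $(UR_{\bold F_p}/X)$ and the check that the trivial object maps to the constant sheaf; nothing in it diverges from the paper's approach.
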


\section{Bar complex and Cech complex}
\subsection{The bar complex and nilpotent connections}
We recall the definition of the reduced bar complex 
and the simplicial bar complex
of a DGA $A^{\bullet}$
over a field $\bold k$ with
an augmentation $\epsilon:A^{\bullet}\to \bold k$. 
The multiplication of $A^{\bullet}$ is written as
``$\cdot$''.
Let
$I=Ker(\epsilon:A^{\bullet}\to \bold k)$ be the 
augmentation ideal of $\epsilon$.

We define the double complex
$$
\begin{matrix}
\bold B_{red}(A^{\bullet},\epsilon):&
\cdots \overset{d_B}\to &
I^{\otimes 2}&\overset{d_B}\to& 
I^{\otimes 1} &\to& \bold k,\\  
& & \Vert
& & \Vert
& & \Vert \\
& & \bold B_{red}(A^{\bullet},\epsilon)^{-2}
& & \bold B_{red}(A^{\bullet},\epsilon)^{-1}
& &  \bold B_{red}(A^{\bullet},\epsilon)^{0}
\end{matrix}
$$
where $d_B:I^{\otimes p} \to I^{\otimes (p-1)}$ is given
by
\begin{align}
\label{definition of reduced bar differential}
d_B(x_1 \otimes x_2 \otimes \cdots \otimes x_p) = &
(-1)^p\epsilon(x_1) \otimes x_2\otimes \cdots \otimes x_p \\
\nonumber
&+
 \sum_{i=1}^{p-1}
(-1)^{p-i}x_1 \otimes \cdots \cdots \otimes (x_{i}\cdot x_{i+1})\otimes \cdots
\otimes x_p \\
\nonumber
&+
x_1 \otimes x_2\otimes \cdots \otimes x_{p-1}\otimes \epsilon(x_p).
\end{align}
The associated simple complex 
$B_{red}(A^{\bullet},\epsilon)=
\oplus_i\bold B_{red}(A^{\bullet},\epsilon)^{-i}[i]$
is called Chen's reduced bar complex of $(A^{\bullet},\epsilon)$.

We define the simplicial bar complex of $A^{\bullet}$.
For a sequence of integers $\alpha=(\alpha_0<\dots <\alpha_n)$, we set
$\bold B_{simp,\alpha}(A^{\bullet},\epsilon)={A^\bullet}^{\otimes n}$.
The length of $\alpha$ is by definition $n$.
We write this tensor product as 
$$
\bold k\overset{\alpha_0}\otimes A\overset{\alpha_1}\otimes
\cdots \overset{\alpha_{n-1}}\otimes A \overset{\alpha_n}\otimes \bold k
$$
to distinguish the index $\alpha$.

For indices $\alpha=(\alpha_0, \dots, \alpha_n)$ and 
$\beta=(\alpha_0,\cdots,\widehat{\alpha_i},\cdots,\alpha_n)$,
we define a homomorphism of complexes
$d_{bar}:\bold B_{simp,\alpha}(A,\epsilon) \to \bold B_{simp,\beta}(A,\epsilon)$
by
$$
d_{bar}:1\overset{\alpha_0}\otimes x_1\overset{\alpha_1}\otimes \cdots 
\overset{\alpha_{n-1}}\otimes
x_n\overset{\alpha_n}\otimes 1 \mapsto
\begin{cases}
(-1)^n\epsilon(x_1)\overset{\alpha_1}\otimes x_2
\cdots 
x_n\overset{\alpha_n}\otimes 1 & (i=0) \\
(-1)^{n-i} 1\overset{\alpha_0}\otimes 
\cdots 
\overset{\alpha_{i-1}}\otimes x_i\cdot x_{i+1}
\overset{\alpha_{i+1}}\otimes \cdots
\overset{\alpha_n}\otimes 1 & (0<i<n) \\
 1\overset{\alpha_0}\otimes x_1 
\cdots 
x_{n-1}
\overset{\alpha_{n-1}}\otimes \epsilon(x_n) & (i=n) \\
\end{cases}
$$
We set
$\bold B^{-n}_{simp}(A,\epsilon)= 
\bigoplus_{\mid \alpha\mid =n}\bold B_{simp,\alpha}(A,\epsilon)$.
Using $d_{bar}$, we have the following double complex:
$$
\cdots \overset{d_{bar}}\to \bold B^{-2}_{simp}(A, \epsilon) 
\overset{d_{bar}}\to \bold B^{-1}_{simp}(A, \epsilon) 
\overset{d_{bar}}\to \bold B^0_{simp}(A, \epsilon)\to 0
$$
The associated simple complex $B_{simp}(A^{\bullet},\epsilon)$
is called the simplicial bar complex of $(A^{\bullet},\epsilon)$.
By Theorem 5.2 of \cite{T}, we have the following proposition.  
\begin{proposition}
Let $\alpha=(\alpha_0< \dots< \alpha_n)$ be a sequence of integers.
The map 
$$
B_{simp,\alpha}\to I^{\otimes n}:
1\otimes x_1\otimes \cdots \otimes x_n\otimes 1 \mapsto
\pi(x_1) \otimes \cdots \otimes \pi(x_n)
$$
defines a homomorphism of double complexes
$\bold B_{simp}(A, \epsilon)\to \bold B_{red}(A, \epsilon)$,
which induces a quasi-isomorphism 
$B_{simp}(A, \epsilon)\to B_{red}(A, \epsilon)$.
\end{proposition}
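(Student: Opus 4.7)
The plan is to verify two assertions: first, that $\pi^{\otimes n}$ commutes with both the inner and the bar differential and so defines a morphism of double complexes, and second, that it induces a quasi-isomorphism on associated simple complexes. Writing the unit as $\eta\colon \bold k\to A^{\bullet}$ and the projection $\pi=\mathrm{id}-\eta\epsilon$ (so $A=\bold k\cdot 1\oplus I$ as a complex), the key computational identity is
\begin{equation*}
\pi(xy)=\epsilon(x)\pi(y)+\epsilon(y)\pi(x)+\pi(x)\pi(y),
\end{equation*}
obtained by expanding $x=\eta\epsilon(x)+\pi(x)$ and $y=\eta\epsilon(y)+\pi(y)$ and taking the $I$-component of the product.

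Compatibility with the inner differential is immediate, since $\epsilon$ (and hence $\pi$) is a morphism of complexes, so $\pi^{\otimes n}$ commutes with the tensor-product differential on $A^{\otimes n}$. For the bar direction I would apply $\pi^{\otimes(n-1)}$ to each face of $d_{bar}$ and expand the middle faces $d_{bar}^i$ ($0<i<n$) using the key identity. The pure-product pieces $\pi(x_i)\pi(x_{i+1})$ reproduce the middle faces of $d_B$ exactly. The remaining degenerate contributions all take the form $\epsilon(x_j)\cdot(\pi(x_1)\otimes\cdots\otimes\pi(x_{j-1})\otimes\pi(x_{j+1})\otimes\cdots\otimes\pi(x_n))$. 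For each interior $j$ this element appears in the expansion of face $j-1$ and face $j$ with opposite signs $(-1)^{n-j+1}$ and $(-1)^{n-j}$; for $j=1$ it is cancelled by the outer face $d_{bar}^0$ (sign $(-1)^n$ against face $1$'s $(-1)^{n-1}$), and symmetrically for $j=n$ against $d_{bar}^n$. Every degenerate term therefore cancels in pairs, leaving exactly $d_B\circ\pi^{\otimes n}$.

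For the quasi-isomorphism, I would filter $\bold B_{simp,\alpha}$ by the number of tensor factors lying in the unit line: let $F^p$ be spanned by pure tensors having at least $p$ factors in $\eta(\bold k)\subset A$. Then $F^0/F^1\cong I^{\otimes n}=\bold B_{red}^{-n}$, and $\pi^{\otimes n}$ realizes precisely this projection. It therefore suffices to show that the subcomplex $F^1\bold B_{simp}$ is acyclic on the simple complex. This is the standard ``extra degeneracy'' argument for augmented bar complexes: whenever a tensor factor equals $1$, the adjacent face maps $d_{bar}^{j-1}$ and $d_{bar}^j$ simply delete it (using $1\cdot x=x=x\cdot 1$), so inserting $1$ at a fixed position defines a contracting homotopy for $F^1$.

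The main obstacle is disentangling the sign conventions in the first step: the signs $(-1)^n,(-1)^{n-i}$ in $d_{bar}$ interact with the simple-complex sign $e^{-i}$ and with the inner grading of $A^{\bullet}$, so the bookkeeping must be carried out simultaneously along all three directions to ensure that the degenerate contributions really do cancel on the nose. A streamlined alternative is to invoke Theorem 5.2 of \cite{T} directly, which handles this comparison in precisely the setting required here.
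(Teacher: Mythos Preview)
The paper does not give a proof of this proposition at all: it simply invokes Theorem~5.2 of \cite{T}. Your final sentence therefore \emph{is} the paper's proof. Your direct argument is a genuine addition, and the chain-map half is fine: the identity $\pi(xy)=\epsilon(x)\pi(y)+\epsilon(y)\pi(x)+\pi(x)\pi(y)$ together with the telescoping cancellation you describe does establish compatibility with $d_{bar}$, and compatibility with the inner differential is clear.

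The quasi-isomorphism half, however, has a gap that is not merely bookkeeping. Your filtration $F^{p}$ lives on a single $\bold B_{simp,\alpha}=A^{\otimes n}$, but $\bold B_{simp}^{-n}=\bigoplus_{|\alpha|=n}A^{\otimes n}$ is a direct sum over \emph{all} increasing integer sequences $\alpha$ of length $n+1$. Two things go wrong. First, $F^{0}/F^{1}\cong\bigoplus_{|\alpha|=n}I^{\otimes n}$, not $I^{\otimes n}$, so the projection $\pi^{\otimes n}$ is not the quotient by $F^{1}$; the kernel of $\bold B_{simp}^{-n}\to I^{\otimes n}$ is strictly larger than $\bigoplus_{\alpha}F^{1}$ (for instance $(z)_{\alpha}-(z)_{\alpha'}$ for $z\in I^{\otimes n}$ and $\alpha\neq\alpha'$ lies in the kernel but not in $F^1$). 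Second, $F^{1}$ is not even a subcomplex of $\bold B_{simp}$: if $x_{j}=1$ then the faces $d_{bar}^{j-1}$ and $d_{bar}^{j}$ both produce $x_{1}\otimes\cdots\otimes x_{j-1}\otimes x_{j+1}\otimes\cdots\otimes x_{n}$, with opposite signs, but they land in the \emph{distinct} summands $D_{j-1}(\alpha)$ and $D_{j}(\alpha)$, so they do not cancel inside $\bold B_{simp}$ and the result need not lie in $F^{1}$. Your ``insert $1$ at a fixed position'' homotopy is the right idea for comparing the unreduced bar complex $\bigoplus A^{\otimes n}[n]$ with $B_{red}$, but it does not by itself collapse the $\alpha$-indexing; that collapse is a separate (simplicial contractibility) step that your sketch omits. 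This is precisely the content packaged into Theorem~5.2 of \cite{T}, so the cleanest repair is the one you already name: cite it.
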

We introduce a differential graded coalgebra structure on the double complex
$\bold B_{red}(A^{\bullet},\epsilon)$: 
$$
\mu:\bold B_{red}(A^{\bullet},\epsilon) \to
\bold B_{red}(A^{\bullet},\epsilon)\otimes
\bold B_{red}(A^{\bullet},\epsilon)
$$
defined by 
$$
\mu(x_1\otimes \cdots \otimes x_n)=\sum_{i=0}^n
(x_1\otimes \cdots \otimes x_i)\otimes 
(x_{i+1}\otimes \cdots \otimes x_n).
$$

The homomorphism $\mu$ defines a comultiplication  
on the associated simple complex
$B_{red}(A^{\bullet},\epsilon)$ and its cohomology
$H^0(B_{red}(A^{\bullet},\epsilon))$:
\begin{equation}
\label{comult on h0}
\mu:H^0(B_{red}(A^{\bullet},\epsilon))\to
H^0(B_{red}(A^{\bullet},\epsilon))\otimes 
H^0(B_{red}(A^{\bullet},\epsilon))
\end{equation}
Via this comultiplication, $H^0(B_{red}(A^{\bullet},\epsilon))$
becomes a coalgebra over $\bold k$.

We define nilpotent $A^{\bullet}$-connections for a connected
DGA.
\begin{definition}
\label{def of nilp conn}
Let $A^{\bullet}$ be a DGA over a field $\bold k$.
\begin{enumerate}
\item
$A^{\bullet}$ is said to be connected if $H^i(A^{\bullet})=0$ for $i<0$
and $H^{0}(A^{\bullet})=\bold k$.
(This condition is sometimes called the cohomological connectedness.)
\item
\label{connction finite dimensional}
Let $M$ be a finite dimensional $\bold k$ vector space. 
A map $\nabla:M \to A^1\otimes M$
is called a nilpotent $A^{\bullet}$-connection 
if there exists a finite decreasing filtration $\{F^iM\}_i$
such that $\nabla (F^iM)\subset A^1\otimes F^{i+1}$
and $F^pM=0$, $F^qM=M$ for some $p,q\in \bold Z$.
A nilpotent $A^{\bullet}$-connection is said to be integrable if
the following diagram is commutative:
$$
\begin{matrix}
M &\overset{\nabla}\to &A^1\otimes M \\
\nabla\downarrow & & \downarrow d\otimes 1_M \\
A^1\otimes M &\overset{(\mu\otimes 1)\circ (1_A\otimes \nabla)}\to
& A^2\otimes M.
\end{matrix}
$$
A pair $(M,\nabla)$
of a vector space $M$ and an integrable nilpotent 
$A^{\bullet}$-connection $\nabla$ on $M$ 
is called an integrable nilpotent $A^{\bullet}$-connection for short.
\item
Let $\Cal M_1=(M_1,\nabla_1)$ and $\Cal M_2=(M_2,\nabla_2)$ be 
integrable nilpotent connections. A $\bold k$ linear map
$\varphi:M_1\to A^0\otimes M_2$ is called a homomorphism of 
integrable nilpotent connections
from $\Cal M_1$ to $\Cal M_2$ if the following diagram commutes:
$$
\begin{matrix}
M_1 & \overset{\nabla_1}\to & A^1\otimes M_1 &
\overset{1_A\otimes \varphi}\to & A^1\otimes A^0 \otimes M_2 \\
\varphi\downarrow & & & &\downarrow\cup\otimes 1_{M_2} \\
A^0\otimes M_2 & &
\overset{(\cup\otimes 1_{M_2})(1_A\otimes \nabla_2)+d\otimes 1_{M_2}} 
\longrightarrow
  & & A^1\otimes M_2. \\
\end{matrix}
$$
\item
Let $\Cal M_1=(M_1,\nabla_1)$ and $\Cal M_2=(M_2,\nabla_2)$ be 
two integral nilpotent $A^{\bullet}$-connections.
Let $\varphi_0$ and $\varphi_1$ be two homomorphisms from
$\Cal M_1$ to $\Cal M_2$. 
The homomorphisms $\varphi_0$ and $\varphi_1$ are homotopy equivalent
if 
there exists a $\bold k$ linear map $h:M_1 \to A^{-1}\otimes M_2$ such that
the difference $\varphi_1-\varphi_0$ is equal to the
sum of two maps
\begin{align*}
& M_1\overset{h}\to A^{-1}\otimes M_2
\overset{(\cup \otimes 1_{M_2})(1\otimes \nabla_2)+d\otimes 1_{M_2}}\longrightarrow
A^0\otimes M_2\\
& M_1\overset{\nabla_1}\to A^1\otimes M_1 \overset{1\otimes h}
\to A^1\otimes A^{-1}\otimes M_2 \to A^0\otimes M_2.
\end{align*}
A $\bold k$ linear map $h$ satisfying the above condition is called a 
homotopy of $\varphi_0$ and $\varphi_1$.
\end{enumerate}
\end{definition}
The category of integrable nilpotent connections is denoted as
$(INC_A)$. It is easy to see that the set of null homotopic 
morphisms in $(INC)$ forms an ideal in the set of morphisms
and we get a quotient category by replacing the morphism
by their homotopy equivalence classes. This quotient category
is denoted by $(HINC_A)$.
We refer the following theorem in \cite{T}. 
\begin{theorem}[Theorem 7.6 in \cite{T}]
Let $A^{\bullet}$ be a connected DGA.
The category $(HINC_A)$ is equivalent to the category of 
$H^0(B(A^{\bullet},\epsilon))$-comodules.
\end{theorem}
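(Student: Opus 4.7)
The plan is to build quasi-inverse functors between $(HINC_A)$ and the category of (finite-dimensional) $H^0(B_{red}(A^\bullet,\epsilon))$-comodules. For the forward functor $\Phi$ on objects, take $(M,\nabla)\in(INC_A)$ and iterate $\nabla$ to define $\nabla^{(n)}:M\to (A^1)^{\otimes n}\otimes M$ by $\nabla^{(1)}=\nabla$ and $\nabla^{(n)}=(1^{\otimes(n-1)}\otimes\nabla)\circ\nabla^{(n-1)}$. The nilpotent filtration $F^\bullet M$ forces $\nabla^{(n)}=0$ for $n$ exceeding the length of $F^\bullet$, so $T(m)=\sum_{n\geq 0}\nabla^{(n)}(m)$ is a finite sum lying in total bidegree zero of $\bold B_{red}(A^\bullet,\epsilon)\otimes M$. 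Since $A^1\subset I$, the $\epsilon$-boundary terms in (\ref{definition of reduced bar differential}) vanish, and a direct calculation shows that the integrability identity $(d\otimes 1)\nabla=(\mu\otimes 1)(1\otimes\nabla)\nabla$, assembled across all lengths, is equivalent to the cocycle equation $(D+d_B)T(m)=0$. Thus $T$ factors through $H^0(B_{red}(A^\bullet,\epsilon))\otimes M$, and applying the comultiplication of (\ref{comult on h0}) term by term yields $\mu\circ T=(T\otimes 1)\circ T$, which is the comodule axiom.

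For the action on morphisms, a homomorphism $\varphi:M_1\to A^0\otimes M_2$ bundles with the iterated connections into a cochain $T_\varphi:M_1\to\bold B_{red}(A^\bullet,\epsilon)\otimes M_2$ of total degree zero, whose length-$n$ component inserts $\varphi$ in each of the $n+1$ available slots among iterated factors of $\nabla_1$ and $\nabla_2$. The diagram in Definition \ref{def of nilp conn}(3) becomes, after summation, the cocycle equation for $T_\varphi$, so $\Phi(\varphi)$ is a well-defined comodule map. The analogous interleaved sum applied to a homotopy $h:M_1\to A^{-1}\otimes M_2$ furnishes an explicit chain homotopy in $\bold B_{red}(A^\bullet,\epsilon)\otimes M_2$ between $T_{\varphi_0}$ and $T_{\varphi_1}$, so $\Phi$ descends to the quotient category $(HINC_A)$.

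The inverse functor $\Psi$ sends a comodule $\rho:M\to H^0(B_{red}(A^\bullet,\epsilon))\otimes M$ to the integrable nilpotent connection whose length-one piece is extracted from a chain-level lift $\tilde\rho:M\to(\bold B_{red}(A^\bullet,\epsilon)\otimes M)^0$ of $\rho$. The length-two component of the cocycle equation for $\tilde\rho$ then recovers integrability, and the nilpotent filtration $F^\bullet M$ is supplied by the coradical filtration of $H^0(B_{red}(A^\bullet,\epsilon))$, which is conilpotent since the bar complex is exhausted by the length filtration $L^p=\bigoplus_{n\geq p}I^{\otimes n}[n]$. The main obstacle is precisely the cochain-level lift $\rho\mapsto\tilde\rho$: the obstructions live in cohomology groups that can be analyzed via $L^\bullet$, and cohomological connectedness of $A^\bullet$ (which gives $H^{<0}(I)=0$ and, by K\"unneth, $H^{<0}(\mathrm{gr}^p_L)=0$) is exactly what allows a downward induction on $p$ to kill them. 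Once $\Psi$ is constructed, verifying $\Phi\circ\Psi\cong\mathrm{id}$ and $\Psi\circ\Phi\cong\mathrm{id}$ reduces to comparing the explicit iterated formulas term by term, which is bookkeeping rather than substance.
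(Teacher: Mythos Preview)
The paper does not prove this theorem; it merely quotes it as Theorem~7.6 of \cite{T} and uses it as a black box. So there is no ``paper's own proof'' to compare against. I can only comment on the internal coherence of your sketch and on how the paper itself later handles the one genuinely delicate point.

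Your forward functor $\Phi$ is the standard one and is correct: iterating $\nabla$ produces a finite sum $T(m)\in\bigoplus_n (A^1)^{\otimes n}\otimes M\subset Z^0(B_{red})\otimes M$, and the Maurer--Cartan form of integrability is exactly the cocycle equation. Projecting to $H^0$ gives the comodule map, and your treatment of morphisms and homotopies is fine.

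The gap is in the inverse functor $\Psi$. Given a comodule $\rho:M\to H^0(B_{red})\otimes M$, you choose an arbitrary cocycle lift $\tilde\rho=\sum_n\tilde\rho_n$ and declare the connection to be $\omega=\tilde\rho_1\in A^1\otimes M$. The length-one piece of the cocycle equation then reads $d\omega+\mu(\tilde\rho_2)=0$, but this is \emph{not} the integrability condition $d\omega+\omega\cdot\omega=0$ unless you know that $\tilde\rho_2=\omega\otimes\omega$. The comodule axiom only forces this equality in $H^0(B_{red}\otimes B_{red})$, i.e.\ up to a coboundary, and for a general connected DGA the degree-zero coboundaries in $I^{\otimes 2}[2]$ are nonzero (they come from $I^0\otimes I^1\oplus I^1\otimes I^0$). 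Your appeal to an obstruction-theoretic induction on the length filtration is pointing in the right direction but, as written, does not close this gap: you have not explained why the lift $\tilde\rho$ can be adjusted so that $\tilde\rho_2$ is literally $\omega\otimes\omega$, nor why different choices of lift yield homotopy-equivalent connections.

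The clean way to fix this---and the device the paper itself uses a few lines later when constructing the universal connection---is to first replace $A^\bullet$ by the quasi-isomorphic sub-DGA $A'=\bold k\oplus A'^1\oplus\bigoplus_{i\ge 2}A^i$ with $A'^1\oplus dA^0=A^1$. Then $I'^{\le 0}=0$, so $B_{red}(A',\epsilon)^{<0}=0$, whence $H^0(B_{red}(A'))=Z^0(B_{red}(A'))\subset B_{red}(A')^0=\bigoplus_n(A'^1)^{\otimes n}$. In this model the lift $\tilde\rho$ is unique, the comodule axiom literally forces $\tilde\rho_n=\omega^{\otimes n}$, and integrability drops out of the cocycle equation at length one. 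After that, the verification that $\Phi$ and $\Psi$ are quasi-inverse really is bookkeeping.
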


\subsection{Cech complexes}
We recall Cech complexes for a coverings and surjecitve
morphisms.

\subsubsection{}
Let $I$ be a finite ordered set and $\Cal P(I)$ (resp. $\Cal P^+(I)$) be 
the set of non-empty subsets (resp. subsets) of $I$. 
Then $\Cal P(I)$ and $\Cal P^+(I)$ become categories by
inclusions. For a category $\Cal C$, a covariant functor
from $\Cal P(I)$ (resp. $\Cal P^+(I)$ to $\Cal C$ is called a 
(resp. an augmented strict cosimplicial object)
strict cosimplicial object
of $\Cal C$ indexed by $I$.

For an element $\bold i=(i_0<\dots< i_p)\in \Cal P(I)$,
we define $\mid \bold i\mid =p$.
We set
$D_k(\bold i)=(i_0,\dots, \widehat{i_k},\dots, i_p)$ for
$\bold i=(i_0, \dots, i_p)$.
We denote
$\bold i_1*\bold i_2\vdash\bold i$ if
$\bold i_1=(i_0, \dots, i_k), \bold i_2=(i_k, \dots, i_p)$ and
$\bold i=(i_0,\dots, i_k,\dots, i_p)$.
Let $\Cal C$ be an abelian category and 
$\{A_J\}_{J\in \Cal P(I)}$ a strict cosimplicial object of $\Cal C$.
Then we define the Cech complex $\check{\bold C}(I,A)$ by
$$
0\to \prod_{\mid J\mid=0}A_J \to \prod_{\mid J\mid=1}A_J \to \cdots,
$$
where the differential $d$ is given by the formula
$$
(da)_{\bold i}=\sum_{k=0}^p(-1)^k a_{D_k(\bold i)}.
$$
\subsubsection{}
Let $I_0, \dots, I_m$ be finite non-empty ordered sets. 
The sequence $(I_0, \dots, I_m)$ is denoted as $\bold I$.
The covariant
functor $A$ from $\Cal P(I_1)\times \cdots \times\Cal P(I_m)$
to a category $\Cal C$ is called a strict polycosimplicial object
in $\Cal C$ indexed by $\bold I$. 
For a strict polycosimplicial object in an abelian category
$\Cal C$, we define Cech complex 
$\check{\bold C}(\bold I,A)\in K\Cal C$ of $A$ by
$$
0\to \prod_{\mid \bold i_0\mid +\cdots+ \mid \bold i_m\mid=0}
A_{\bold i_0, \dots, \bold i_m} \to
\prod_{\mid \bold i_0\mid +\cdots+ \mid \bold i_m\mid=1}
A_{\bold i_0, \dots, \bold i_m} \to
$$
Here the differential $d$ is defined by
$$
(da)_{\bold i_1,\dots, \bold i_m}=
\sum_{0\leq l\leq m, 0<\mid \bold i_l\mid}
\sum_{k=0}^{\mid i_l\mid}(-1)^
{\mid i_0\mid+\cdots \mid i_{l-1}\mid +l+k} 
a_{\bold i_0, \dots, D_k(\bold i_l),\dots, \bold i_m}.
$$

\subsubsection{}
Let $\Delta$ be the category of simplexes and $\Cal C$ be 
an abelian category.
A covariant functor from $\Delta$ to $\Cal C$ is called a
cosimplicial object in $\Cal C$. If $\Cal C$ is an abelian category,
then for a cosimplicial object $\{A^i\}_{i\geq 0}$, we define
the associated total complex $\bold t(A)$ as an object of
$K\Cal C$ by
$$
0\to A^0 \to A^1 \to \cdots.
$$
\begin{definition}[Alexander Whitney products]
\label{A-W prodct}
\begin{enumerate}
\item
\label{A-W prod strict cosimplical}
Let $A$ be a strict cosimplicial DGA indexed by $I$. 
We define the pruduct $\mu$ 
$$
\mu:\check{\bold C}(I,A)\otimes \check{\bold C}(I,A)\to 
\check{\bold C}(I,A)
$$
by 
\begin{equation}
\label{Alexander-Whitney product primitive}
\mu(a\otimes b)_{\bold i}=
\sum_{\bold i_1*\bold i_2\vdash\bold i}
 a_{\bold i_1}\cdot b_{\bold i_2}
\end{equation}
The associated simple complex $s(\check{\bold C}(I,A))$ of the Cech complex 
of $A$ is denoted as $\check{C}(I,A)$.
Then $\check{C}(I,A)$ becomes an associative DGA by the multiplication $\mu$. 
\item
\label{polysimp}
Let $A$ be a strict polycosimplicial DGA indexed by 
$\bold I=(I_0, \dots,I_m)$. 
We define the product $\mu$ 
$$
\check{\bold C}(\bold I,A)\otimes  \check{\bold C}(\bold I,A) 
\to \check{\bold C}(\bold I,A)
$$
by
$$
\mu(a\otimes b)_{\bold i_0, \dots, \bold i_m}=
\sum_{\bold j_0*\bold k_0\vdash \bold i_0,\dots, 
      \bold j_m*\bold k_m\vdash \bold i_m}
a_{\bold j_0,\dots, \bold j_m}\cdot
a_{\bold k_0,\dots, \bold k_m}
$$
the associated simple complex $\check{C}(\bold I,A)$ 
becomes an associative DGA. 
\item
\label{A-W prod simplical}
Let $\{A^{i}\}_i$ be a cosimplicial DGA. We define the product
$$
\mu:A^l\otimes A^m \to A^{l+m},
$$
by 
$$
\mu(a\otimes b)=i_{l+m,l,*}(a)\cdot j_{l+m,m,*}(b),
$$
where $i_{l+m,l,*}$ and $j_{l+m,m,*}$ are morphism of DGA's
induced by 
\begin{align*}
&i_{l+m,l}:[l]\to [l+m]:s\mapsto s \text{ for } s\in [l] \\
&j_{l+m,m}:[m]\to [l+m]:s\mapsto s+l \text{ for } s\in [m].
\end{align*}
By this product, 
the associated simple complex of $\bold t(A^{\bullet})$ 
becomes an associative DGA. It is denoted by $t(A^{\bullet})$.
\end{enumerate}
The products defined as above are called Alexander-Whitney products.
\end{definition}

\subsubsection{}
We set $I=I_0\coprod \cdots \coprod I_m$ and introduce a total order
on $I$. 
For simplicity, we set $I_0=[0,i_1-1],I_1=[i_1,i_2-1],\cdots
,I_m=[i_m,k]$ and $I=[0,k]$.
For a non-empty subset 
$\bold j=(j_0, \dots,j_l)\in \Cal P(J)$ of $J=[m]$,
we set $I_{\bold j}=I_{j_0}\coprod\cdots \coprod I_{j_l}$
and $\bold I_{\bold j}=(I_{j_0},\cdots,I_{j_l})$.
Then
we have an inclusion $\Cal P(I_{j_0})\times \cdots \times \Cal P(I_{j_l})
\to \Cal P(I)$ obtained by taking the union of subsets of
$I_{j_0},\dots, I_{j_l}$.
Let $A$ be a strict cosimplicial object indexed by $I$ in $\Cal C$. 
By restricting the functor $A$ to 
$\Cal P(I_{j_0})\times \cdots \times \Cal P(I_{j_{l}})$, we have a strict 
polycosimplicial object $A_{\bold j}$ indexed by $\bold I_{\bold j}$.
Thus we have a Cech complex 
$\check{\bold C}(\bold I_{\bold j},A_{\bold j})\in K\Cal C$.
Let $\bold j=(j_0,\dots, j_l)\in \Cal P(J)$ and $0\leq q\leq l$ and set
$D_q(\bold j)=(j_0,\dots, \widehat{j_q},\dots, j_l)$.
We define a 
homomorphism 
$\sigma_{\bold j,q}:\check{\bold C}(\bold I_{D_q(\bold j)},A_{D_q(\bold j)})\to 
\check{\bold C}(\bold I_{\bold j},A_{\bold j})$
in $K\Cal C$ by
$$
\sigma(a)_{\bold i_{j_0},\dots, \bold i_{j_l}}=\begin{cases}
(-1)^{p-i_{j_q}} 
a_{\bold i_{j_0},\dots,\widehat{\bold i_{j_q}} ,\dots, \bold i_{j_l}}
&\text{ if }\bold i_{j_q}=\{p\}, p\in I_{j_q} \\
0 &\text{ otherwise }.
\end{cases}
$$
As a consequence, the system 
$\{\check{\bold C}(\bold I_\bold j,A_{\bold j})\}_{\bold j}$ 
forms a strict cosimplicial object in $K\Cal C$ indexed
by $J=[m]$. It is denoted as $\check{\bold C}(I/J, A)$ and called the 
relative Cech complex for $I/J$. 
Thus we have the Cech complex 
$\check{\bold C}(J,\check{\bold C}(I/J,A))$ in $KK\Cal C$.
The assocated simple complex in $K\Cal C$ is denoted as
$\check{C}(J,\check{\bold C}(I/J,A))$.

\begin{proposition}
The complex
$\check{C}(J,\check{\bold C}(I/J,A))$ in $K\Cal C$ is isomorphic to
$\check{\bold C}(I,A)$.
\end{proposition}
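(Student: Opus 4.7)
My plan is to construct the isomorphism from an underlying combinatorial bijection on index sets and then verify that it intertwines the two differentials. The combinatorial input is the block decomposition: any non-empty $\bold i \in \Cal P(I)$ is uniquely a disjoint union
$$\bold i = \bold i_{j_0}\sqcup\cdots\sqcup\bold i_{j_l},$$
where $\bold j=(j_0<\cdots<j_l)\in\Cal P(J)$ lists the blocks of $I$ that $\bold i$ meets and $\bold i_{j_q}:=\bold i\cap I_{j_q}$ is a non-empty subset of $I_{j_q}$. Since $A$ is a strict cosimplicial functor on $\Cal P(I)$, we have $A_{\bold i} = A_{\bold i_{j_0},\dots,\bold i_{j_l}}$, and this object sits in the bicomplex $\check{\bold C}(J,\check{\bold C}(I/J,A))$ at inner degree $\sum_q |\bold i_{j_q}|$ and outer degree $l$, hence in total degree $l+\sum_q|\bold i_{j_q}| = |\bold i|$. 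Summing over $\bold i$ gives a degreewise bijection of summands, and hence an isomorphism of the underlying graded objects of $\check{\bold C}(I,A)$ and $\check{C}(J,\check{\bold C}(I/J,A))$.

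Next I would match the differentials. The Cech differential on $\check{\bold C}(I,A)$ acts on a given $\bold i$ by removing each element in turn with sign $(-1)^k$, where $k$ is the absolute position in $\bold i$. On the bicomplex side the total differential splits into two contributions that between them cover all such single-element removals: the inner polycosimplicial differential removes an element from some $\bold i_{j_q}$ provided $|\bold i_{j_q}|>0$ so that the block survives; while the outer $J$-Cech differential, assembled from the maps $\sigma_{\bold j,q}$, handles the complementary case in which $\bold i_{j_q}$ is a singleton, so that removing its unique element eliminates the block $j_q$ and replaces $\bold j$ by $D_q(\bold j)$. Matching of terms under the bijection above is then immediate from the block decomposition.

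The main obstacle is sign bookkeeping. In the inner case the polycosimplicial sign $(-1)^{|\bold i_{j_0}|+\cdots+|\bold i_{j_{q-1}}|+q+k'}$ is by design the absolute position in $\bold i$ of the $k'$-th element of $\bold i_{j_q}$, since each earlier block contributes $|\bold i_{j_r}|+1$ positions to the global ordering on $\bold i$. In the outer case one must combine the defining prefactor $(-1)^{p-i_{j_q}}$ of $\sigma_{\bold j,q}$, the alternating sign $(-1)^q$ of the $J$-Cech differential, and the Koszul sign incurred when the outer differential is transferred to the associated simple complex, and check that the product again equals the absolute-position sign in $\bold i$ of the element being removed. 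Once this sign identity is carried out, the graded bijection of the first step is upgraded to a chain isomorphism, and the proposition follows.
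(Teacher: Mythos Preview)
The paper states this proposition without proof, so there is nothing to compare your argument against; your block-decomposition approach is the natural one and is what any proof must do. The bijection $\bold i \leftrightarrow (\bold j;\bold i_{j_0},\dots,\bold i_{j_l})$ is correct, the degree count $|\bold i|=l+\sum_q|\bold i_{j_q}|$ is correct, and your verification of the inner sign --- that the polycosimplicial exponent $|\bold i_{j_0}|+\cdots+|\bold i_{j_{q-1}}|+q+k'$ equals the absolute position in $\bold i$ of the element removed --- is right on the nose.

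The only substantive gap is the outer sign check, which you defer with the expectation that the three signs (the factor $(-1)^{p-i_{j_q}}$ from $\sigma_{\bold j,q}$, the $J$-Cech sign $(-1)^q$, and the Koszul sign from passing to the simple complex) will combine to the absolute-position sign. With the paper's conventions as written this is \emph{not} literally the case: already for $I=\{0,1,2\}$, $I_0=\{0\}$, $I_1=\{1,2\}$, the term $a_{12}$ appears in $(da)_{012}$ with sign $+1$ on the $\check{\bold C}(I,A)$ side, but on the double-complex side one gets $(-1)^{p-i_{j_0}}\cdot(-1)^0\cdot(-1)^{|\{1,2\}|}=-1$. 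So the chain isomorphism cannot be the identity on summands; it must be the block-decomposition bijection twisted by an explicit sign $\epsilon(\bold i)$ (or, equivalently, the sign in the definition of $\sigma_{\bold j,q}$ has to be adjusted). Producing that twist and checking it against both the inner and outer differentials is the actual computation hidden behind the phrase ``once this sign identity is carried out,'' and it is more delicate than your outline suggests. The strategy is sound; the execution of the outer case still needs to be written down.
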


Let $A$ be a strict cosimplicial DGA indexed by $I$.
For an element $\bold j \in \Cal P(J)$,
$\check{C}(\bold I_\bold j,A_\bold j)$ 
is a DGA under the multiplication defined in
Definition \ref{A-W prodct}(\ref{polysimp})
and this correspondece gives rise
to a strict cosimplical DGA indexed by $J$ which is denoted as
$\check{C}(I/J,A)$.

\begin{proposition}
The DGA $\check{C}(J,\check{C}(I/J,A))$ is isomorphic
to $\check{C}(I,A)$.
\end{proposition}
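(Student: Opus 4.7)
The plan is to upgrade the isomorphism of complexes from the preceding proposition to an isomorphism of DGA's. First I would make the underlying bijection of indexing data explicit. Every $\bold i \in \mathcal{P}(I)$ decomposes uniquely as $\bold i = \bold i_{j_0} \sqcup \cdots \sqcup \bold i_{j_l}$ according to which blocks $I_k$ it meets, where $\bold j(\bold i) = (j_0 < \cdots < j_l) \in \mathcal{P}(J)$ is the list of relevant indices and $\bold i_{j_k} = \bold i \cap I_{j_k} \in \mathcal{P}(I_{j_k})$. A direct count gives $|\bold i| = |\bold j(\bold i)| + \sum_k |\bold i_{j_k}|$, so this gives a degree-preserving bijection between the indexing data of $\check{C}(I,A)$ and of $\check{C}(J,\check{C}(I/J,A))$, under which $A_{\bold i}$ and $A_{\bold i_{j_0} \cup \cdots \cup \bold i_{j_l}}$ are the same module. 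This is precisely the bijection underlying the isomorphism $\phi$ of complexes from the preceding proposition.

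Next I would match the two multiplications summand by summand. A splitting $\bold i = \bold i_1 * \bold i_2$ in the AW product on $\check{C}(I,A)$ is determined by its shared vertex $i_s \in \bold i$, which lies in a unique block $I_{j_m}$. This induces an outer splitting $\bold j(\bold i) = \bold j_1 * \bold j_2$ at $j_m$ with $\bold j_1 = (j_0,\dots,j_m)$ and $\bold j_2 = (j_m,\dots,j_l)$, together with an inner polysimplicial splitting which is trivial at every block $I_{j_k}$ with $k \neq m$ (only one side sees $\bold i_{j_k}$) and equals the splitting of $\bold i_{j_m}$ at $i_s$ on the block $I_{j_m}$. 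Conversely, such combinatorial data reconstruct the splitting $\bold i_1 * \bold i_2$ uniquely. This bijection matches summands of the one-step AW product $\mu(a\otimes b)_{\bold i}$ on $\check{C}(I,A)$ with summands of the iterated AW product on $\check{C}(J,\check{C}(I/J,A))$ at position $\bold j(\bold i)$; for each pair of corresponding summands, the element of $A_{\bold i}$ obtained after pushing forward via the cosimplicial structure maps and multiplying in the DGA $A_{\bold i}$ is the same on both sides.

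The main obstacle will be sign bookkeeping. Both sides carry Koszul signs coming from the passage from the (poly)double complex to its associated simple complex, and the iterated construction on the $J$-side accumulates signs at the outer $J$-level, at the inner polysimplicial level (as in Definition \ref{A-W prodct}(\ref{polysimp})), at the cosimplicial structure maps $\sigma_{\bold j,q}$, and at the switching isomorphism $\nu$ of the Convention section which identifies $s(\check{\bold C}(J,\check{\bold C}(I/J,A)))$ with $s(\check{\bold C}(I,A))$. I would handle this by exploiting that the inner polysimplicial splitting is trivial at every block except $I_{j_m}$, so its sign collapses to the one-step sign of the split of $\bold i_{j_m}$ at $i_s$; the outer $J$-sign for $\bold j = \bold j_1 * \bold j_2$ together with the Koszul sign from $\nu$, which permutes the degree shifts across blocks, then combines to reproduce exactly the sign appearing in the one-step AW product at the vertex $i_s \in \bold i$. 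Once this sign identity is verified, compatibility with the differential and unit is automatic from the preceding proposition, and we obtain the claimed isomorphism of DGA's.
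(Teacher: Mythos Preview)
The paper states this proposition without proof, as it also does for the preceding proposition on the level of complexes; both are treated as routine. Your approach is the natural one and is correct in outline. The bijection you describe between splittings $\bold i_1*\bold i_2\vdash\bold i$ at a vertex $i_s\in I_{j_m}$ and pairs consisting of an outer split $\bold j_1*\bold j_2\vdash\bold j(\bold i)$ at $j_m$ together with an inner polysimplicial split is exactly right, and the key observation that the structure maps $\sigma_{\bold j,q}$ vanish unless the added block carries a singleton is precisely what forces the inner splits at blocks $k\neq m$ to be extremal (one side the full $\bold i_{j_k}$, the other its terminal or initial vertex), so that only the split of $\bold i_{j_m}$ is free. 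After pushing forward by the cosimplicial structure maps, both sides land in $A_{\bold i}$ and the underlying ring-theoretic product agrees.

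One small point worth making explicit in your sign analysis: the structure maps $\sigma_{\bold j,q}$ themselves carry the sign $(-1)^{p-i_{j_q}}$ depending on \emph{which} singleton $\{p\}\subset I_{j_q}$ is hit, and iterating $\sigma$ over the blocks $j_{m+1},\dots,j_l$ (for the $a$-factor) and $j_0,\dots,j_{m-1}$ (for the $b$-factor) produces a product of such signs. These must be combined with the Koszul sign from $\nu$ and the outer Alexander--Whitney sign to recover the single-step sign; your plan to exploit the triviality of the inner splits away from $j_m$ is the right organizing principle, but the actual cancellation involves all three sources and is worth writing out in at least one representative case.
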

Let $I$ and $J$ be finite ordered sets and $\varphi:I\to J$ 
be a non-decreasing map. Let $\Cal C$ be an abelian category and
$A$ and $B$ be strict cosimplicial objects in $\Cal C$ indexed by
$I$ and $J$, respectively.
A $\varphi$ morphism from $B$ to $A$ is a set of morphisms
$\tau_{\bold i,\bold j}:B_{\bold j}\to A_{\bold i}$ for 
$\bold i\in \Cal P(I), \bold j\in \Cal P(J),\varphi(\bold i)\subset
\bold j$ 
such that the following diagram commutes
$$
\begin{matrix}
B_{\bold j}&\to & A_{\bold i} \\
\downarrow & & \downarrow\\
B_{\bold j'}&\to & A_{\bold i'} 
\end{matrix}
$$
for all $\bold i\subset\bold i'\in \Cal P(I),
\bold j\subset\bold j'\in \Cal P(J)$
such that $\varphi(\bold i)\subset \bold j,\varphi(\bold i')\subset \bold j'$.
For a $\varphi$ morphism from $B$ to $A$, we define a morphism of complexes
$$
\varphi_*:\check{C}^p(B)=\prod_{\mid\bold j\mid=p}B_\bold j\to 
\check{C}^p(A)=\prod_{\mid \bold i\mid=p}A_\bold i
$$ 
by
$$
\varphi_*(a)_{i_0,\dots,i_p}=
\begin{cases}
\tau(a_{\varphi(i_0),\dots,\varphi(i_p)}) &\text{ if }
\varphi(i_0)<\dots<\varphi(i_p)
\\
0&\text{ otherwise}.
\end{cases}
$$

\section{Bar complex of Artin-Schreier DGA for $\bold F_p$-scheme}
\label{AS DGA}

\subsection{Cech complex of Artin-Schreier DGA}
Let $X=Spec(A)$ be an irreducible affine $\bold F_p$-scheme of finite
type with an $\bold F_p$-valued
point $x$ of $X$. 
By the diagram (\ref{frb-id rel daig sch})
in Proposition \ref{restatement of Katz}, we have
a diagram (\ref{DGA relation}), 
since the commutative algebra $R$ is a DGA over $\bold F_p$.
The total complex 
$AS(A)=(A\overset{F_A-id_A}\to A)$ 
has a strucuture of DGA by the last papragraph
and is called the Artin-Schreier DGA of $A$.
The $\bold F_p$-valued point $x$ defines a augmentation $\epsilon$
of $AS(A)$.
For a scheme $U$, by attatching
$AS(\Gamma(U,\Cal O_U))$, we get a presheaf of complex
on $(Sch/\bold F_p)$ which is denoted as ${\Cal AS}$. 

Let $\Cal W=\{W_i\to X\}_{i \in I}$ be a family of $X$-schemes indexed by a
totally ordered set $I$.  
We set $W_{J}=W_{j_0}\times_X\cdots \times_X W_{j_n}$ for a subset
$J=(j_0,\dots,j_n)$ of $I$. 
Then by attatching $AS(W_J)$ to an element $J\in \Cal P(I)$,
we get a strict cosimplicial DGA indexed by $I$, 
which is denoted as $AS(\Cal W)$.
Then the Cech complex $\check{C}(I,AS(\Cal W))$ defined in
Definition \ref{A-W prodct}
(\ref{A-W prod strict cosimplical})
is a DGA.

Let $U\to X$ be a morphism of $\bold F_p$-scheme. Let $U_n$ be the 
$n+1$ times fiber product $U\times_X\cdots \times_X U$ of $U$
over $X$. For a presheaf $\Cal A$ of DGA on $X_{et}$,
we have a cosimplicial DGA $\{\Cal A(U_n)\}_n$, which
is denoted as $\Cal A(U/X)$. By 
Definition \ref{A-W prodct}
(\ref{A-W prod simplical}), we have a DGA
$t(\Cal A(U/X))$. 
By considerng the constant sheaf $\bold F_p$ on $X$,
the associate simple complex $t(\bold F_p(U/X))$ of
$$
\bold F_p(U)\to \bold F_p(U\times_X U) \to\bold F_p(U\times_X U\times_X U) \to \cdots.
$$
becomes a DGA. Similarly, we have
a DGA $t(\Cal AS(U/X))$ arising from the sheaf ${\Cal AS}$ of Artin-Schreier DGA.

\subsection{Bar complex of Artin-Schreier DGA}

Let $X$ be a
separated irreducible scheme of finite type over $\bold F_p$
with an $\bold F_p$-valued point $x$ of $X$.
Let $\Cal W=\{W_i \to X\}_{i\in I}$ be a finite affine covering of $X$
indexed by $I$.
We choose $i\in I$ such that the base point
$x$ factors through $Spec(\bold F_p) \to W_i\subset X$. By using
this $x$, we get an augmentation 
$\epsilon:\check{C}(I,AS(\Cal W))\to \bold F_p$.

\begin{theorem}
Let $X$ be a separted irreducible scheme of finite type over $\bold F_p$
and $x$ be an $\bold F_p$-valued point of $X$.
The category of $H^0(B_{red}(\check{C}(I,AS(\Cal W)),\epsilon))$-comodules 
is equivalent
to the category of nilpotent smooth etale $\bold F_p$-sheaves of finite
rank on $X$. 
\end{theorem}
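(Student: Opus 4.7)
The plan is to factor the desired equivalence through two results already in hand: Proposition \ref{restatement of Katz} identifies nilpotent smooth etale $\bold F_p$-sheaves on $X$ with nilpotent $\Cal O_R$-modules for the relation diagram $R=(X\rightrightarrows X)$ given by $id_X$ and $F_X$, and Theorem 7.6 of \cite{T} identifies $H^0(B_{red}(A,\epsilon))$-comodules with the homotopy category $(HINC_A)$ of integrable nilpotent $A$-connections whenever $A$ is a connected DGA. It therefore suffices to construct a natural equivalence between $(HINC_{\check{C}(I,AS(\Cal W))})$ and the category of nilpotent $\Cal O_R$-modules.

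First I would check that $A^{\bullet}:=\check{C}(I,AS(\Cal W))$ is connected in the sense of Definition \ref{def of nilp conn}. Since $A^{\bullet}$ is concentrated in non-negative degrees, $H^i(A^{\bullet})=0$ for $i<0$. For $H^0$, an element of $A^0=\prod_i \Gamma(W_i,\Cal O_{W_i})$ is a cocycle exactly when it is both Cech-closed (so it glues to a global section $a\in\Gamma(X,\Cal O_X)$) and AS-closed ($F(a)=a$); by irreducibility of $X$ this forces $a\in\bold F_p$, so $H^0(A^{\bullet})=\bold F_p$. The chosen $\bold F_p$-point $x\in W_i$ supplies the augmentation $\epsilon:A^{\bullet}\to \bold F_p$.

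For the main equivalence I would unpack a nilpotent integrable connection $\nabla:M\to A^1\otimes M$ via the bi-grading $A^1=A^{(0,1)}\oplus A^{(1,0)}=\prod_i \Gamma(W_i,\Cal O_{W_i})\oplus \prod_{i<j}\Gamma(W_{ij},\Cal O_{W_{ij}})$. Writing $\nabla=\varphi+\alpha$ with $\varphi=\{\varphi_i\}$ and $\alpha=\{\alpha_{ij}\}$, the integrability condition $(d\otimes 1_M)\circ\nabla=(\mu\otimes 1_M)\circ(1_A\otimes\nabla)\circ\nabla$ of Definition \ref{def of nilp conn}(\ref{connction finite dimensional}), expanded via the Alexander-Whitney product of Definition \ref{A-W prodct}, splits by bi-degree in $A^2\otimes M$: in bi-degree $(2,0)$ one recovers the cocycle relation $\alpha_{ik}=\alpha_{ij}\alpha_{jk}$ on $W_{ijk}$; in bi-degree $(1,1)$ the compatibility between the Frobenius structures $\varphi_i$ and the gluing maps $\alpha_{ij}$ on $W_{ij}$; and bi-degree $(0,2)$ is automatic as $AS$ has length two. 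The nilpotent filtration on $M$ becomes a nilpotent filtration of the assembled $\Cal O_R$-module in the sense of Section \ref{Recall}. Morphisms in $(HINC_A)$ land in $A^0\otimes M_2=\prod_i\Gamma(W_i,\Cal O_{W_i})\otimes M_2$ and encode morphisms of descent data, while the homotopy relation is trivial since $A^{-1}=0$.

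The main obstacle is the bi-degree bookkeeping in the previous paragraph: the Alexander-Whitney product of Definition \ref{A-W prodct} is associative but not graded commutative, so the sign conventions of Section \ref{convention} for shifting pieces of $A^1$ across $M$ and across the Artin-Schreier differential must be carried through the expansion of $(\mu\otimes 1_M)\circ(1_A\otimes\nabla)\circ\nabla$ into its three bi-degree components. Once this dictionary is verified, a quasi-inverse is constructed routinely from a nilpotent $\Cal O_R$-module together with local trivializations over $\Cal W$, and composing the resulting equivalence with Proposition \ref{restatement of Katz} and Theorem 7.6 of \cite{T} gives the theorem.
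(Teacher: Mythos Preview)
Your plan is sound and genuinely different from the paper's. After the sign bookkeeping you flag, the $(2,0)$ component of integrability gives the Zariski cocycle condition for $1-\alpha$ (not for $\alpha$ itself), the $(0,2)$ component is vacuous, and the $(1,1)$ component---because of the asymmetry $(\xi)_1\cup(y)_0=(y^p\xi)_1$ in the Artin-Schreier product---gives precisely the compatibility of $1-\varphi_i$ with the gluing $1-\alpha_{ij}$ and its Frobenius pullback. Since $A^{-1}=0$ the homotopy relation on morphisms is trivial, and your dictionary with nilpotent $\Cal O_R$-modules goes through.

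The paper takes a different route: instead of interpreting $(HINC_{\check C(I,AS(\Cal W))})$ directly on the coherent side, it first replaces the DGA by a quasi-isomorphic one on the \emph{etale} side. Using the Artin--Schreier exact sequence and faithfully flat descent for $\Cal O$, it shows that $\check C(I,AS(\Cal W))$ is quasi-isomorphic as a DGA to $\varinjlim_{\Cal U} t(\bold F_p(U/X))$, the \v Cech DGA for the constant sheaf $\bold F_p$ over etale covers, and then proves as a separate proposition that $(HINC)$ for \emph{that} DGA is equivalent to nilpotent etale local systems, via $\rho=1-\nabla$ regarded as etale descent data for the constant sheaf $M_U$. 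Your argument stays Zariski/coherent throughout and invokes Katz's theorem only at the end, which is more elementary and avoids the inductive limit over etale covers; the paper's detour, on the other hand, buys the coalgebra identification $H^0(B_{red}(\check C(I,AS(\Cal W)),\epsilon))^*\simeq \bold F_p[[\pi_1(X,\bar x)]]$ of Corollary~\ref{isomorphism as a coalgebra} essentially for free, which your approach does not produce directly.
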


\begin{corollary}
Let $A$ be an $\bold F_p$-algebra of finite type.
Assume that $Spec(A)$ is irreducible.
The category of $H^0(B_{red}(AS(A),\epsilon))$-comodules is equivalent
to the category of smooth etale $\bold F_p$-sheaves on $Spec(A)$. 
\end{corollary}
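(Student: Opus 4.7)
The plan is to derive the corollary as a direct specialization of the preceding theorem, taking the trivial one-element covering of $X = Spec(A)$. Since $X$ is affine, I would set $I = \{1\}$ and $\Cal W = \{W_1 = X \to X\}$, which is a (trivial but permitted) finite affine covering of the irreducible scheme $X$. The $\bold F_p$-valued point $x$ that is implicit in the notation $\epsilon$ factors tautologically through $W_1 = X$, so the hypotheses of the theorem are satisfied with this choice of covering and basepoint index.

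Next I would show that with this choice the Cech construction collapses to the Artin-Schreier DGA itself. Since $\Cal P(I)$ consists only of the singleton $\{1\}$ (with $\mid\{1\}\mid = 0$), the Cech double complex $\check{\bold C}(I, AS(\Cal W))$ has a single nonzero column $AS(W_1) = AS(A)$ sitting in outer degree zero, and the associated simple complex $\check{C}(I, AS(\Cal W))$ is canonically identified with $AS(A)$ as a complex. Under this identification, the Alexander-Whitney product of Definition \ref{A-W prodct}(\ref{A-W prod strict cosimplical}) specializes to the original multiplication on $AS(A)$, because the only decomposition $\bold i_1 * \bold i_2 \vdash \{1\}$ is the trivial one, so the sum in (\ref{Alexander-Whitney product primitive}) reduces to the single term $a_{\{1\}} \cdot b_{\{1\}}$. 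The augmentation $\epsilon$ built from $x$ on $\check{C}(I, AS(\Cal W))$ then matches the canonical augmentation of $AS(A)$ induced by $x$.

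Substituting these identifications into the conclusion of the theorem immediately yields an equivalence between the category of $H^0(B_{red}(AS(A), \epsilon))$-comodules and the target category appearing in the theorem, which is the content of the corollary. I do not anticipate any genuine obstacle: the argument is pure bookkeeping, and the only subtlety is to verify the collapse of the Cech differentials on a one-element cover and the compatibility of the DGA and augmentation structures under this collapse. The conclusion of the corollary should of course be read in the same sense as the theorem (namely, nilpotent smooth etale $\bold F_p$-sheaves of finite rank on $Spec(A)$), so that it becomes a direct unwinding of the theorem rather than a stronger independent statement.
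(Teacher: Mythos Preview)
Your approach is exactly the intended one: the paper does not give a separate argument for the corollary (the \texttt{proof} environment that follows it is the proof of the preceding theorem), and the corollary is meant to be read as the immediate specialization to the one-element affine covering $\Cal W=\{X\}$, where $\check{C}(I,AS(\Cal W))=AS(A)$. Your remark that the conclusion should be read as ``nilpotent smooth etale $\bold F_p$-sheaves of finite rank'' is also correct; the wording in the corollary is simply elliptic.
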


\begin{proof}
Let $U_i\to W_i$ be an etale covering of $W_i$.
Then for a subset $J$ of $I$, $U_J=U_{j_0}\times_X\cdots \times_X U_{j_n}$
is an etale covering of $W_J$. The union $U=\coprod_{i\in I}U_i$
is an etale covering of $X$.
For a commutative diagram 
$$
\begin{matrix}
S' &\to& S \\
\downarrow & & \downarrow \\
T' & \to&  T
\end{matrix}
$$
where $S'\to T'$ and $S\to T$ are etale coverings,
we have a homomorphism of DGA
$$
t(\Cal A(S/T)) \to t(\Cal A(S'/T')).
$$
Therefore by attaching $t(\Cal A(U_J/W_J))$
to $J$, we obtain a strict cosimplicial DGA indexed by $I$, which is denoted as
$t(\Cal A(\Cal U/\Cal W))$.
Therefore we have a homomorphism of complex
$$
t(\Cal A(U/X))\to \check{C}(I,t(\Cal A(\Cal U/\Cal W))).
$$
By taking a inductive limit on $\Cal U$ for the diagrams

\setlength{\unitlength}{0.65mm}
\begin{picture}(200,25)(-14,0)
\put(50,20){\vector(1,0){25}}
\put(47,16){\vector(1,-1){10}}
\put(79,16){\vector(-1,-1){10}}

\put(60,0){$W_i$,}
\put(40,18){$U_i'$}
\put(80,18){$U_i$}

\end{picture}
\noindent
we have the following quasi-isomorphism for an etale sheaf $\Cal A$ on 
$X_{et}$:
\begin{equation}
\label{quasi-iso see milne}
\lim_{\underset{\Cal U}\to}
t(\Cal A(U/X))\to 
\lim_{\underset{\Cal U}\to}
\check{C}(I,t(\Cal A(\Cal U/\Cal W))).
\end{equation}
Thus we have the following commutative diagram of quasi-isomorphisms:
$$
\begin{matrix}
\displaystyle\lim_{\underset{\Cal U}\to}
t(\bold F_p(U/X))&\to& 
\displaystyle\lim_{\underset{\Cal U}\to}
\check{C}(I,t(\bold F_p(\Cal U/\Cal W))) \\
\downarrow & &\downarrow \\
\displaystyle\lim_{\underset{\Cal U}\to}
t(\Cal AS(U/X))&\to &
\displaystyle\lim_{\underset{\Cal U}\to}
\check{C}(I,t(\Cal AS(\Cal U/\Cal W))) 
\end{matrix}
$$
The horizontal quasi-isomorphisms is those from the quasi-isomorphism
 (\ref{quasi-iso see milne})
and the vertical quasi-isomorphism on the left comes from the
 Artin-Schreier exact sequence for etale topology.
$$
0\to \bold F_p \to \Cal O_X \overset{F-id}\to \Cal O_X \to 0.
$$
By the descent theory for coherent sheaves for etale topology, 
we have an exact sequence
$$
0\to \Gamma(W_J,\Cal O_{W_J})\to \Gamma(U_J,\Cal O_{U_J})
\to \Gamma(U_J\times_{W_J}U_J,\Cal O_{U_J\times_{W_J}U_J})\to \cdots
$$
and as a consequence, the morphism
$AS(W_J)\to\Cal AS(U_J/W_J)$ is a quasi-isomorphism.
Therefore we have a quasi-isomorphism of DGA's:
$$
\check{C}(I,AS(\Cal W))) 
\to \displaystyle\lim_{\underset{\Cal U}\to}
\check{C}(I,t(\Cal AS(\Cal U/\Cal W))).
$$
Let $\bar x$ be a geometric point over $x$ and $\bar \epsilon$
be a lift $\bar x\to \underset{\ot}\lim U_i$ of $x$.
Then we have a homomorphism of algebras
$\underset{\to}\lim \bold F_{p}(U_i)\to \bold F_p$
and it defines an augmentation of $t(\bold F_p(U/X))$, which
is also denoted as $\bar\epsilon$.
Using the following diagram,
\begin{equation}
\label{diagram 1}
\end{equation}
\setlength{\unitlength}{0.65mm}
\begin{picture}(200,38)(0,-3)
\put(45,20){\vector(1,0){15}}
\put(110,20){\vector(-1,0){15}}
\put(150,20){\vector(-1,0){15}}
\put(77,31){\vector(0,-1){7}}
\put(123,8){\vector(0,1){7}}
%
%\put(35,16){\vector(3,-1){35}}
\put(33,16){\line(0,-1){12}}
\put(33,4){\vector(1,0){85}}
\put(35,25){\vector(2,1){20}}
\put(155,16){\vector(-2,-1){25}}
\put(160,25){\line(0,1){12}}
\put(160,37){\vector(-1,0){62}}

\put(121,2){$\bold F_p$}
\put(10,18){$\underset{\to}\lim \bold F_p(U_i/W_i)$}
\put(65,18){${\Cal AS}(\overline{\bold F_p}/\overline{\bold F_p})$}
\put(114,18){${\Cal AS}(\overline{\bold F_p})$}
\put(154,18){${\Cal AS}(W_i)$}
\put(60,35){$\underset{\to}\lim {\Cal AS}(U_i/W_i)$}
\put(80,5){$\bar\epsilon$}
\put(145,8){$\epsilon$}
\put(143,21){$\tilde\epsilon$}
\put(33,33){{\tiny quasi-iso}}
\put(120,34){{\tiny quasi-iso}}
\put(95,22){{\tiny quasi-iso}}
\put(107,10){{\tiny quasi-iso}}

\end{picture}
\noindent
we have an isomorphism of coalgebras:
$$
\underset{\underset{\Cal U}\to}\lim\ 
H^0(B(t(\bold F_p(U/X)),\bar \epsilon))\simeq
H^0(B(\check{C}(I,AS(\Cal W)),\epsilon)).
$$
By the following proposition,
we have the theorem.
\end{proof}
\begin{proposition}
The homotopy category of integrable unipotnent 
$\displaystyle\lim_{\underset{\Cal U}\to}t(\bold F_p(U/X))$-
connections is equivalent to the category of nilpotent smooth etale
$\bold F_p$ sheaves.
\end{proposition}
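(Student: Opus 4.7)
The plan is to construct a direct Cech-theoretic equivalence, exploiting that $A := \lim_{\Cal U} t(\bold F_p(U/X))$ is built from cosimplicial algebras of functions on fiber products $U_n$ that sit in cohomological degree zero. Since $M$ is finite dimensional and the nilpotent filtration has finite length, every integrable unipotent $A$-connection $(M,\nabla)$ factors through a single etale cover: there is $U \to X$ (which may be refined at will) such that $\nabla$ lands in $t(\bold F_p(U/X))^1 \otimes M = \bold F_p(U\times_X U) \otimes M$. Thus $\nabla$ is concretely a function on $U\times_X U$ valued in nilpotent, filtration-preserving endomorphisms of $M$.

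I would then observe that $g := 1 + \nabla$ is a unipotent automorphism of $M \otimes \bold F_p(U)$ pulled back to $U\times_X U$. The integrability equation $d\nabla + \nabla \cup \nabla = 0$, with $d$ the Cech differential on $U\times_X U\times_X U$ and $\cup$ the Alexander-Whitney product of Definition \ref{A-W prodct}(\ref{A-W prod simplical}), unravels to $\nabla_{13} = \nabla_{12} + \nabla_{23} + \nabla_{12}\nabla_{23}$, which is precisely the $1$-cocycle condition $g_{13} = g_{12}\, g_{23}$. Etale descent then produces a smooth etale $\bold F_p$-sheaf $\Cal F$ of finite rank on $X$ with geometric fibre $M$; the filtration $F^\bullet M$, being preserved by $g$ and acting trivially on $\mathrm{gr}\,M$, descends to a filtration on $\Cal F$ with constant graded pieces, so $\Cal F$ is nilpotent.

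For the inverse, given a nilpotent smooth etale $\bold F_p$-sheaf $\Cal F$ of finite rank with nilpotent filtration, one chooses a common etale trivializing cover for all the extension layers (finite rank plus constant graded pieces guarantees a single finite cover suffices). Its transition function on $U\times_X U$ is automatically unipotent, so of the form $1+\nabla$ with $\nabla$ nilpotent, recovering an integrable connection. The two constructions are quasi-inverse up to refinement of the cover, which is absorbed in the inductive limit defining $A$. Morphisms match via the $A^0$-part of Definition \ref{def of nilp conn}(3): a morphism $\varphi: M_1 \to \bold F_p(U)\otimes M_2$ is a gauge function on $U$, and the compatibility diagram is exactly the descent compatibility of transition functions. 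Since $A^i = 0$ for $i<0$, the homotopy relation on morphisms is trivial, and the ``homotopy category'' coincides with the plain category of integrable nilpotent connections.

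The main obstacle is the careful bookkeeping in the middle step: matching the Alexander-Whitney product on the total complex of the cosimplicial algebra $\{\bold F_p(U_n)\}_n$ to the ordered concatenation on triple fiber products, and tracking the signs of Section \ref{convention} so that the DGA integrability equation becomes the unsigned cocycle identity $g_{13} = g_{12}\,g_{23}$ (rather than some twisted variant). A secondary concern is cofinality in the inductive limit over etale covers: one must check that refinements act functorially on both sides and that morphisms between connections defined on different covers may be compared on a common refinement; this follows from naturality of the Cech constructions but deserves an explicit verification.
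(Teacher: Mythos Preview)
Your proposal is correct and follows essentially the same route as the paper: pass from the integrable connection $\nabla\in\bold F_p(U\times_X U)\otimes End(M)^{nil}$ to the unipotent automorphism $1\pm\nabla$, unpack integrability as the $1$-cocycle condition on $U\times_X U\times_X U$, and apply \'etale descent. The paper uses $\rho=1-\nabla$ where you use $g=1+\nabla$, reflecting the sign discrepancy you flag; your additional remarks on the inverse functor, on morphisms, and on the triviality of the homotopy relation (since $A^{-1}=0$) are correct and in fact more explicit than the paper, which simply asserts that the construction gives an equivalence.
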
 
\begin{proof}
For a scheme $X$, the constant sheaf on $X$ with values in $M$ is denoted
by $M_X$.
Let $(M,\nabla)$ be a 
$\displaystyle\lim_{\underset{\Cal U}\to}t(\bold F_p(U/X))$-connection,
and $F$ be a nilpotent filtration of $M$ for the connection.
We construct an etale sheaf on $X$ by descending a constant
sheaf on an etale covering of $X$.

Since $M$ is finite dimensional
(see the Definition \ref{def of nilp conn} 
(\ref{connction finite dimensional})),
there exists an etale
covering $U\to X$ and a map
$$
\nabla:M \to A^1\otimes M,
$$
where $A^{\bullet}=t(\bold F_p(U/X))$. Let $End(M)^{nil}$
be the set of nilpotent endomorphism with respect to the filtration $F$.
The map $\nabla$ defines an element of 
$A^1\otimes End(M)^{nil}=\bold F_p(U\times_X U)\otimes End(M)^{nil}$,
which is also denoted by $\nabla$.
For an element $\varphi\in A^p\otimes End(M)^{nil}$ and
$\psi\in A^q\otimes End(M)^{nil}$, the composite $\varphi\circ \psi$
is defined by the following composite map
\begin{align*}
A^{p}\otimes End(M)^{nil} \otimes A^q \otimes End(M)^{nil}
&\to A^{p+q} \otimes End(M)^{nil}: \\
\omega\otimes \alpha \otimes \eta \otimes \beta &\mapsto 
(\omega\cup \eta)\otimes (\varphi\circ \psi).
\end{align*}
By the integrability condition, we have
$$
pr_{01}^*(\nabla)\circ pr_{12}^*(\nabla)=
pr_{12}^*(\nabla)-pr_{02}^*(\nabla)+pr_{01}^*(\nabla).
$$
We construct a descent data for $M_U$.
Via the isomorphism,
$$
\bold F_p(U\times_X U)\otimes End(M)\simeq
End(M_{U\times_XU}),
$$
the element $\rho=1-\nabla$ in $Aut(M_{U\times_XU})$ gives an
isomorphism $\rho:pr_0^* M_U\to pr_1^* M_U$.
Since
\begin{align*}
(pr_{01}^*(\rho))\circ
(pr_{12}^*(\rho))&=
(pr_{01}^*(1-\nabla))\circ
(pr_{12}^*(1-\nabla)) \\
&=1-pr_{01}^*(\nabla)-pr_{12}^*(\nabla)+
pr_{01}^*(\nabla)\circ pr_{12}^*(\nabla)
\\
&=
1-pr_{02}^*(\nabla)
=pr_{02}^*(\rho).
\end{align*}
Therefore $\rho=1-\nabla$ satisfies the 1-cycle condition and it defines a
descent data for $M_U$.
We see that a homomorphism of connections defines a morphism of
the descended sheaves. This gives an equivalence of the categories.
\end{proof}
By the theory of Tannakian categories, we have the following theorem.
\begin{corollary}
\label{isomorphism as a coalgebra}
The space $H^0(B_{red}(\check{C}(I,AS(\Cal W)),\epsilon))^*$ is isomorphic to 
$\bold F_p[[\pi_1(X,\bar x)]]$, where
$$
\bold F_p[[\pi_1(X,\bar x)]]=
\underset{\underset{\ot}n}\lim\  
\bold F_p[\pi_1(X,\bar x)]/I^n
$$
as algebras. 
\end{corollary}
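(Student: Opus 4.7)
The plan is to apply standard Tannakian reconstruction to the equivalence just established. Set $H:=H^0(B_{red}(\check{C}(I,AS(\Cal W)),\epsilon))$, endowed with the coalgebra structure of (\ref{comult on h0}); by the theorem above, the fiber functor at $\bar x$ makes the category of finite-dimensional $H$-comodules equivalent, as an $\bold F_p$-linear tensor category with fiber functor, to $(NLS_{\bold F_p}/X_{et})$.

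Next, I identify $(NLS_{\bold F_p}/X_{et})$ group-theoretically. Via the fiber at $\bar x$, a nilpotent smooth etale $\bold F_p$-sheaf of finite rank on $X$ is the same as a finite-dimensional $\bold F_p$-representation of $\pi_1(X,\bar x)$ on which the augmentation ideal $I$ acts nilpotently, equivalently a finite-dimensional discrete module over the complete Hopf algebra $\bold F_p[[\pi_1(X,\bar x)]]=\lim_n \bold F_p[\pi_1(X,\bar x)]/I^n$. By the standard duality between linearly compact and discrete Hopf algebras, such modules form, as a tensor category with fiber functor, the same category as comodules over the discrete Hopf algebra $\operatorname{colim}_n (\bold F_p[\pi_1(X,\bar x)]/I^n)^*$; the tensor product of comodules corresponds to the tensor product of $\pi_1$-representations, induced by the group diagonal.

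By reconstruction---the coefficient Hopf algebra of a category of finite-dimensional comodules together with a fiber functor is determined up to canonical isomorphism (via the coend of the fiber functor)---one obtains a canonical isomorphism of Hopf algebras $H\simeq \operatorname{colim}_n (\bold F_p[\pi_1(X,\bar x)]/I^n)^*$. Taking linear $\bold F_p$-duals and commuting duality with the filtered colimit of finite-dimensional quotients yields the desired algebra isomorphism
$$
H^*\simeq \lim_n \bold F_p[\pi_1(X,\bar x)]/I^n=\bold F_p[[\pi_1(X,\bar x)]],
$$
the algebra structure on $H^*$ (dual to the coalgebra structure of $H$) being transported to the usual completed group-algebra multiplication.

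The main obstacle is the reconstruction step: one must verify that the equivalence of the previous theorem really is compatible with the tensor product on both sides---that the coalgebra comultiplication (\ref{comult on h0}) on $H$ corresponds under the fiber functor to the coproduct on $\operatorname{colim}_n (\bold F_p[\pi_1(X,\bar x)]/I^n)^*$ coming from the group diagonal of $\pi_1(X,\bar x)$---so that upon dualizing one recovers the algebra structure on $\bold F_p[[\pi_1(X,\bar x)]]$ that comes from group multiplication, and not merely an isomorphism of coalgebras.
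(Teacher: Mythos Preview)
Your approach is the same as the paper's---the paper simply says ``By the theory of Tannakian categories'' and states the corollary---and your expansion of that phrase into an explicit coend/reconstruction argument is correct up to the penultimate paragraph.

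The ``main obstacle'' you flag at the end, however, is misplaced. The corollary asserts an isomorphism \emph{of algebras}; the algebra structure on $H^*$ is dual to the \emph{coalgebra} structure (\ref{comult on h0}) on $H$, and the algebra structure on $\bold F_p[[\pi_1(X,\bar x)]]$ is the completed group-ring multiplication. Reconstruction of a coalgebra from its category of finite comodules together with a fiber functor requires only that the equivalence be an exact $\bold F_p$-linear equivalence compatible with the fiber functors---no tensor compatibility is needed. Thus the equivalence of Theorem~4.1 already gives a coalgebra isomorphism $H\simeq \operatorname{colim}_n(\bold F_p[\pi_1]/I^n)^*$, and dualizing yields the algebra isomorphism claimed. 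Tensor compatibility would be required only to match the \emph{other} half of the Hopf structure, namely the coproduct on $\bold F_p[[\pi_1(X,\bar x)]]$ coming from the group diagonal against a product on $H$; this is precisely what the paper postpones to Section~\ref{Shuffle}, where the homotopy shuffle product is introduced for that purpose. (Relatedly, note a small slip in your last paragraph: the coproduct on $\operatorname{colim}_n(\bold F_p[\pi_1]/I^n)^*$ is dual to group multiplication, not to the group diagonal.)
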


\subsection{A variant for geometric base points}

Let $X$ be a connected separated scheme of finite type over $\bold F_p$
and $\Cal W=\{W_i\}$ be 
an affine covering of $X$. Let $\bar x:Spec(\overline{\bold F_p})\to X$ 
be an $\overline{\bold F_p}$-valued geoemetric point of $X$. 
By choosing a morphism $Spec(\overline{\bold F_p})\to W_i$ for some $i$,
we have a homomorphim $\tilde\epsilon:AS(W_i)\to AS(\overline{\bold F_p})$
and the induced DGA homomorphism
$\check{C}(I,AS(\Cal W))\to AS(\overline{\bold F_p})$
is also denoted as $\tilde \epsilon$.
Since the natural map $\bold F_p \to AS(\overline{\bold F_p})$
is a quasi-isomorphism, we can consider the bar complex
of $\check{C}(I,AS(\Cal W))$
for an augmentation map $\tilde\epsilon$.
We can consider the same daigram (\ref{diagram 1}) except for
$\epsilon$.
As a consequence, we have the following proposition.
\begin{proposition}
The reduced bar complex
$
B(\check{C}(I,AS(\Cal W)),\tilde\epsilon)
$
is quasi-isomorphic to the bar complex 
$\underset{U}\lim\ B(t(\bold F_p(U/X)),\bar\epsilon)$,
and
$H^0(B(\check{C}(I,AS(\Cal W)),\tilde\epsilon))$
is isomorphic to
$\underset{U}\lim\ H^0(B(t(\bold F_p(U/X)),\bar\epsilon))$
as a coalgebra.
\end{proposition}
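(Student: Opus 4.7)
The plan is to reuse the proof of the preceding theorem essentially verbatim, replacing $\epsilon$ by $\tilde\epsilon$ throughout diagram (\ref{diagram 1}). The only genuinely new ingredient needed is that the reduced bar construction behaves well with augmentations valued in a DGA that is only quasi-isomorphic to the base field, rather than equal to it. So first I would invoke the homotopy invariance of the reduced bar complex: if $(A,\epsilon_A)\to (A',\epsilon_{A'})$ is a quasi-isomorphism of augmented DGAs, then $B_{red}(A,\epsilon_A)\to B_{red}(A',\epsilon_{A'})$ is a quasi-isomorphism. This is standard and follows from the filtration of $\bold B_{red}$ by tensor-degree together with the K\"unneth formula.

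The key enabling fact is that the natural inclusion $\bold F_p\to AS(\overline{\bold F_p})$ is a quasi-isomorphism. This is exactly the Artin-Schreier sequence
\begin{equation*}
0\to \bold F_p\to \overline{\bold F_p}\overset{F-\text{id}}\longrightarrow\overline{\bold F_p}\to 0,
\end{equation*}
which is exact at the middle and right terms because $x^p-x-a$ always splits over $\overline{\bold F_p}$. Combined with the previous paragraph, this means the augmentation $\tilde\epsilon:\check{C}(I,AS(\Cal W))\to AS(\overline{\bold F_p})$ produces a reduced bar complex canonically quasi-isomorphic to the bar complex computed with any honest $\bold F_p$-valued augmentation obtained by composing $\tilde\epsilon$ with a section or retraction from $AS(\overline{\bold F_p})$.

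With this in hand I would trace diagram (\ref{diagram 1}) exactly as in the proof of the theorem, now carrying $\tilde\epsilon$ (on the right) and $\bar\epsilon$ (on the left) instead of $\epsilon$. Every horizontal arrow is already either a DGA morphism or one of the quasi-isomorphisms established there (etale descent for $\Cal O$-modules combined with the Artin-Schreier sequence on $X_{et}$ for the vertical quasi-isomorphisms, and refinement over the directed system of etale covers $\Cal U$ for the other direction). Applying the bar functor and invoking homotopy invariance yields a zig-zag of quasi-isomorphisms linking $B(\check{C}(I,AS(\Cal W)),\tilde\epsilon)$ with $\lim_{\Cal U}B(t(\bold F_p(U/X)),\bar\epsilon)$. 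Passing to $H^0$ gives the coalgebra isomorphism, because the comultiplication $\mu$ of (\ref{comult on h0}) is natural in DGA morphisms, so every arrow is a coalgebra map.

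The main obstacle is not any single step but the bookkeeping of augmentations: one must verify that $\tilde\epsilon$ on $\check{C}(I,AS(\Cal W))$ and $\bar\epsilon$ on $\lim_{\Cal U}t(\bold F_p(U/X))$ really are related through diagram (\ref{diagram 1}). By construction both augmentations factor through the same chosen lift $\bar x\to\varprojlim U_i$ of the geometric point and through the common corner ${\Cal AS}(\overline{\bold F_p}/\overline{\bold F_p})\to {\Cal AS}(\overline{\bold F_p})$, so the commutativity built into (\ref{diagram 1}) persists and no further argument is needed.
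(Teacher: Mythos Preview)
Your proposal is correct and matches the paper's own argument: the paper simply says ``we can consider the same diagram (\ref{diagram 1}) except for $\epsilon$'' after noting that $\bold F_p\to AS(\overline{\bold F_p})$ is a quasi-isomorphism, and you have written out precisely the details that justification requires. Your explicit invocation of the homotopy invariance of $B_{red}$ under quasi-isomorphisms of augmented DGAs, and your remark about the compatibility of $\tilde\epsilon$ and $\bar\epsilon$ through the common corner of the diagram, are exactly the points the paper leaves implicit.
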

\subsection{Universal pro-$p$ covering of schemes}

In this section, we give a description of the universal object
which gives an equivalece of the category
of $H^0(B(AS(\Cal W/X),\epsilon))$-comodules and 
that of $\bold F_p$ local systems on a connected 
$\bold F_p$-scheme $X$ of finite type. 
We set $A^{\bullet}=AS(\Cal W/X)$. 

Let $H$ be a coalgebra and $L$ and $M$ be a right and left comodules over $H$.
The coaction is denoted as
\begin{align*}
\ _L\Delta :L\to L\otimes H, \quad \Delta_M:M\to H\otimes M.
\end{align*}
The cotensor product of $cotor_H(L,M)$ is defined by
the kernel of the map
$$
L \otimes M \to L\otimes H\otimes M:l\otimes m\mapsto 
\ _L\Delta(l)\otimes m- l\otimes  \Delta_M(m).
$$
Let
$$
M=(M\overset{\nabla}\longrightarrow A^1\otimes M)
$$ 
be an $A$-connection
with a right $H^0(B(A,\epsilon))$ comodule structure
$$
\ _M\Delta:M\to M\otimes H^0(B(A,\epsilon))
$$
such that
$$
\begin{matrix}
M&\to &M\otimes H^0(B(A,\epsilon)) \\
\nabla_M\downarrow\phantom{***} & & \downarrow \nabla_M\otimes 1 \\ 
A^1\otimes M & \to & A^1\otimes M\otimes H^0(B(A,\epsilon))
\end{matrix}
$$
is commutative. Then for a left  
$H^0(B(A,\epsilon))$ comodule $F$, $cotor_H(M,F)$
becomes an $A$-connection since the following diagram is commutative:
$$
\begin{matrix}
M\otimes F
&\to & M\otimes H^0(B(A,\epsilon)) \otimes F \\
\nabla_M\otimes 1 \downarrow \phantom{*****} & &
\phantom{*****} 
\downarrow \nabla_M\otimes 1 \otimes 1\\
A^1\otimes M\otimes F
&\to & 
A^1\otimes M\otimes H^0(B(A,\epsilon)) \otimes F. \\
\end{matrix}
$$
We apply the defnition of $cotor_H(M,F)$ if $M$ is an inductive
limit of (finite dimensional) integrable nilpotent connection
with a left $H^0(B(A,\epsilon))$-coaction.

\begin{definition}[Universal connection]
The pair $(M,\Delta_M)$ is called the universal connection if
the functor
$$
(H-comod)\to (HINC_A):F \mapsto cotor_H(M,F)
$$
is an equivalence of categories.
\end{definition}
It is easy to see that the universal connection is unique
up to isomorphism if it exists.
In this section, we construct the universal connection.

Since $X$ is a connected $\bold F_p$ scheme, 
$A^{\bullet}$ is a connected DGA over $\bold F_p$.
We choose ${A'}^1\subset A^1$ such that ${A'}^1\oplus dA^0\simeq A^1$.
Then 
$$
{A'}=\bold F_p\oplus {A'}^1\oplus\bigoplus_{i\geq 2}A^i
$$
is a sub-DGA of $A$ and is quasi-isomorphic to $A$.
Also $B_{red}(A',\epsilon)^i=0$ for $i<0$ and we have
$i:H^0(B_{red}(A',\epsilon))\subset B_{red}(A',\epsilon)$.
We consider the comultiplication
$$
H^0(B_{red}(A',\epsilon))\to
H^0(B_{red}(A',\epsilon))\otimes 
H^0(B_{red}(A',\epsilon))
$$
defined in (\ref{comult on h0}).
By composing the inclusion $i$, and the projection 
$B_{red}(A',\epsilon)\to A^1$, we have a map
$$
\nabla_B:H^0(B_{red}(A',\epsilon))\to
B_{red}(A',\epsilon)\otimes 
H^0(B_{red}(A',\epsilon))\to
A^1\otimes 
H^0(B_{red}(A',\epsilon)).
$$
Then the pair $(H^0(B_{red}(A',\epsilon)),\nabla_B)$
is an integrable nilpotent $A$-connection.
By Theorem 7.6 of \cite{T}, we have the following proposition. 
\begin{proposition}
The connection $(\nabla_B,H^0(B_{red}(A',\epsilon)))$
is the universal connection.
\end{proposition}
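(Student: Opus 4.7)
The strategy is to reduce the statement to Theorem 7.6 of \cite{T} combined with the elementary identity $cotor_H(H,-) \cong \text{id}$ on left $H$-comodules, where I write $H := H^0(B_{red}(A',\epsilon))$. Throughout I use that, since $A'^i=0$ in negative degrees and $A'^0=\bold F_p$, the bar complex $B_{red}(A',\epsilon)$ is concentrated in non-positive bar-degree and $H$ is identified with the cycles in bar-degree zero.

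First I would verify that $(H,\nabla_B)$ really is an integrable nilpotent $A$-connection equipped with a compatible right $H$-coaction given by the reduced coproduct $\bar\mu:H\to H\otimes H$ induced from $\mu$. Nilpotency follows from the decreasing filtration by bar-word length: on a class represented by $x_1\otimes\cdots\otimes x_n$, the map $\nabla_B$ is essentially $x_1\otimes(x_2\otimes\cdots\otimes x_n)$, so $\nabla_B(F^iH)\subset A^1\otimes F^{i+1}H$, with $F^iH=0$ for $i$ larger than the bar length. Integrability amounts to the vanishing of $d\otimes 1_H\circ\nabla_B-(\mu_A\otimes 1)(1\otimes \nabla_B)\circ\nabla_B$; this comes from computing the bar differential on a word and comparing it with the coassociativity of $\mu$, together with the fact that $H$ sits in the kernel of $d_B$. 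The analogous coassociativity argument shows $\bar\mu$ is compatible with $\nabla_B$ in the sense required to form the cotensor connection.

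Next I would exploit that, by Theorem 7.6 of \cite{T}, the equivalence $\Phi:(HINC_A)\xrightarrow{\sim}(H\text{-comod})$ sends precisely this connection $(H,\nabla_B)$ to the regular $H$-comodule $H$ with coaction $\bar\mu$; indeed $\nabla_B$ is the tautological connection built from the coproduct and the projection onto $A^1$, which is the universal shape that Theorem 7.6 produces on any comodule. Granted this, for a left $H$-comodule $F$ the $A$-connection induced on $cotor_H(H,F)$ corresponds under $\Phi$ to $cotor_H(H,F)$ viewed as an $H$-comodule in the standard way. By the classical identity $cotor_H(H,F)\cong F$ (the map $F\to H\otimes F$, $f\mapsto \Delta_F(f)$, lands in the equalizer and is inverse to $(\epsilon_H\otimes 1_F)$), this is simply $F$ as an $H$-comodule. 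Thus $F\mapsto cotor_H(H,F)$ is naturally isomorphic to a quasi-inverse of $\Phi$, and hence is itself an equivalence.

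The main obstacle is the second step: pinning down that $\Phi$ sends $(H,\nabla_B,\bar\mu)$ to the regular bicomodule $H$, and that the $A$-connection obtained on $cotor_H(H,F)$ from $\nabla_B$ matches the one $\Phi^{-1}$ would manufacture directly from the $H$-comodule $F$. Once the equivalence of Theorem 7.6 is unpacked, this reduces to a bookkeeping calculation: checking that the formula for $\nabla_B$, viewed as composition of $\mu$ with the projection $B_{red}(A',\epsilon)\twoheadrightarrow A^1$, together with the defining identity for $cotor$, reproduces exactly the recipe that attaches a connection to a comodule in \cite{T}; all of the intervening identities collapse to the coassociativity of $\mu$ and the counit axiom.
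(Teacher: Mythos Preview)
Your proposal is correct and takes essentially the same approach as the paper: the paper's entire proof is the single sentence ``By Theorem 7.6 of \cite{T}, we have the following proposition,'' and your argument is precisely an unpacking of that citation together with the standard identity $cotor_H(H,F)\cong F$. The additional verifications you outline (integrability, nilpotency via the length filtration, compatibility of $\nabla_B$ with the coproduct) are exactly the checks one has to make to see that Theorem 7.6 applies and yields the universality statement, so there is no genuine divergence in method.
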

\begin{example}[Affine case]
Let $R$ be an integral domain over $\bold F_p$.
We choose $V$ such that $Im(x\mapsto x^p-x)\oplus V\simeq R$.
Then we have
$
H^0(B_{red}(A',\epsilon))\simeq \oplus_{i=0}^\infty V^{\otimes i}
$
and the universal connection is given as 
$$
\nabla:H^0(B_{red}(A',\epsilon)) \to V\otimes H^0(B_{red}(A',\epsilon)):
[a_1\mid a_2\mid \cdots \mid a_n]\mapsto
a_1\otimes [a_2\mid \cdots \mid a_n].
$$
The corresponding etale sheaf $F_{univ}$ is given by
$$
F(R)_{univ}= Ker\Big[
R\otimes H^0(B_{red}(A',\epsilon))
\overset{
d_{AS}\otimes 1-(\mu\otimes 1)(1\otimes\nabla)
}
\longrightarrow R\otimes H^0(B_{red}(A',\epsilon))\Big].
$$
for an etale $A$-algebra $R$. Here the differential
$d_{AS}$ is given by $d_{AS}(r)=r^p-r$.
\end{example}
\section{Homotopy shuffle product for strict cosimplicial DGA}
\label{Shuffle}
\subsection{Homotopy shuffle system}
In Corollary \ref{isomorphism as a coalgebra}, 
we showed the isomorphism
\begin{equation}
\label{isom pi1 and bar}
H^0(B_{red}(\check{C}(I,AS(\Cal W)),\epsilon)^* \simeq
\bold F_p[[\pi_1(X,\bar x)]]
\end{equation}
as coalgebras. 
The algebra $\bold F_p[[\pi_1(X,\bar x)]]$ has a coproduct
and
the $\bold F_p$-completion of $\pi_1(X,\bar x)$
(actually it is isomorphic to the pro-$p$ 
completion of $\pi_1(X,\bar
x)$, see Appendix.)
is defined to be the set of
group like elements in 
$\bold F_p[[\pi_1(X,\bar x)]]$.
The coproduct on $\bold F_p[[\pi_1(X,\bar x)]]$
corresponds to a product on
$H^0(B_{red}(\check{C}(I,AS(\Cal W)),\epsilon)$.
In the theory of real homolopy type, the cohomology of bar complex
the $C^{\infty}$ differential forms is equipped with
a product structure obtained from the shuffle product.

If the characeristic of the base field $\bold k$ is zero,
the associate simple complex of a strict cosimplicial commutative
DGA has a natural structure of commutative DGA by Thom-Whitney construction.
In our case, $\operatorname{char}(\bold k)=p>0$ and the Artin-Schreier
DGA is not graded commutative and as a consequence,
the shuffle product is not available as it is. Theough Artin-Schreier DGA
is not graded commutative, it is graded commutative up to 
homotopy. In this section, we discuss higher homotopy for
commutativity, which is necessary to define the homotopy
shuffle product on the bar complex.

Let $I$ be a finite ordered set and $A$ be a strict cosimplicial
graded commutative DGA indexed by $I$.
In this section, we construct a homotopy shuffle product 
$$
\bold B_{red}(\check{C}(I,A)\otimes,\epsilon)\otimes
\bold B_{red}(\check{C}(I,A)\otimes,\epsilon)\to
\bold B_{red}(\check{C}(I,A)\otimes,\epsilon),
$$
which induces a natural coproduct on $\bold F_p[[\pi_1(X,\bar x)]]$
via the linear isomorphism (\ref{isom pi1 and bar}).

Let $A$ be a complex. Let $\sigma$ be an element of $\frak S_n$.
A homogeneous element 
$a_1e_1\otimes \cdots \otimes a_ne_n$
of the tensor product 
$(A[1])^{\otimes m}=A[1]\otimes \cdots \otimes A[1]$
of $n$-copies of $A[1]$ goes to 
$\pm a_{\sigma(1)}e_{\sigma(1)}\otimes \cdots \otimes
a_{\sigma(n)}e_{\sigma(n)}$ by the rule mentioned in
\S \ref{convention}
. Here $e_i$ is the canoncial generator of $\bold k[1]$
of degree $-1$.
The element $\sigma$ also acts on $A^{\otimes n}[n]$ 
via the identification
\begin{equation}
\label{bar twist identification}
(A[1])^{\otimes n} \simeq A^{\otimes n}[n]:a_1e_1\otimes \cdots
\otimes a_ne_n \mapsto (a_1\otimes \cdots \otimes a_n) e_1\cdots e_n
=(a_1\otimes \cdots \otimes a_n) e^n.
\end{equation}
For example, let $a_1$ and $a_2$ be elements
of degree $\alpha_1$ and $\alpha_2$ in $A$, respectively. 
Via this identification, we have
$$
a_1e_1\otimes a_2e_2 = (-1)^{\alpha_2}(a_1\otimes a_2)e_1e_2
$$
and the permutation of first and the second components becomes the
following map.
\begin{align*}
&(a_1\otimes a_2)e^2=
(a_1\otimes a_2)e_1e_2=
 (-1)^{\alpha_2}a_1e_1\otimes a_2e_2
\mapsto \\
& (-1)^{\alpha_2+(\alpha_1+1)(\alpha_2+1)}
a_2e_2 \otimes a_1e_1 
=
(-1)^{\alpha_2+(\alpha_1+1)(\alpha_2+1)+\alpha_1}
a_2\otimes a_1e_2e_1 \\
=&
(-1)^{\alpha_1\alpha_2+1}
a_2\otimes a_1e^2.
\end{align*}
Let $A$ be a DGA and $\mu:A\otimes A\to A$ be the multiplication
morphism. The morphism $\mu$ induces the morphism
$A[p]\otimes A[q]\to A[p+q]$ by the composite
defined in \S \ref{convention} (\ref{extention rule}).
If $A$ is graded commutative, then
$\mu + \mu\circ \sigma:A[1]\otimes A[1]\to A[2]$ is the zero map.

\begin{definition}[Homotopy shuffle system]
\label{def of homotopy shuffle system}
Let $A^{\bullet}$ be an associative DGA. A homotopy shuffle system
consists of the system of degree preserving $\bold k$ linear map
$$
h^{l,m}:A^{\otimes l}[l]\otimes A^{\otimes m}[m] \to A[1]
$$
for $l\geq 0,m\geq 0,l+m>0$ with the following properties.
The map $A^{\otimes l}[l+1]\otimes A^{\otimes m}[m] \to A[2]$
and $A^{\otimes l}[l]\otimes A^{\otimes m}[m+1] \to A[2]$
induced by $h^{l,m}$ are also denoted as $h^{l,m}$
using the rule (\ref{extention rule}).

\begin{enumerate}
\item
$h^{1,0}=h^{0,1}=1_A$, and $h^{0,p}=h^{p,0}=0$ for $p>1$.
\item
\label{second axion of homotopy shffle system}
Let $\mu:A\otimes A\to A$ be the multiplication for the DGA.
The induced map $A[1]\otimes A[1]\to A[2]$ is also denoted as
$\mu$. Let $\sigma$ be the switching homomorphism defined in 
(\ref{switching homomorphism}).
Then we have
$$
\mu+\mu\circ\sigma=-dh^{1,1}+h^{1,1}d:A[1]\otimes A[1] \to A[2].
$$
\item
\label{3rd condition for homotopy shuffle}
Let $l+m>2,l>0$ and $m>0$.
We define the following composite linear map $\alpha$.
\begin{align}
\label{homotopy shuffle 1st term}
\alpha: A^{\otimes l}[l]\otimes A^{\otimes m}[m]
&
\overset{\sigma}\longrightarrow
\underset{
\substack{l'+l''=l,\\ 
m'+m''=m,\\ 
l'+ m'\geq 1\\
l''+ m''\geq 1
}}
\bigoplus 
(A^{\otimes l'}[l']\otimes A^{\otimes m'}[m'])\otimes
(A^{\otimes l''}[l'']\otimes A^{\otimes m''}[m'']) \\
&
\nonumber
\overset{\sum (h^{l',m'}\otimes h^{l'',m''})}
\longrightarrow
A[1] \otimes A[1] \\
&
\nonumber
\overset{\mu}\longrightarrow A[2]
\end{align}
Here the morphism $\sigma$ is given by the 
permutation of components. We define the following 
linear maps $\beta$ and $\gamma$.

\begin{align}
\label{homotopy shuffle 2nd term}
\beta:A^{\otimes l}[l]\otimes A^{\otimes m}[m] & 
\overset{-d_B\otimes 1}\longrightarrow
A^{\otimes (l-1)}[l]\otimes A^{\otimes m}[m] \\
\nonumber
& \overset{h^{l-1,m}}\longrightarrow
A[2]
\end{align}
\begin{align}
\label{homotopy shuffle 3rd term}
\gamma:A^{\otimes l}[l]\otimes A^{\otimes m}[m] & 
\overset{-1\otimes d_B}\longrightarrow 
A^{\otimes l}[l]\otimes A^{\otimes (m-1)}[m] \\
\nonumber
& \overset{h^{l,m-1}}\longrightarrow 
A[2]
\end{align}
Then $h^{l,m}$ satisfies the relation:
$$
-dh^{l,m}+h^{l,m}d=\alpha+\beta+\gamma:
A^{\otimes l}[l]\otimes A^{\otimes m}[m]  \longrightarrow
A[2].
$$
\end{enumerate}
\end{definition}

For the augmentation ideal $I$, a homotopy shuffle system 
$h^{l,m}:I^{\otimes l}[l]\otimes I^{\otimes m}[m]\to I[1]$
is defined similarly.
In the rest of this subsection, 
we define the homotopy shuffle product 
$$
B_{red}(A,\epsilon)\otimes B_{red}(A,\epsilon)\to B_{red}(A,\epsilon)
$$
by assuming the existence of a homotopy shuffle system
$\{h^{l,m}\}_{l,m}$ for $I$.
We define the set $S_{l,m}^{(k)}$ by
\begin{align*}
S_{l,m}^{(k)}=& \{
f:[1,l]\cup[\bar 1, \bar m] \to [1,l+m-k]\mid
 (1) \text{ $f$ is surjecitive, } \\
& (2)\  f(i)\leq f(j) \text{ for }i\leq j, \text{ and } 
 (3)\  f(\bar i)\leq f(\bar j) \text{ for }i\leq j 
\}.
\end{align*}
For $f\in S_{l,m}^{(k)}$ and $p\in [1, l+m-k]$, 
we define $S(f,p)=f^{-1}(p)$
and $S'(f,p)=S(f,p)\cap [1,l]$,
$S''(f,p)=S(f,p)\cap [\bar 1,\bar m]$.
The cardinality of $S(f,p),S'(f,p)$ and $S''(f,p)$
is denoted by $s(f,p),s'(f,p)$ and $s''(f,p)$, respectively.
We define a linear map
$H(f):I^{\otimes l}[l]\otimes I^{\otimes m}[m] \to I^{l+m-k}[l+m-k]$ by
the composite

\begin{align*}
&I^{\otimes l}[l]\otimes I^{\otimes m}[m] \\
\overset{\sigma}\to & 
\Big(I^{s'(f,1)}[s'(f,1)]
\otimes I^{s''(f,1)}[s''(f,1)]\Big)
\otimes \cdots  \\
&\otimes 
\Big(I^{s'(f,l+m-k)}[s'(f,l+m-k)]
\otimes I^{s''(f,l+m-k)}[s''(f,l+m-k)]\Big)\\
\overset{
\theta
%h^{s'(f,1),s''(f,1)}\otimes\cdots \otimes 
%h^{s'(f,l+m-k),s''(f,l+m-k)}
}
\longrightarrow & I[1] 
\otimes \cdots \otimes 
I[1] \\
\simeq &
I^{\otimes l+m-k}[l+m-k],
\end{align*}
where $\theta=h^{s'(f,1),s''(f,1)}\otimes\cdots \otimes 
h^{s'(f,l+m-k),s''(f,l+m-k)}$ and
$\sigma$ is the permutation homomorphism.
We define a linear map $H_{l,m,k}$ by
$$
H_{l,m,k}=\sum_{f\in S_{l,m}^{(k)}} H(f):
I^{\otimes l}[l]\otimes I^{\otimes m}[m]\to
I^{\otimes (l+m-k)}[l+m-k].
$$
\begin{proposition}
The linear map
$H=\sum_{l,m,n}H_{l,m,n}$
defines a homomorphism of complexes
$$
B(A,\epsilon) \otimes B(A,\epsilon)\to B(A,\epsilon).
$$
\end{proposition}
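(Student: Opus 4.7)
Write $D = d_A + d_B$ for the total differential on $B_{red}(A, \epsilon)$, where $d_A$ is the internal differential of $A$ extended to tensors by the Leibniz rule and $d_B$ is the bar differential (\ref{definition of reduced bar differential}); on $I^{\otimes p}$ the augmentation summands in $d_B$ vanish because $I = \ker \epsilon$, so only the "merge two adjacent factors" terms remain. It suffices to check on each summand $H_{l,m,k}$ that $D \circ H_{l,m,k} = H_{l,m,k} \circ (D \otimes 1 + 1 \otimes D)$, which I would split into the internal-differential and bar-differential contributions on each side.

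For the internal part, fix $f \in S^{(k)}_{l,m}$. By construction $H(f)$ is, up to the permutation $\sigma$, the tensor product $\bigotimes_{p=1}^{l+m-k} h^{s'(f,p), s''(f,p)}$, so the Koszul Leibniz rule expresses $[d_A, H(f)]$ as a signed sum over $p$ in which the $p$-th tensor factor is replaced by $[d_A, h^{s'(f,p), s''(f,p)}]$. Axiom (\ref{3rd condition for homotopy shuffle}) (or (\ref{second axion of homotopy shffle system}) when both local indices equal $1$) rewrites each such commutator as the sum $\alpha^p_f + \beta^p_f + \gamma^p_f$ of the three distinguished pieces of Definition \ref{def of homotopy shuffle system}.

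The combinatorial matching is the crux. First, (i) each $\alpha^p_f$ locally splits the $p$-th block of $f$ into two adjacent sub-blocks $(l',m')$, $(l'',m'')$ and applies the multiplication $\mu: I[1] \otimes I[1] \to I[2]$; this is precisely the effect of the $p$-th adjacent-merge summand of $d_B$ on the output of $H(\tilde f)$ for the refinement $\tilde f \in S^{(k-1)}_{l,m}$ obtained by inserting a new output value into $f$ between positions $p$ and $p+1$. Summing over $(f,p)$ and over all such refinements therefore reconstructs $-d_B \circ H$ applied to the output. Second, (ii) each $\beta^p_f$ precomposes the $p$-th block with $-d_B$ on its "unbarred" part; the monotonicity of $f$ on $[1,l]$ forces $S'(f,p)$ to be a contiguous interval in $[1,l]$, so merging two of its adjacent elements is one adjacent-merge summand of $d_B \otimes 1$ on $I^{\otimes l}[l] \otimes I^{\otimes m}[m]$, followed by $H(g)$ for the surjection $g \in S^{(k)}_{l-1, m}$ deduced from $f$ after the merge. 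Summing these recovers $H \circ (d_B \otimes 1)$. An identical argument for (iii) the $\gamma^p_f$ contributions matches $H \circ (1 \otimes d_B)$, completing the cancellation.

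The main obstacle is the sign bookkeeping, which interweaves the Koszul signs from the extension rule (\ref{extention rule}) and the shift identification (\ref{bar twist identification}); the signs of the permutation $\sigma$ used in building $H(f)$; the alternating signs $(-1)^{p-i}$ of $d_B$ from (\ref{definition of reduced bar differential}); and the shift conventions of Section \ref{convention}. In particular one must verify that the "$-$" signs inserted in the definitions of $\beta$ and $\gamma$ are exactly those needed to identify a local merge inside the $p$-th block with the corresponding summand of $d_B \otimes 1$ or $1 \otimes d_B$, and that the output multiplication $\mu$ in $\alpha$ carries the sign of the $p$-th summand of $d_B$ on $I^{\otimes(l+m-k+1)}[l+m-k+1]$. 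Once these signs are checked block by block, the three families $\alpha^p_f$, $\beta^p_f$, and $\gamma^p_f$ cancel with $-d_B \circ H$, $H \circ (d_B \otimes 1)$, and $H \circ (1 \otimes d_B)$ respectively, proving that $H$ is a chain map.
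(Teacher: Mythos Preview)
Your proposal is correct and follows essentially the same strategy as the paper: reduce to the identity
\[
d_B\circ H_{l,m,k}-H_{l-1,m,k}\circ(d_B\otimes 1)-H_{l,m-1,k}\circ(1\otimes d_B)
=-d_IH_{l,m,k+1}+H_{l,m,k+1}d_I,
\]
then use the Leibniz rule to localize the inner-differential commutator to one tensor slot and invoke the axioms of Definition~\ref{def of homotopy shuffle system} there, matching the resulting $\alpha,\beta,\gamma$ pieces with the three bar-differential contributions. The only organizational difference is that the paper indexes the combinatorics by fixing the coarser surjection $\bar f\in S^{(k+1)}_{l,m}$ and collecting its refinements in $S^{(k)}_{l,m}(\bar f)$, $T^{(k)}_{l-1,m}(\bar f)$, ${T'}^{(k)}_{l,m-1}(\bar f)$, whereas you fix $f$ and pass to its refinements $\tilde f$; these are the two sides of the same bijection, and neither the paper nor your proposal writes out the full sign verification.
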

\begin{proof}
The differential of the bar complex is the sum of inner differential
$d_I:I^{\otimes n}[n]\to I^{\otimes n}[n+1]$ and bar differential
$d_B:I^{\otimes n}[n]\to I^{\otimes (n-1)}[n]$, where
\begin{align*}
&d_I=\sum_{i=1}^nd_{I,i}, \quad
d_{I,i}=1\otimes \cdots \otimes d\otimes \cdots \otimes 1, \\
&d_B=\sum_{i=1}^{n-1}d_{B,i},\quad
d_{B,i}(x_1\otimes \cdots \otimes x_n)=
x_1\otimes \cdots \otimes (x_i\cup x_{i+1})\otimes \cdots \otimes x_n.
\end{align*}
Here we used the identification (\ref{bar twist identification})
and the definition of sign (\ref{definition of reduced bar differential}).

We consider the following diagram
$$
\begin{matrix}
I^{\otimes l}[l]\otimes I^{\otimes m}[m] & 
\overset{H_{l,m,k}}\longrightarrow & I^{\otimes (l+m-k)}[l+m-k] \\
d_B\otimes 1 + 1\otimes d_B
\downarrow & & \downarrow d_B \\
I^{\otimes (l-1)}[l]\otimes I^{\otimes m}[m]\oplus
I^{\otimes l}[l]\otimes I^{\otimes (m-1)}[m] & 
\overset{H_{l-1,m,k}+H_{l,m-1,k}}\longrightarrow &
I^{\otimes (l+m-k-1)}[l+m-k].
\end{matrix}
$$
It is enough to show that
\begin{align}
\label{homotopy shuffle l,m,k part}
&d_B\circ H_{l,m,n}-
H_{l-1,m,k}
\circ
(d_B\otimes 1) -H_{l,m-1,k}\circ(1\otimes d_B) \\
\nonumber
=&
-d_IH_{l,m,k+1}+H_{l,m,k+1}d_I.
\end{align}
Let $\bar f\in S_{l,m}^{(k+1)}$. 
For $i\in [1, l+m-k-1]$, we define a surjective 
non decreasing map 
$\sigma_i:[l+m-k]\to [l+m-k-1]$
by setting $\sigma_i(i)=\sigma_i(i+1)$.
We set 
$$
S_{l,m}^{(k)}(\bar f)=
\{ (i, f)\in [1,l+m-k-1]\times S_{l,m}^{(k)} \mid
\sigma_i\circ f=\bar f\}.
$$
We define $T_{l-1,m}^{(k)}(\bar f)$ and ${T'}_{l,m-1}^{(k)}(\bar f)$ 
by
\begin{align*}
& T_{l-1,m}^{(k)}(\bar f)=\{
(j,g)\in [1,l-1]\times S_{l-1,m}^{(k)}\mid
g \circ (\sigma_j\times 1)=\bar f
\}, \\
& {T'}_{l-1,m}^{(k)}(\bar f)=\{
(j',g')\in [\overline{1},\overline{m-1}]\times S_{l,m-1}^{(k)}\mid
g' \circ (1\times {\sigma}_{j'})=\bar f
\}.
\end{align*}
To show the equality (\ref{homotopy shuffle l,m,k part}),
it is enough to show the following identity
by considering $\bar f$-component.
\begin{align}
\label{bar f part}
&\sum_{(i,f)\in S_{l,m}^{(k)}(\bar f)}d_{B,i}\circ H(f) \\
\nonumber
&-
\sum_{(j,g)\in T_{l-1,m}^{(k)}(\bar f)}H(g)
\circ
(d_{B,j} \otimes 1)
-
\sum_{(j',g')\in {T'}_{l,m-1}^{(k)}(\bar f)}
H(g')\circ (1\otimes d_{B,j'}) \\
\nonumber
=&
-d_IH(\bar f)+H(\bar f)d_I
\end{align}
We fix $\bar f$ and $i$. The set
\begin{align*}
S_{l,m}^{(k)}(\bar f,i)=&
\{f\mid (i,f)\in S(\bar f)\} \\
=& \{(P',P'')\mid 
 \text{$P'\coprod P''=S(\bar f,i)$ is a non empty partition} \\
& \text{such that $p'<p''$ for $p'\in P'$ and $p''\in P''$}
\} 
\end{align*}
and
\begin{align*}
& T_{l-1,m}^{(k)}(\bar f,i)=\{
(j,g)\in [1,l-1]\times S_{l-1,m}^{(k)}\mid
g \circ (\sigma_j\times 1)=\bar f,\bar f(j)=i)
\}, \\
& {T'}_{l-1,m}^{(k)}(\bar f,i)=\{
(j',g')\in [\overline{1},\overline{m-1}]\times S_{l,m-1}^{(k)}\mid
g' \circ (1\times {\sigma}_{j'})=\bar f,\bar f(j')=i)
\}. 
\end{align*}
Then by the condition for homotopy suffle system, we have
\begin{align*}
&\sum_{(i,f)\in S_{l,m}^{(k)}(\bar f,i)}d_{B,i}\circ H(f) \\
&-
\sum_{(j,g)\in T_{l-1,m}^{(k)}(\bar f,i)}H(g)
\circ
(d_{B,j} \otimes 1)
-
\sum_{(j',g')\in {T'}_{l,m-1}^{(k)}(\bar f,i)}
H(g')\circ (1\otimes d_{B,j'}) \\
=&
-d_{I,i}H(\bar f)+\sum_{j\in S(\bar f,i)}H(\bar f)d_{I,j}.
\end{align*}
By taking the summation over $i$, we have the equality 
(\ref{bar f part}).
\end{proof}

\subsection{Construction of homotopy shuffle system}

In this section, we construct a homotopy shuffle system for a total DGA of
a strict cosimplicial graded commutative DGA's
over the set $I=[0,n]$. If $n=0$, since the total
DGA $t(A)=A_0$ is graded commutative, 
we define 
$h^{l,m}=0$ for $l+m>1$. 

We define $h^{l,m}$ by the induction on $n$.

\subsubsection{The case for $n=1$.}
The total DGA $A$ is written as $A_0\oplus A_1 \oplus A_{0,1}[-1]$
and the multiplication can be written as
\begin{align*}
&(a_0+a_1+a_{01})\cup (b_0+b_1+b_{01})\\=&
(a_0\cdot b_0) + (a_1\cdot b_1) + 
(i_0(a_0)\cdot b_{01}+a_{01}\cdot i_1(b_1)) \\
=&(a_0\cdot b_0) + (a_1\cdot b_1) + 
(i_0(a_0)\cdot (b_{01}e)+(-1)^{\deg(b_1)}(a_{01}e)\cdot i_1(b_1))e^{-1}.
\end{align*}
for $a_0,b_0 \in A_0,a_1,b_1 \in A_1,a_{01},
b_{01} \in A_{01}[-1]$.
Here we used 
the DGA homomorphism $i_0:A_0 \to A_{01}, i_1:A_1\to A_{01}$
and
``$\cdot$'' for the multiplication of $A_0,A_1$ and
$A_{01}$.

We set $h^{1,1}(a\otimes b)
=-(a_{01}e)\cdot (b_{01}e)\in A_{01}\subset A[1]$ 
for elements $a=(a_0+a_1+a_{01})e$ and
$b=(b_0+b_1+b_{01})e$ in $A[1]$,
and
$h^{l,m}=0$ for $l+m>2$.
We check the conditions for a homotopy shuffle system.
To avoid the sign complexity, we only check for
degree zero elements.
\begin{proposition}
Let $x,y$ be degree zero elements in $A[1]$.
We have the following formula.
\begin{align}
\label{step1 1st}
& x\cup y+y\cup x
=-d_Ah^{1,1}(x\otimes y)+h^{1,1}(d_{A\otimes A}(x\otimes y)) \\
\label{step1 2nd}
&h^{1,1}((x_1\cup x_2)\otimes y)=
x_1\cup h^{1,1}(x_2\otimes y)+
h^{1,1}(x_1\otimes y)\cup x_2 \\
\label{step1 3rd}
&h^{1,1}(x\otimes (y_1\cup y_2))=
y_1\cup h^{1,1}(x\otimes y_1)+
h^{1,1}(x\otimes y_1)\cup y_2
\end{align}
\end{proposition}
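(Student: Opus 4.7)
All three identities are verified by direct component-wise computation, using the decomposition $A = A_0 \oplus A_1 \oplus A_{01}[-1]$, the explicit cup product formula recalled just before the proposition, the definition $h^{1,1}(a \otimes b) = -(a_{01}e)(b_{01}e)$, graded commutativity of each $A_i$, and the fact that $f_0$ and $f_1$ are DGA homomorphisms. First I would unpack the notation: a degree-zero element $x \in A[1]$ corresponds to $\tilde x = x_0 + x_1 + x_{01} e^{-1} \in A^1$ with $x_0 \in A_0^1$, $x_1 \in A_1^1$, $x_{01} \in A_{01}^0$, so that $h^{1,1}(x \otimes y)$ is represented by $-x_{01} y_{01} \cdot e^{-1}$, while the $A_{01}^1$-component of $d_A \tilde x$ is $f_1(x_1) - f_0(x_0) + dx_{01}$.

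For identity (\ref{step1 1st}) I would expand $x \cup y$ and $y \cup x$ using the cup product formula: the $A_0^2$ and $A_1^2$ pieces cancel by graded commutativity (the relevant components all have degree one), leaving a sum of four mixed terms in $A_{01}^1 \cdot e^{-1}$. For the right-hand side, Leibniz in $A_{01}$ gives $-d_A h^{1,1}(x \otimes y) = (dx_{01}\, y_{01} + x_{01}\, dy_{01}) e^{-1}$, while $h^{1,1}(d_{A\otimes A}(x \otimes y)) = h^{1,1}(dx \otimes y) + h^{1,1}(x \otimes dy)$ extracts the ``01''-component of $d_A\tilde x$ or $d_A\tilde y$ via the extended $h^{1,1}$ formula. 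Summing these, the internal-differential contributions $dx_{01}\, y_{01}$ and $x_{01}\, dy_{01}$ cancel pairwise, and the remaining $f_0$- and $f_1$-terms (reordered as needed using graded commutativity of $A_{01}$) assemble to match the LHS.

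For identities (\ref{step1 2nd}) and (\ref{step1 3rd}) no differentials appear, and the calculation reduces to pure multiplication bookkeeping. For (\ref{step1 2nd}) I compute the ``01''-component of $x_1 \cup x_2$ from the cup product formula, obtaining a sum of an $f_0$-term and an $f_1$-term, and then multiply by $y_{01}$. On the right, $h^{1,1}(x_2 \otimes y)$ and $h^{1,1}(x_1 \otimes y)$ are pure ``01''-type elements, so the cup products $x_1 \cup h^{1,1}(x_2 \otimes y)$ and $h^{1,1}(x_1 \otimes y) \cup x_2$ each reduce to a single surviving term by the cup product formula --- namely $f_0(x_{1,0})(x_{2,01} y_{01})$ and $(x_{1,01} y_{01}) f_1(x_{2,1})$ respectively --- and associativity in $A_{01}$ then completes the match. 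Identity (\ref{step1 3rd}) is symmetric and is handled in exactly the same way.

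The main obstacle is sign bookkeeping. The shift $e^{-1}$ introduces Koszul signs each time it is commuted past a homogeneous element, and one must maintain consistency among the several sign conventions at play: the cup product formula, the total differential $D = d + \delta \cdot t$, the tensor product differential, and the extension rule of \S\ref{convention}. In particular, commuting $e^{-1}$ past a degree-one element in $A_1$ produces a factor of $-1$, so the $a_{01} f_1(b_1) e^{-1}$ term of the cup product, interpreted in the correctly signed graded-algebra sense for degree-one inputs, carries the Koszul sign needed for the $f_1$-contributions on the two sides of (\ref{step1 1st}) to agree. Because the proposition restricts to degree-zero inputs in $A[1]$, most other Koszul signs collapse to $+1$ in predictable ways, which is presumably why the author isolates this low-degree case as a first step before attempting the general higher-degree verification.
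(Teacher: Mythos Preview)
Your proposal is correct and follows essentially the same route as the paper's own proof: a direct component-wise verification using the decomposition $A = A_0 \oplus A_1 \oplus A_{01}[-1]$, graded commutativity of each $A_i$ to kill the $A_0$- and $A_1$-contributions in (\ref{step1 1st}), and then matching the surviving $A_{01}$-terms via the explicit cup product formula and the definition of $h^{1,1}$. The paper's computation is written in the $(i_0,i_1)$ notation rather than your $(f_0,f_1)$, and its Čech sign convention gives $i_0(x_0)-i_1(x_1)$ rather than your $f_1(x_1)-f_0(x_0)$, but this is only a convention difference and the structure of the argument is identical, including the reduction of (\ref{step1 2nd}) and (\ref{step1 3rd}) to associativity in $A_{01}$.
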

\begin{proof}
We set $x=(x_0+x_1+x_{01})e$ and $y=(y_0+y_1+y_{01})e$.
Since $A_0,A_1$ are graded commutative, to prove the equality 
(\ref{step1 1st}), it is enough to show the following equality.
$$
-\Big[(i_0(x_0)\cdot y_{01}+x_{01}\cdot i_1(y_1))
+(i_0(y_0)\cdot x_{01}+y_{01}\cdot i_1(x_1))\Big]e^2
=(-d_Ah^{1,1}+h^{1,1}d_{A\otimes A})(x\otimes y)
$$
The left hand side is equal to
$$
-\Big[i_0(x_0)\cdot y_{01}e-x_{01}e\cdot i_1(y_1)
+i_0(y_0)\cdot x_{01}e-y_{01}e\cdot i_1(x_1)\Big]e.
$$
Let $d_0,d_1$ and $d_{01}$ be the differentials of
$A_0,A_1$ and $A_{01}$, respectively.
Using expression of (\ref{differnetial as degree zero map}), 
we have
$d_A(x)=(d_0x_0+d_1x_1+d_{01}x_{01})e^2+(i_0x_0-i_1x_1)e$ and
by the definition of extension rule defined in (\ref{extention rule}),
the right hand side of the 
above equality is equal to 
\begin{align*}
& 
d_{01}(x_{01}e\cdot y_{01}e)e \\
&+h^{1,1}(((d_0x_0+d_1x_1+d_{01}x_{01})e^2+(i_0(x_0)-i_1(x_1))e)
\otimes
(y_0+y_1+y_{01})e) \\
&+h^{1,1}(
(x_0+x_1+x_{01})e\otimes
((d_0y_0+d_1y_1+d_{01}y_{01})e^2+(i_0(y_0)-i_1(y_1))e) \\
=
&
\big[(i_1(x_1)-i_0(x_0))\cdot (y_{01}e)\Big]e+\Big[(x_{01}e)\cdot
(i_1(y_1)-i_0(y_0))\Big] e \\
=
&
\big[y_{01}e\cdot i_1(x_1)-i_0(x_0)\cdot y_{01}e-i_0(y_0)\cdot(x_{01}e)
+x_{01}e\cdot i_1(y_1)\Big] e
\end{align*}
Thus we have (\ref{step1 1st}).

As for the equality (\ref{step1 2nd}), we have
$$
h^{1,1}((x^{(1)}\cup x^{(2)})\otimes y)=
-(i_0(x^{(1)}_0)\cdot x^{(2)}_{01}+ x^{(1)}_{01}\cdot i_1(x^{(2)}_1))
\cdot y_{01}
$$
and
\begin{align*}
&x^{(1)}\cup h^{1,1}(x^{(2)}\otimes y)+ 
h^{1,1}(x^{(1)}\otimes y) \cup x^{(2)} \\
=&
-i_0(x^{(1)}_0)\cdot x^{(2)}_{01}\cdot y_{01}
-x^{(1)}_{01}\cdot y_{01}\cdot i_1(x^{(2)}_{1}).
\end{align*}
The equality (\ref{step1 2nd}) is similar.
\end{proof}
We will check the condition of 
Definition \ref{def of homotopy shuffle system}
(\ref{3rd condition for homotopy shuffle})
for the homotopy shuffle system.
In the case of $l+m=3$, the condition is nothing by the equality
(\ref{step1 2nd}), (\ref{step1 3rd}). If $l+m>3$, then
the homomorphisms $\alpha$ is equal to zero. 
In fact, unless
$l'=l''=m'=m''=1$, 
then either $l',l'',m'$ or $m''$ is greater than one.
The term for $l'=l''=m'=m''=1$ is also zero by the relation
$$
h^{1,1}(x^{(1)}\otimes y^{(1)})\cup
h^{1,1}(x^{(2)}\otimes y^{(2)})=0.
$$
$\beta$ and $\gamma$ are also zero.

\subsubsection{The case for $n>1$}
Let $B$ is a commutative DGA and $A_{\bullet}$ a strict cosimplicial 
graded commutative DGA.
The map $A_{k}\to A_{jk}$ is denoted as $i_{jk,k}$.
A set of homomorphisms of DGA's $i_k:B\to A_{k}$ is called central 
if $i_{jk,k}\circ i_k=i_{jk,j}\circ i_j$.
The map $H:\Cal A\otimes \Cal A \to \Cal A[-1]$ is called central if
for any central homomomorphism $i:B\to \Cal A$,
$H(i(b)\otimes a)=H(a\otimes i(b))=0$.

We construct $h^{l,m}$ by the induction of $n$.
Let $A_{\bullet}$ be a strict cosimplicial of graded commutative
DGA. Let $\Cal A_0$ be the strict Cech complex of $\{A_I\}_{I\subset [1,n-1]}$.
It is easy to see that the system 
$\{A_{I\cup n}\}_{I\subset [1,n-1]}$ also becomes a strict cosimplicial 
graded commutative DGA and the homomorphism $A_I \to A_{I\cup n}$
defines a homomorphism of strict cosimplicial DGA. The Cech complex of
$\{A_{I\cup n}\}_{I\subset [1,n-1]}$ is denoted as $\Cal A_{01}$
and the homomorphism $\Cal A_0\to \Cal A_{01}$ is denoted as $i_0$.
We set $\Cal A_1=A_n$.
The homomorphisms $A_n \to A_{I\cup n}$ defines a homomorphism of
DAG $\Cal A_1 \to \Cal A_{01}$, which is denoted as $i_1$.
Thus we have a strict cosimplicial DGA
$$
\Cal A_0\oplus \Cal A_1 \to \Cal A_{01}.
$$
The total DGA $\Cal A=\Cal A_0\oplus \Cal A_0\oplus \Cal A_{01}[-1]$ 
of the above diagram is equal to 
the Cech DGA of $A_{\bullet}$.
By the inductive construction, we already have $\bold k$ linear maps
$$
h^{l,m}_{\star}=h^{l,m}_{\Cal A_{\star}}:
\Cal A_{\star}^{\otimes l}[l]\otimes
\Cal A_{\star}^{\otimes m}[m]\to
\Cal A_{\star}[1]
$$
for $\star=0,1,01$ compatible with $i_0$ and $i_1$. 
(Note that $h^{1,1}_1=0$, since it is strict
cosimplicial on the set $\{n\}$.)
We construct 
central $\bold k$ linear map
$$
h^{l,m}_{\Cal A}:\Cal A^{\otimes l}[l]\otimes \Cal A^{\otimes m}[m]\to 
\Cal A[1].
$$

{\bf The case $(l,m)=(1,1)$.}
We construct $h^{1,1}_{\Cal A}$
so that the relation Definition \ref{def of homotopy shuffle system}
(\ref{second axion of homotopy shffle system}) holds.
Let $x=(x_0+x_1+x_{01})e,y=(y_0+y_1+y_{01})e$ be elements in $\Cal A[1]$, where
$x_{0},y_{0}\in \Cal A_{0}$, $x_{1},y_{1}\in \Cal A_{1}$ 
and $x_{01},y_{01}\in \Cal A_{01}[-1]$.
Since the map of $i_1:A_n\to (\Cal A_{01})_k=A_{kn}$ is central
and $\Cal A_1$ is commutative,
We set $h_{\Cal A}^{1,1}:\Cal A[1]\otimes \Cal A[1]\to \Cal A[1]$ by
$$
h^{1,1}_{\Cal A}(x\otimes y)=h^{1,1}_0(x_0e\otimes y_0e)
-h^{1,1}_{01}(x_{01}e\otimes i_0(y_0)e)-(x_{01}e)\cdot (y_{01}e),
$$
and will show the equality:
$$
x\cup y+y\cup x =-d_{\Cal A}h^{1,1}_{\Cal A}(x\otimes y)+h^{1,1}_{\Cal A}
(d_{\Cal A\otimes \Cal A}(x\otimes y)).
$$
For simplicity, we assume that the degree of $x$ and $y$ is zero.
we have
\begin{align*}
&x\cup y+y\cup x \\
=& (x_0+x_1+x_{01})e\cup(y_0+y_1+y_{01})e+
(y_0+y_1+y_{01})e\cup(x_0+x_1+x_{01})e \\
=& -(x_0\cdot y_0+y_0\cdot x_0)e^2
+(-i_0(x_0)\cdot y_{01}e^2+x_{01}e\cdot i_1(y_1)e)\\
&+(-i_0(y_0)\cdot x_{01}e^2+y_{01}e\cdot i_1(x_1)e) \\
=& -(x_0\cdot y_0+y_0\cdot x_0)e^2
+(-i_0(x_0)+i_1(x_1))\cdot y_{01}e^2
-x_{01}e\cdot(i_0(y_0)-i_1(y_1))e \\
&+(x_{01}e\cdot i_0(y_0)e-i_0(y_0)\cdot x_{01}e^2)
\end{align*}
On the other hand,
\begin{align*}
d_{\Cal A\otimes \Cal A}(x\otimes y)=&
((d_0x_0+d_1x_1+d_{01}x_{01})e^2+(i_0(x_0)-i_1(x_1))e)
\otimes (y_0+y_1+y_{01})e \\
&+
((x_0+x_1+x_{01})e\otimes
((d_0y_0+d_1y_1+d_{01}y_{01})e^2+(i_0(y_0)-i_1(y_1))e).
\end{align*}
We have
\begin{align*}
&-d_{\Cal A}h^{1,1}_{\Cal A}(x\otimes y)
+h^{1,1}_{\Cal A}(d_{\Cal A\otimes \Cal A}(x\otimes y)) \\
=&-d_0h^{1,1}_{0}(x_0e\otimes y_0e)e-i_0(h^{1,1}_{0}(x_0e\otimes y_0e))
+d_{01}h^{1,1}_{01}(x_{01}e\otimes i_0(y_0)e)e
+d_{01}(x_{01}e\cdot y_{01}e)e \\
&+h^{1,1}_0(d_0(x_0e\otimes y_0e))e
-h^{1,1}_{01}((d_{01}x_{01}e^2+(i_0(x_0)-i_1(x_1))e)\otimes i_0(y_0)e) \\
&-h^{1,1}_{01}(x_{01}e\otimes i_0(d_0y_0)e^2) \\
&+(i_1(x_1)-i_0(x_0))\cdot y_{01}e^2
-(x_{01}e)\cdot(i_0(y_0)-i_1(y_1))e+d_{01}(x_{01}e\cdot y_{01}e)e 
\end{align*}
using the central assmption
$h^{1,1}_{01}(i_1(x_1)\otimes i_0(y_0))=0$ and
inductive hypotheses. Therefore the right hand side is equal to
\begin{align*}
& -(x_0\cdot y_0+y_0\cdot x_0)e^2
+(-i_0(x_0)+i_1(x_1))\cdot y_{01}e^2
-x_{01}e\cdot(i_0(y_0)-i_1(y_1))e \\
&+(x_{01}e\cdot i_0(y_0)e-i_0(y_0)\cdot x_{01}e^2).
\end{align*}
It is easy to see that $h^{1,1}_{\Cal A}$ is also central.

{\bf The case $(l,m)=(1,m)$ for $m>1$.}

Assume that $h^{(1,m)}_{\star}:
\Cal A_{\star}[1]\otimes \Cal (A_{\star}[1])^{\otimes m}\to
\Cal A_{\star}[1]$ is defined for 
$\star=0,1,01$.
Let $(x=x_0+x_1+x_{01})e, y^{(i)}=(y^{(i)}_{0}+y^{(i)}_{1}+y^{(i)}_{01})e$
be elements in $\Cal A[1]$,
where $x_{0},y^{(i)}_{0}\in \Cal A_{0}$,
$x_{1},y^{(i)}_{1}\in \Cal A_{1}$ and
$x_{01},y^{(i)}_{01}\in \Cal A_{01}[-1]$.
We define 
$h^{1,m}:\Cal A[1]\otimes (\Cal A[1])^{\otimes m}\to
\Cal A[1]$ by
\begin{align*}
&h^{1,m}(x\otimes y^{(1)}\otimes \cdots \otimes y^{(m)})\\
=&
h^{1,m}_0(x_0e\otimes y^{(1)}_0e\otimes \cdots \otimes y^{(m)}_0e)
-h^{1,m}_{01}(x_{01}e\otimes i_0(y^{(1)}_0)e\otimes \cdots \otimes 
i_0(y^{(m)}_0)e) \\
&-
\Big[h^{1,m-1}_{01}(x_{01}e\otimes i_0(y^{(1)}_0)e
\otimes \cdots \otimes 
i_0(y^{(m-1)}_0)e)\Big]\cdot (y_{01}^{(m)}e). \\
\end{align*}
The condition 
$$
\alpha+\beta+\gamma=
-d_{\Cal A[1]}h^{1,m}
+h^{1,m}d_{\Cal A[1]\otimes \Cal A[1]^{\otimes m}}
$$
in Definition \ref{def of homotopy shuffle system}
(\ref{3rd condition for homotopy shuffle})
is equal to
\begin{align}
\label{goal to show}
& (-1)^{\deg(x)\deg(y^{(1)})}y^{(1)}\cup h^{1,m-1}(x\otimes y^{(2)}
\otimes \cdots \otimes y^{(m)}) \\
\nonumber
&+
h^{1,m-1}(x\otimes y^{(1)}\otimes \cdots 
\otimes y^{(m-1)})\cup y^{(m)} \\
\nonumber
& -h^{1,m-1}(x\otimes (y^{(1)}\cup y^{(2)})
\otimes \cdots \otimes y^{(m)})-\cdots \\
\nonumber
&-
h^{1,m-1}(x\otimes y^{(1)}\otimes \cdots \otimes 
(y^{(m-1)}\cup y^{(m)})) \\
\nonumber
=&
(-d_{\Cal A}h^{1,m}
+h^{1,m}d_{\Cal A\otimes \Cal A^{\otimes m}})
(x\otimes y^{(1)}\otimes \cdots \otimes y^{(m)}).
\end{align}
We show this euqality by assuming that the degrees of $x$ and $y^{(i)}$
are zero.
The terms of the left hand side of (\ref{goal to show})
are as follows. 
\begin{align}
&y^{(1)}\cup h^{1,m-1}(x\otimes y^{(2)}\otimes 
\cdots \otimes y^{(m)}) \\
\nonumber
=&
y^{(1)}_0e\cdot h^{1,m-1}_0(x_0e\otimes y^{(2)}_0e\otimes 
\cdots \otimes y^{(m)}_0e) \\
\nonumber
&-i_0(y^{(1)}_0)e\cdot
h^{1,m-1}_{01}(x_{01}e\otimes i_0(y^{(2)}_0)e\otimes \cdots \otimes 
i_0(y^{(m)}_0)e) \\
\nonumber
&-
i_0(y^{(1)}_0)e\cdot
\Big[ h^{1,m-2}_{01}(x_{01}e\otimes i_0(y^{(2)}_0)e\otimes \cdots \otimes 
i_0(y^{(m-1)}_0)e)\cdot y_{01}^{(m)}e\Big],
\end{align}
\begin{align}
&h^{1,m-1}(x\otimes y^{(1)}\otimes \cdots \otimes y^{(m-1)})\cup y^{(m)} \\
\nonumber
=&
h^{1,m-1}_0(x_0e\otimes y^{(1)}_0e\otimes \cdots \otimes y^{(m-1)}_0e)
\cdot y^{(m)}_0e \\
\nonumber
&+
i_0(h^{1,m-1}_0(x_0e\otimes y^{(1)}_0e\otimes \cdots \otimes y^{(m-1)}_0e))
\cdot y^{(m)}_{01}e \\
\nonumber
&-
h^{1,m-1}_{01}(x_{01}e\otimes i_0(y^{(1)}_0)e\otimes \cdots \otimes 
i_0(y^{(m-1)}_0)e)\cdot i_1(y^{(m)}_1)e \\
\nonumber
&-
h^{1,m-2}_{01}(x_{01}e\otimes i_0(y^{(1)}_0)e\otimes \cdots \otimes 
i_0(y^{(m-2)}_0)e)\cdot y_{01}^{(m-1)}e\cdot i_1(y^{(m)}_1)e,
\end{align}
\begin{align}
&-h^{1,m-1}(x\otimes (y^{(1)}\cup y^{(2)})
\otimes \cdots \otimes y^{(m)})-\cdots \\
\nonumber
&-
h^{1,m-1}(x\otimes y^{(1)}\otimes \cdots \otimes 
(y^{(m-1)}\cup y^{(m)})) \\
\nonumber
=&-h^{1,m-1}_0(x_0e\otimes (y^{(1)}_0e\cdot y^{(2)}_0e)
\otimes \cdots \otimes y^{(m)}_0e)- \dots\\
\nonumber
&-
h^{1,m-1}_0(x_0e\otimes y^{(1)}_0e\otimes \cdots \otimes 
(y^{(m-1)}_0e\cdot y^{(m)}_0e)) \\
\nonumber
&+h^{1,m-1}_{01}(x_{01}e\otimes i_0(y^{(1)}_0e\cdot y^{(2)}_0e)
\otimes \cdots \otimes i_0(y^{(m)}_0)e)- \dots \\
\nonumber
&+h^{1,m-1}_{01}(x_{01}e\otimes i_0(y^{(1)}_0)e\otimes \cdots \otimes 
i_0(y^{(m-1)}_0e\cdot y^{(m)}_0e)) \\
\nonumber
&+h^{1,m-2}_{01}(x_{01}e\otimes i_0(y^{(1)}_0e\cdot y^{(2)}_0e)\otimes \cdots 
\otimes i_0(y^{(m-1)}_0)e)\cdot  y^{(m)}_{01}e- \dots\\
\nonumber
&+h^{1,m-2}_{01}(x_{01}e\otimes i_0(y^{(1)}_0)e\otimes \cdots 
\otimes i_0(y^{(m-2)}_0e\cdot y^{(m-1)}_0e))\cdot  y^{(m)}_{01}e\\
\nonumber
&
+h^{1,m-2}_{01}(x_{01}e\otimes i_0(y^{(1)}_0)e\otimes \cdots \otimes 
i_0(y^{(m-2)}_0)e) \\
\nonumber
&\quad \cdot
(i_0(y^{(m-1)}_0)e\cdot y^{(m)}_{01}e+y^{(m-1)}_{01}e
\cdot i_1(y^{(m)}_{1})e).
\end{align}
By the inductive hypotheses for $h^{(1,m)}_0,h^{(1,m)}_{01}$,
the left hand side of (\ref{goal to show}) is equal to
\begin{align*}
&(-d_0h^{1,m}_0+h^{1,m}_0d_0)(x_0e\otimes y^{(1)}_0e\otimes \cdots \otimes
y^{(m)}_0e) \\
&+(d_{01}h^{1,m}_{01}-h^{1,m}_{01}d_{01})
(x_{01}e\otimes i_0(y^{(1)}_0)e\otimes \cdots \otimes
i_0(y^{(m)}_0)e) \\
&+h^{1,m-1}_{01}(x_{01}e\otimes i_0(y^{(1)}_0)e
\otimes \cdots \otimes 
i_0(y^{(m-1)}_0)e)\cdot (i_1(y^{(m)}_1)e-i_0(y^{(m)}_0)e) \\
&+
(-d_{01}h^{1,m-1}_{01}+h^{1,m-1}_{01}d_{01})
(x_{01}e\otimes i_0(y^{(1)}_0)e\otimes \cdots \otimes 
i_0(y^{(m-1)}_0)e)\cdot y^{(m)}_{01}e \\
&-
h^{1,m-1}_{01}
((i_0(x_{0})e-i_1(x_1)e)\otimes i_0(y^{(1)}_0)e
\otimes \cdots \otimes 
i_0(y^{(m-1)}_0)e)\cdot y^{(m)}_{01}e 
\end{align*}
and this is equal to the right hand side of (\ref{goal to show}).

By the induction on $n$, we can prove the following proposition.
\begin{proposition}
\label{central linearity}
Let $\Cal A$ be the total DGA of a strict cosimplicial graded commutative
DGA's and $h^{(1,m)}$ be the homomorphism defined as above.
Let $B$ be a graded commutative DGA's and $j_k:B\to A_k$ be a family
of central morphisms of DGA's. Let $j:B\to \Cal A$ be the induced
morphisms of DGA's. Then we have
$$
h^{(1,m)}((x\cdot j(x'))\otimes y_1\otimes \cdots \otimes y_m).
=(-1)^{\deg j(x')\sum_k\deg y_k}
h^{(1,m)}(x\otimes y_1\otimes \cdots \otimes y_m)\cdot j(x').
$$
for $x,y_1, \dots, y_m \in \Cal A[1]$ and $x'\in B$.
\end{proposition}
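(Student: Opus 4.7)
The plan is to prove Proposition \ref{central linearity} by induction on $n$, the parameter of the recursive construction of $h^{(1,m)}_{\Cal A}$. The base case $n=0$ is immediate: then $\Cal A=A_0$ is graded commutative, so by definition $h^{(1,m)}=0$ for every $m\geq 1$ and both sides of the asserted identity vanish.

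For the inductive step, I use the decomposition $\Cal A=\Cal A_0\oplus \Cal A_1\oplus \Cal A_{01}[-1]$ of the recursive construction; by the inductive hypothesis the proposition already holds for $h^{(1,\bullet)}_0$, $h^{(1,\bullet)}_{01}$ and $h^{(1,m-1)}_{01}$. A central family $\{j_k\colon B\to A_k\}_{k\in[0,n]}$ produces a central $j^0\colon B\to\Cal A_0$ by restriction, $j^1=j_n\colon B\to\Cal A_1$, and a central $j^{01}\colon B\to\Cal A_{01}$ by composition with the face maps; the cocycle condition forces $i_0\circ j^0=j^{01}=i_1\circ j^1$, so the $\Cal A_{01}[-1]$-component of $j(x')$ vanishes. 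Applying the multiplication rule $a\cup b=a_0b_0+a_1b_1+i_0(a_0)\,b_{01}+a_{01}\,i_1(b_1)$, one reads off
$$
(x\cdot j(x'))_0=x_0\cdot j^0(x'),\quad (x\cdot j(x'))_1=x_1\cdot j^1(x'),\quad (x\cdot j(x'))_{01}=x_{01}\cdot j^{01}(x'),
$$
up to the Koszul signs from passing $j(x')$ through the shift $[1]$.

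Substituting into the defining formula for $h^{(1,m)}_{\Cal A}$, each of the three summands contains a central factor $j^{\ast}(x')$ ($\ast\in\{0,01\}$) multiplying $x_{\ast}$ on its first tensor slot; the inductive hypothesis then pulls that factor out to the right of each summand with the expected sign. The first two summands directly assemble into the first two summands of $h^{(1,m)}_{\Cal A}(x\otimes y^{(1)}\otimes\cdots\otimes y^{(m)})\cdot j(x')$. For the third, associativity gives $h^{(1,m-1)}_{01}(\cdots)\cdot\bigl[j^{01}(x')\cdot(y^{(m)}_{01}e)\bigr]$, and one must commute $j^{01}(x')$ past $y^{(m)}_{01}e$ with the expected Koszul sign. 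Although $\Cal A_{01}$ is not graded commutative, this commutation is valid for central elements: a central element of $\Cal A_{01}$ lies in Cech-degree zero with identical images under all face maps, so its Alexander-Whitney product with any element reduces to a multiplication inside a graded commutative constituent $A_{I\cup n}$, where factors may be exchanged up to the usual Koszul sign.

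The main obstacle is sign bookkeeping: the shifts $[1]$, the permutations built into the recursive formula for $h^{(1,m)}_{\Cal A}$, and the three separate applications of the inductive hypothesis each introduce their own Koszul signs that must telescope into the single overall factor $(-1)^{\deg j(x')\sum_k\deg y_k}$. A mild conceptual point is the central-element commutativity in $\Cal A_{01}$ used above, which could alternatively be strengthened into the inductive hypothesis by proving the centrality and the linearity property simultaneously.
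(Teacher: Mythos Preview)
Your proposal is correct and follows precisely the approach the paper indicates. The paper's own ``proof'' is just the single line ``By the induction on $n$, we can prove the following proposition'', so your write-up in fact supplies the details the paper omits: the recursive decomposition $\Cal A=\Cal A_0\oplus\Cal A_1\oplus\Cal A_{01}[-1]$, the observation that $j(x')$ has vanishing $\Cal A_{01}[-1]$-component, the use of the inductive hypothesis on each summand of the defining formula for $h^{(1,m)}_{\Cal A}$, and the commutation of the central element $j^{01}(x')$ past $y^{(m)}_{01}e$ via graded commutativity of the constituents $A_{I\cup n}$.
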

{\bf The case $(l,m)$ for $l>1$}.

We define $h^{l,m}=0$ for $l>1$. To show the
condition Definition \ref{def of homotopy shuffle system}
(\ref{3rd condition for homotopy shuffle}), 
it is enough to consider the case where $l=2$.
In the case of $l=2$,
we prove this condition by the induction on $m$.
By the inductive hypotheses, we have
$$
h^{1,m}(x^{(1)}\otimes x^{(2)}\otimes y^{(1)}\otimes 
\cdots \otimes (y^{(i)}\cup
y^{(i+1)})\otimes \cdots \otimes y^{(m)})=0,
$$
and $\alpha+\beta+\gamma$ in
Definition \ref{def of homotopy shuffle system}
(\ref{3rd condition for homotopy shuffle})
is equal to
\begin{align*}
&x^{(1)}\cup h^{1,m}(x^{(2)}\otimes y^{(1)}\otimes \cdots \otimes y^{(m)}) \\
&+(-1)^{\deg x^{(2)} \deg y^{(1)}}
h^{1,1}(x^{(1)}\otimes y^{(1)})\cup
h^{1,m-1}(x^{(2)}\otimes y^{(2)}\otimes \cdots \otimes y^{(m)})\pm \cdots \\
&\pm
h^{1,m-1}(x^{(1)}\otimes y^{(1)}\otimes \cdots \otimes y^{(m-1)})\cup
h^{1,1}(x^{(2)}\otimes y^{(m)}) \\
&\pm
h^{1,m}(x^{(1)}\otimes y^{(1)}\otimes \cdots \otimes y^{(m)})\cup x^{(2)} \\
&-
h^{1,m}((x^{(1)}\cup x^{(2)})\otimes y^{(1)}\otimes \cdots \otimes y^{(m)}).
\end{align*}
Again we consider the case where the degrees of $x^{(i)}$ and $y^{(j)}$
are zero.
By the inductive definition of $h^{1,m}$, it is equal to
\begin{align*}
&x^{(1)}_0e \cdot h^{1,m}_0 (x^{(2)}_0e \otimes 
y^{(1)}_0e \otimes \cdots \otimes y^{(m)}_0e ) \\
&+
h^{1,1}_0 (x^{(1)}_0e \otimes y^{(1)}_0e )\cdot
h^{1,m-1}_0 (x^{(2)}_0e \otimes y^{(2)}_0e \otimes \cdots \otimes y^{(m)}_0e )
+\cdots \\
&+
h^{1,m-1}_0 (x^{(1)}_0e \otimes y^{(1)}_0e \otimes 
\cdots \otimes y^{(m-1)}_0e )
\cdot
h^{1,1}_0 (x^{(2)}_0e \otimes y^{(m)}_0e ) \\
&+
h^{1,m}_0 (x^{(1)}_0e \otimes y^{(1)}_0e 
\otimes \cdots \otimes y^{(m)}_0e )\cdot x^{(2)}_0e  \\
&-
h^{1,m}_0 ((x^{(1)}_0e \cdot x^{(2)}_0e )
\otimes y^{(1)}_0e \otimes \cdots \otimes y^{(m)}_0e )
 \\
-&i_0(x^{(1)}_0)e \cdot
h^{1,m}_{01} (x^{(2)}_{01}e \otimes 
i_0(y^{(1)}_0)e \otimes \cdots \otimes i_0(y^{(m)}_0)e ) \\
&-
i_0(h^{1,1}_0 (x^{(1)}_0e \otimes y^{(1)}_0e ))\cdot
h^{1,m-1}_{01} (x^{(2)}_{01}e \otimes 
i_0(y^{(2)}_0)e \otimes \cdots \otimes i_0(y^{(m)}_0)e )+\cdots \\
&-
i_0(h^{1,m-1}_0 (x^{(1)}_0e \otimes 
y^{(1)}_0e \otimes \cdots \otimes y^{(m-1)}_0e ))\cdot
h^{1,1}_{01} (x^{(2)}_{01}e \otimes i_0(y^{(m)}_0)e ) \\
&+
i_0(h^{1,m}_0 (x^{(1)}_0e \otimes 
y^{(1)}_0e \otimes \cdots \otimes y^{(m)}_0e ))\cdot x^{(2)}_{01}e  \\
&+
h^{1,m}_{01} ((i_0(x^{(1)}_0)e \cdot x^{(2)}_{01}e + 
x^{(1)}_{01}e \cdot i_1(x^{(2)}_1)e)
\otimes i_0(y^{(1)}_0)e \otimes \cdots \otimes i_0(y^{(m)}_0)e )
 \\
-&i_0(x^{(1)}_0)e \cdot
h^{1,m-1}_{01} (x^{(2)}_{01}e \otimes 
i_0(y^{(1)}_0)e \otimes \cdots \otimes i_0(y^{(m-1)}_0)e )\cdot
y^{(m)}_{01}e \\
&-
i_0(h^{1,1}_0 (x^{(1)}_0e \otimes y^{(1)}_0e ))\cdot
h^{1,m-2}_{01} (x^{(2)}_{01}e \otimes 
i_0(y^{(2)}_0)e \otimes \cdots \otimes i_0(y^{(m-1)}_0)e )\cdot
y^{(m)}_{01}e+\cdots \\
&-
i_0(h^{1,m-2}_0 (x^{(1)}_0e \otimes 
y^{(1)}_0e \otimes \cdots \otimes y^{(m-2)}_0e ))\cdot
h^{1,1}_{01} (x^{(2)}_{01}e \otimes i_0(y^{(m-1)}_0)e )\cdot
y^{(m)}_{01}e \\
&-
i_0(h^{1,m-1}_0 (x^{(1)}_0e \otimes 
y^{(1)}_0e \otimes \cdots \otimes y^{(m-1)}_0e ))\cdot x^{(2)}_{01}e
\cdot y^{(m-1)}_{01}e  \\
&+
h^{1,m-1}_{01} ((i_0(x^{(1)}_0)e \cdot x^{(2)}_{01}e + 
x^{(1)}_{01}e \cdot i_1(x^{(2)}_1)e)
\otimes i_0(y^{(1)}_0)e \otimes \cdots \otimes i_0(y^{(m-1)}_0)e )
\cdot y^{(m)}_{01}e
 \\
-&
h^{1,m}_{01} ( 
x^{(1)}_{01}e 
\otimes i_0(y^{(1)}_0)e \otimes \cdots \otimes i_0(y^{(m)}_0)e )
\cdot i_1(x^{(2)}_1)e \\
&-
h^{1,m-1}_{01} (( 
x^{(1)}_{01}e )
\otimes i_0(y^{(1)}_0)e \otimes \cdots \otimes i_0(y^{(m-1)}_0)e )
\cdot y^{(m)}_{01}e\cdot i_1(x^{(2)}_1)e.
\end{align*}
By a simple computation and Proposition \ref{central linearity},
it is equal to
\begin{align*}
&
h^{1,m}_{01} ( 
(x^{(1)}_{01}e \cdot i_1(x^{(2)}_1)e)
\otimes i_0(y^{(1)}_0)e \otimes \cdots \otimes i_0(y^{(m)}_0)e ) \\
&+
h^{1,m-1}_{01} (( 
x^{(1)}_{01}e \cdot i_1(x^{(2)}_1)e)
\otimes i_0(y^{(1)}_0)e \otimes \cdots \otimes i_0(y^{(m-1)}_0)e )
\cdot y^{(m)}_{01}e\\
&
-
h^{1,m}_{01} ( 
x^{(1)}_{01}e 
\otimes i_0(y^{(1)}_0)e \otimes \cdots \otimes i_0(y^{(m)}_0)e )
\cdot i_1(x^{(2)}_1)e \\
&-
h^{1,m-1}_{01} (( 
x^{(1)}_{01}e )
\otimes i_0(y^{(1)}_0)e \otimes \cdots \otimes i_0(y^{(m-1)}_0)e )
\cdot y^{(m)}_{01}e\cdot i_1(x^{(2)}_1)e \\
&=0
\end{align*}
Thus we have the following theorem.
\begin{theorem}
\label{obtain homotopy shuffle}
For a strict cosimplicial graded commutative DGA, there exists
a functorial homotopy shuffle system for the total DGA.
As a consequence, we have the following homomorphism of complexes:
\begin{equation}
\label{homotopy shuffle product}
\mu:\bold B_{red}(\check{C}(I,A)\otimes,\epsilon)\otimes
\bold B_{red}(\check{C}(I,A)\otimes,\epsilon)\to
\bold B_{red}(\check{C}(I,A)\otimes,\epsilon).
\end{equation}
This map is called the homotopy shuffle product.
\end{theorem}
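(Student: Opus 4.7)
The plan is to proceed by induction on $n$, the length of the indexing set $[0,n]$ of the strict cosimplicial graded commutative DGA $A_\bullet$, the base cases $n=0$ and $n=1$ already being handled in the text by setting $h^{l,m}=0$ for $l+m>1$ (resp.\ $l+m>2$) and, in the case $n=1$, by the explicit formula $h^{1,1}(x\otimes y)=-(x_{01}e)\cdot (y_{01}e)$. For the inductive step I would decompose the Čech situation exactly as already introduced: write $\Cal A_0$ for the Čech DGA of $\{A_I\}_{I\subset [0,n-1]}$, $\Cal A_1=A_n$, and $\Cal A_{01}$ for the Čech DGA of $\{A_{I\cup n}\}_{I\subset [0,n-1]}$, so that the total DGA of $A_\bullet$ is recovered as $\Cal A=\Cal A_0\oplus \Cal A_1\oplus \Cal A_{01}[-1]$ with structure morphisms $i_0, i_1$. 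By the inductive hypothesis, each of $\Cal A_0$, $\Cal A_1$, $\Cal A_{01}$ carries a functorial homotopy shuffle system $h^{l,m}_\star$ compatible with $i_0,i_1$, and moreover satisfies the centrality statement of Proposition \ref{central linearity}, which will turn out to be essential.

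I would then define $h^{l,m}_{\Cal A}$ on $\Cal A$ by the explicit formulas already written down in the text: $h^{1,1}_{\Cal A}$ is the sum of the pieces $h^{1,1}_0$, $-h^{1,1}_{01}$ evaluated on the $i_0$-image, and the bilinear correction $-(x_{01}e)\cdot(y_{01}e)$; $h^{1,m}_{\Cal A}$ for $m>1$ is built in the same fashion with an extra tail term $-\bigl[h^{1,m-1}_{01}(\cdots)\bigr]\cdot(y_{01}^{(m)}e)$; and $h^{l,m}_{\Cal A}=0$ for $l>1$. The axioms from Definition \ref{def of homotopy shuffle system} then have to be verified componentwise along the decomposition $\Cal A_0\oplus \Cal A_1\oplus \Cal A_{01}[-1]$. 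The inductive hypotheses kill the internal contributions within $\Cal A_0$ and $\Cal A_{01}$, the graded commutativity of $\Cal A_1=A_n$ handles the $\Cal A_1$ component, and the centrality of $i_1:\Cal A_1\to \Cal A_{01}$ forces the mixed $i_1$-terms to vanish whenever they appear paired with $h^{1,m}_{01}$. Along the way, Proposition \ref{central linearity} is established for $\Cal A$ from the corresponding statements for $\Cal A_0$ and $\Cal A_{01}$ as a direct consequence of the explicit formulas.

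For the case $l>1$ I would set $h^{l,m}=0$ and reduce the verification to $l=2$; for $l\geq 3$ the three terms $\alpha,\beta,\gamma$ of Definition \ref{def of homotopy shuffle system}(\ref{3rd condition for homotopy shuffle}) all vanish because at least one of $l',l'',m',m''$ exceeds $1$. For $l=2$ I would proceed by induction on $m$: expanding $\alpha+\beta+\gamma$ componentwise exactly as in the long calculation at the end of the section, the $\Cal A_0$-summand collapses by the inductive $l=1$ identity, and the remaining $\Cal A_{01}$-summands reorganize, via Proposition \ref{central linearity}, into pairs that cancel. This yields the existence of a functorial homotopy shuffle system on $\Cal A$. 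The second assertion of the theorem, namely the construction of the homotopy shuffle product (\ref{homotopy shuffle product}), is then immediate from the proposition preceding the theorem, which shows that the map $H=\sum_{l,m,k}H_{l,m,k}$ built from any homotopy shuffle system is a chain map on the reduced bar complex.

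The main obstacle is the sign bookkeeping in the $(1,m)$-step: the defining identity (\ref{goal to show}) involves many terms with subtle shifts, and verifying that the $\Cal A_{01}$-contributions combine correctly with the tail correction $h^{1,m-1}_{01}(\cdots)\cdot y_{01}^{(m)}e$ is where errors are most likely to occur. This is precisely the step where the centrality in Proposition \ref{central linearity} has to be invoked in order to move factors of the form $j(x')$ out of the homotopy argument; without it, neither the recursive formula for $h^{1,m}_{\Cal A}$ nor the cancellations in the $l=2$ case close up.
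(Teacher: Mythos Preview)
Your proposal is correct and follows essentially the same route as the paper: the entire subsection leading up to the theorem \emph{is} the paper's proof, and you have accurately summarized its inductive structure, the explicit formulas for $h^{1,1}_{\Cal A}$ and $h^{1,m}_{\Cal A}$, the vanishing $h^{l,m}=0$ for $l>1$, the reduction to $l=2$, and the crucial role of Proposition~\ref{central linearity} in closing the $l=2$ cancellations. Your identification of the $(1,m)$ sign bookkeeping as the main hazard matches where the paper spends its effort.
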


\subsection{Artin-Schreier sheaf and homotopy shuffle product}
We apply the construction of the last section to construct
homotopy shuffle products of the Artin-Schreier DGA's.
For an $\bold F_p$-algebra, we set 
$$
AS^*(R)=(AS^*_0(R)\oplus AS^*_1(R)\to 
AS^*_{01}(R)),
$$
where $AS^*_0(R)=R,AS^*_0(R)=R,AS^*_{01}(R)=R\oplus R$
and $i_0:AS^*_0\to AS^*_{01}$ and $i_1:AS^*_{1}\to AS^*_{01}$
is given by $i_0(x)=(x,x)$ and $i_1=(x,x^p)$.
Then the morphism of complexes $AS(R)\to AS^*(R)$ given by 
\begin{align}
\label{AS and AS* quasi-iso}
& AS^0\to AS^*_0\oplus AS^*_1:x \mapsto x\oplus x \\
\nonumber
& AS^1\to AS^*_{01}:y \mapsto 0\oplus y 
\end{align}
is a quasi-isomorphism of DGA's. 
Then the DGA $AS^*$ is a total DGA of a strict cosimplicial
graded commutative DGA. By this fact and the result of the last
section, we have a homotopy shuffle product on the bar complex
of $AS^*$ for an affine scheme. We extend this construction to 
separated irreducible schemes.
The sheaf on $X_{et}$ assocated to $AS^*$ is denoted as ${\Cal AS^*}$.

Let $\Cal W=\{W_i\to X\}_{i\in I}$ be a finite set of morphisms to $X$
indexed by $I=[n]$
For an element $\bold i=(i_0, \dots,i_l)$ in $\Cal P(I)$, we set 
$W_{\bold i}=W_{i_0}\times_X\cdots \times_X W_{i_l}$.
We consider a set $\tilde I=\{0^+,0^-,\dots, n^+,n^-\}$ and $I_i=\{i^+,i^-\}$.
We introduce a total order on 
$\tilde I$ by
$i^+<i^-$ for $i=0, \dots, n$ and
$i^-<(i+1)^+$ for $i=0, \dots, n-1$.
Let $\pi$ be the projection $\tilde I \to I$.
We define a strict cosimplicial graded commutative
DGA $AS^*(\Cal W)$ indexed by $\tilde I$
by
$$
AS^*(\Cal W)_{\tilde {\bold i}}=\begin{cases}
AS_0^*(W_{\pi(\tilde {\bold i})}) & 
\text{ if }\tilde{\bold i}\subset I^+\\
AS_1^*(W_{\pi(\tilde {\bold i})}) & 
\text{ if }\tilde{\bold i}\subset I^-\\
AS_{01}^*(W_{\pi(\tilde {\bold i})}) &
\text{ otherwise.}
\end{cases}
$$ 
Since the homomorphism (\ref{AS and AS* quasi-iso}) is a
quasi-isomorphism, we have the following proposition.
\begin{proposition}
The morphism (\ref{AS and AS* quasi-iso})
induces a homomorphism of strict cosimplicial DGA
$AS(\Cal W) \to \check{C}(\tilde I/I,AS^*(\Cal W))$ indexed by $I$.
Moreover the map 
$AS(\Cal W)_I \to \check{C}(\tilde I/I,AS^*(\Cal W))_I$
is a quasi-isomorphism of DGA.
\end{proposition}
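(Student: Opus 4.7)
The plan is to construct the strict cosimplicial DGA morphism $\bold j$-componentwise from (\ref{AS and AS* quasi-iso}), and then reduce the quasi-isomorphism claim to a componentwise argument that factors through $AS^*$.

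For each $\bold j = (j_0, \dots, j_l) \in \Cal P(I)$, I would apply the formulas in (\ref{AS and AS* quasi-iso}) pointwise with $R = \Gamma(W_{\bold j}, \Cal O_{W_{\bold j}})$ to obtain a DGA map $AS(W_{\bold j}) \to AS^*(W_{\bold j})$, and then compose with the natural augmentation $AS^*(W_{\bold j}) \to \check{C}(\tilde I_{\bold j}/\bold j, AS^*(\Cal W))_{\bold j}$ that identifies $AS^*_0$ and $AS^*_1$ with the $(\{j_0^+\}, \dots, \{j_l^+\})$ and $(\{j_0^-\}, \dots, \{j_l^-\})$ strata in degree zero and carries $AS^*_{01}[-1]$ into the mixed strata via the face maps $i_0, i_1$. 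Naturality of (\ref{AS and AS* quasi-iso}) in $R$ gives strict cosimplicial compatibility in $\bold j$, and the DGA property follows by checking that (\ref{AS and AS* quasi-iso}) intertwines the Artin-Schreier multiplication with the relation-diagram multiplication described in Section \ref{Recall}, together with routine Alexander-Whitney compatibility for the augmentation.

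For the quasi-isomorphism claim, I interpret $(\cdot)_I$ as the total Cech DGA and invoke the identification $\check{C}(I, \check{C}(\tilde I/I, AS^*(\Cal W))) \simeq \check{C}(\tilde I, AS^*(\Cal W))$ from the earlier relative-Cech proposition; the Cech spectral sequence then reduces matters to showing that each $\bold j$-component $AS(W_{\bold j}) \to \check{C}(\tilde I_{\bold j}/\bold j, AS^*(\Cal W))_{\bold j}$ is a quasi-isomorphism. Factoring as $AS(W_{\bold j}) \to AS^*(W_{\bold j}) \to \check{C}(\tilde I_{\bold j}/\bold j, AS^*(\Cal W))_{\bold j}$, the first leg is the affine quasi-iso already checked in the excerpt (by the $H^0 = \bold F_p$, $H^1 = R/(F - \mathrm{id})R$ calculation). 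The second leg — the main obstacle — follows from a spectral sequence on the polycosimplicial Cech double complex: each single Cech complex $\check{C}(I_{j_k}, AS^*(\Cal W)|_{I_{j_k}})$ equals $AS^*(W_{\bold j})$ on the nose by the very definition of $AS^*$ as the total DGA of the relation $AS^*_0 \oplus AS^*_1 \to AS^*_{01}$, so iterating over $k$ one expects the multi-total complex to collapse onto the diagonal image of $AS^*(W_{\bold j})$.

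The principal difficulty is this second leg. Because the values $AS^*_0, AS^*_1, AS^*_{01}$ depend on whether the polycosimplicial index lies wholly in $I^+$, wholly in $I^-$, or is mixed, the Cech complex does not split as a tensor product of copies of $AS^*(W_{\bold j})$, so iterated-Cech cannot be applied naively. A filtration by the number of mixed coordinates, together with a careful analysis separating the pure-$+$, pure-$-$, and mixed parts, should yield acyclicity of the mixed quotient complex and degeneration of the spectral sequence at $E_2$, producing the desired quasi-isomorphism.
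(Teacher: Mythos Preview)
The paper offers no proof beyond the sentence ``Since the homomorphism (\ref{AS and AS* quasi-iso}) is a quasi-isomorphism, we have the following proposition,'' so your proposal is already far more detailed than what appears there. Your overall architecture --- construct the cosimplicial map from (\ref{AS and AS* quasi-iso}) componentwise, then reduce the quasi-isomorphism to a $\bold j$-by-$\bold j$ statement via the relative Cech identification and a spectral sequence --- is sound and is presumably what the author intends.

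Two remarks. First, your reading of the subscript $I$ as the total Cech DGA is defensible but not forced; it could equally denote the value at the full subset $I \in \Cal P(I)$ or be shorthand for a generic $J$. The componentwise quasi-isomorphism you aim for implies all three readings, so this is harmless.

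Second, in the second leg you assert that ``each single Cech complex $\check{C}(I_{j_k}, AS^*(\Cal W)|_{I_{j_k}})$ equals $AS^*(W_{\bold j})$ on the nose.'' That is not right: with the remaining coordinates held at a mixed value, all three terms of the direction-$k$ Cech are $AS_{01}^* = R_{\bold j} \oplus R_{\bold j}$, and the resulting two-term complex has $H^0 = R_{\bold j} \oplus R_{\bold j}$, not $AS^*(W_{\bold j})$. You recognize the obstruction in the next paragraph, so this does not break the argument, but the proposed fix by filtering on the number of mixed coordinates is heavier than needed. A cleaner route is to run the iterated Cech after all: take Cech in the $l$-th direction and pass to termwise cohomology. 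One checks directly that the resulting polycosimplicial object over $(I_{j_0}, \dots, I_{j_{l-1}})$ reproduces exactly the same $AS^*$-pattern (value $AS_0^*$ at all-$+$, $AS_1^*$ at all-$-$, $AS_{01}^*$ at mixed, with the induced face maps on $H^0$ again equal to $i_0$ and $i_1$), so induction on $l$ finishes. Alternatively, use the projection onto the first summand of $AS_{01}^* = R_{\bold j} \oplus R_{\bold j}$ to obtain a short exact sequence $0 \to K \to AS^*(\Cal W)|_{\bold j} \to R^{const} \to 0$ of polycosimplicial objects; the Cech cohomology of $R^{const}$ is $R_{\bold j}$ concentrated in degree $0$, that of $K$ is $R_{\bold j}$ concentrated in degree $1$, and the connecting map is $F - id$, recovering $H^*(AS(W_{\bold j}))$ directly.
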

The Cech complex $\check{C}(\tilde I,AS^*(\Cal W))$ is a
strict cosimplicial graded commutative DGA.
By the last subsubsection, we have a product on
$H^0(B_{red}(\check{C}(\tilde I,AS^*(\Cal W))))$ 
arising from homotopy shuffle product.
\begin{theorem}
The multiplication
\begin{align*}
H^0(\mu):&
H^0(B_{red}(\check{C}(\tilde I,AS^*(\Cal W)))) 
\otimes 
H^0(B_{red}(\check{C}(\tilde I,AS^*(\Cal W)))) \\
&\to
H^0(B_{red}(\check{C}(\tilde I,AS^*(\Cal W))))
\end{align*}
obtained from the homotopy shuffle product
(\ref{homotopy shuffle product})
in Theorem \ref{obtain homotopy shuffle}
coincides with the product
on $\bold F_p[[\pi_1(X,\bar x)]]^*$ via the isomorphism
in Corollary \ref{isomorphism as a coalgebra}.
As a consequence, the above product $\mu$ is commutative.
\end{theorem}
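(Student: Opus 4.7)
The plan is to identify both products with the tensor product structure on nilpotent smooth etale $\bold F_{p}$-sheaves under the Tannakian dictionary, and conclude they must agree. First, the isomorphism of Corollary~\ref{isomorphism as a coalgebra} combined with the quasi-isomorphism $AS(\Cal W)\to\check{C}(\tilde I/I,AS^{*}(\Cal W))$ of the preceding proposition identifies $H^{0}(B_{red}(\check{C}(\tilde I,AS^{*}(\Cal W))))$ with $\bold F_{p}[[\pi_{1}(X,\bar x)]]^{*}$ as coalgebras, so the claim reduces to matching two bialgebra structures on the same coalgebra. By the standard correspondence between bialgebra structures on a coalgebra $C$ compatible with its given coalgebra structure and $\bold F_{p}$-linear symmetric monoidal structures on the category of $C$-comodules compatible with the forgetful functor, it suffices to check that the two multiplications induce the same tensor product on nilpotent smooth etale $\bold F_{p}$-sheaves.

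For the right-hand side, the Hopf algebra $\bold F_{p}[[\pi_{1}(X,\bar x)]]$ carries the cocommutative coproduct $g\mapsto g\otimes g$; dualizing, the product on $\bold F_{p}[[\pi_{1}(X,\bar x)]]^{*}$ is by construction the one whose comodule tensor product is the classical tensor product of $\pi_{1}$-representations, which under Katz's theorem corresponds to the usual tensor product of smooth etale $\bold F_{p}$-sheaves. For the left-hand side, I would trace the homotopy shuffle product through the equivalence $(HINC_{A})\simeq(H^{0}(B_{red}(A',\epsilon))\text{-comod})$ recalled after Definition~\ref{def of nilp conn}, using the universal connection $(H^{0}(B_{red}(A',\epsilon)),\nabla_{B})$. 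The key calculation is to check that, given two comodules corresponding to integrable nilpotent connections $(M_{1},\nabla_{1})$ and $(M_{2},\nabla_{2})$, the coaction on $M_{1}\otimes M_{2}$ built by composing the individual coactions with the shuffle product $H=\sum H_{l,m,k}$ produces exactly the tensor-product connection $\nabla_{1}\otimes 1+1\otimes\nabla_{2}$. The degree-zero component $H_{l,m,0}$ realises the Leibniz rule, while the higher terms $H_{l,m,k}$ with $k>0$ built from $h^{1,1}$ and its descendants supply precisely the homotopies needed for integrability to hold up to the homotopy equivalence used to form $(HINC_{A})$. This is the content of the axioms of Definition~\ref{def of homotopy shuffle system}: identity~(\ref{second axion of homotopy shffle system}) controls the first-order failure, and the higher compatibilities encoded in condition~(\ref{3rd condition for homotopy shuffle}) handle the coherence recursively.

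The main obstacle is this explicit compatibility between the shuffle formula and the tensor product of integrable nilpotent connections: one must verify that the shuffle-induced coaction gives an integrable connection on $M_{1}\otimes M_{2}$ that agrees, up to the specified homotopy equivalence, with $(M_{1}\otimes M_{2},\nabla_{1}\otimes 1+1\otimes\nabla_{2})$, and that morphisms and homotopies of comodules are carried to morphisms and homotopies of connections. A convenient way to organise the check is to first verify the compatibility for the universal comodule, where the coaction is tautological, and then propagate it through $cotor_{H}$ by its functoriality in the comodule argument. Once both bialgebra structures have been identified with the tensor product on $(NLS_{\bold F_{p}}/X_{et})$, the Tannakian uniqueness forces the products to coincide, and the commutativity assertion follows from the symmetry of the tensor product of etale sheaves.
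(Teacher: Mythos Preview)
Your overall strategy---identify both multiplications with the tensor product on nilpotent smooth \'etale $\bold F_p$-sheaves via the Tannakian dictionary---is exactly the paper's. The difference lies in how the ``left-hand side'' verification is carried out, and here the paper makes a reduction you do not.

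Rather than working directly with the Artin--Schreier DGA or $AS^*$, the paper first passes through the chain of quasi-isomorphisms
\[
\lim_{\to}\check{C}(I,\bold F_p(\Cal U))\ \to\ \lim_{\to}\check{C}(I,AS(\Cal U))\ \to\ \lim_{\to}\check{C}(\tilde I,AS^*(\Cal U)),
\]
and invokes the \emph{functoriality} of the homotopy shuffle system (Theorem~\ref{obtain homotopy shuffle}) to conclude that the shuffle products on both ends are compatible. This reduces the problem to checking compatibility for the \v{C}ech DGA of the \emph{constant sheaf} $\bold F_p$. That is a decisive simplification: for this DGA the higher maps $h^{i,j}$ with $i+j>2$ vanish by construction, so only $h^{1,1}$ survives, and the paper can then perform an explicit, finite computation with the simplicial bar complex and the descent automorphisms $R_{i_0,i_1}=1-\nabla$ to match $R^{(1)}\otimes R^{(2)}$ with the shuffle-induced comodule structure on $M_1\otimes M_2$.

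Your proposal instead attacks the compatibility directly on the level of $(HINC_A)$ for the full Artin--Schreier DGA, via the universal connection and $cotor_H$. This is not wrong in spirit, but the step you flag as the ``main obstacle''---that the shuffle-induced coaction on $M_1\otimes M_2$ agrees up to homotopy with $\nabla_1\otimes 1+1\otimes\nabla_2$---is only asserted, not established; saying it is ``the content of the axioms'' of Definition~\ref{def of homotopy shuffle system} is not a proof. Without the reduction to the constant-sheaf DGA you would have to control all the higher $h^{l,m}$ simultaneously, which is precisely what the paper avoids. If you insert the functoriality step and reduce to $\check{C}(I,\bold F_p(\Cal U))$ first, your outline becomes essentially the paper's argument and the obstacle dissolves into a short explicit check.
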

\begin{proof}
Let $U\to X$ be an etale covering of X
and we set $I=[m]$.
We consider the family $\Cal U=\{U_i\to X\}_{i\in I}$ 
of covering of $X$ indexed by $I$
by setting $U=U_i$. We have a strict cosimplicial DGA $AS^*(\Cal U)$ 
indexed by $\tilde I$.
Then we have the following quasi-isomorphisms:
$$
\lim_{\underset{\Cal U,I}\to}
\check{C}(I,\bold F_p(\Cal U)) \to
\lim_{\underset{\Cal U,I}\to}
\check{C}(I,AS(\Cal U)) \to
\lim_{\underset{\Cal U,I}\to}
\check{C}(\tilde I,AS^*(\Cal U))
$$

Since the homomorphism $\check{C}(I,\bold F_p(\Cal U)) \to
\check{C}(\tilde I,AS^*(\Cal U))$ comes from the homomorphism
of strict cosimplicial graded commutative DGA,
the homotopy shuffle product on 
$B_{red}=B_{red}(\check{C}(I,\bold F_p(\Cal U)),\epsilon)$ and that on 
$B_{red}(\check{C}(\tilde I,AS^*(\Cal U)),\epsilon)$ are compatible.
It is enough to show that the homotopy shuffle product on
$B_{red}(\check{C}(I,\bold F_p(\Cal U)),\epsilon)$
is compatible with the
tensor structure on nilpotent $\bold F_p$ smooth local systems.
Let $B_{simp}=B_{simp}(\check{C}(I,\bold F_p(\Cal U)),\epsilon)$ be
the simplicial bar complex and
we assume that $M$ comes form a $B_{simp}$
comodule.
We set $A^{\bullet}=\check{C}(I,\bold F_p(\Cal U))$.
Then $M$ has a direct sum decomposition $M=\oplus_\alpha M_\alpha$
and the comodule structure defines a map
$$
\nabla_{\alpha_0,\alpha_1}:
M_{\alpha_0} \to A^1\otimes M_{\alpha_1}=\oplus_{i_0<i_1}
\bold F_p(U_{i_0}\times_XU_{i_1})\otimes
M_{\alpha_1}
$$ 
for $\alpha_0 < \alpha_1$.
We set 
$
\nabla_{\alpha_0,\alpha_1}=
\oplus_{i_0<i_1}\nabla_{\alpha_0,\alpha_1,i_0,i_1}
$
and 
$$
R_{i_0,i_1}=id-\sum_{\alpha_0<\alpha_1}
\nabla_{\alpha_0,\alpha_1,i_0,i_1}
:M\to \bold F_p(U_{i_0}\times_XU_{i_1})\otimes M.
$$ 
By the comodule condition, we have
$R_{i_0,i_1}R_{i_1,i_2}=R_{i_0,i_2}$, the set
$\{R_{i_0,i_1}\}$ of automorphism defines a local system on $X$.
Here we used the multiplication structure
$$
\bold F_p(U_{i_0}\times_XU_{i_1})\otimes 
\bold F_p(U_{i_1}\times_XU_{i_2})\to
\bold F_p(U_{i_0}\times_XU_{i_1}\times_XU_{i_2})
$$
induced by the diagonal map. Now we consider two comodules
$M_1$ and $M_2$. Then we have two systems of automorphisms
\begin{align*}
R_{i_0,i_1}^{(1)}
:M_1\to \bold F_p(U_{i_0}\times_XU_{i_1})\otimes M_1, \\
R_{i_0,i_1}^{(2)}
:M_2\to \bold F_p(U_{i_0}\times_XU_{i_1})\otimes M_2. \\
\end{align*}
And the direct sum decompositions $M_1=M_{1,\alpha_0}$ and
$M_2=M_{2,\beta_0}$.
The tensor product of the local systems associated to 
$\{R_{i_0,i_1}^{(1)}\}$ and
$\{R_{i_1,i_2}^{(2)}\}$ is equal to that associated to 
$R^{(1)}_{i_0,i_1}\otimes R^{(2)}_{i_0,i_1}$.
Here we used the algebra strcuture on 
$\bold F_p(U_{i_0}\times_XU_{i_1})$.

On the other hand, we compute the comodule structure on the
tensor product $M_1$ and $M_2$ using the homotopy shuffle product
by the following diagram:
\begin{equation}
\label{comodule from twisted shuffle product}
M_1\otimes M_2 \to B_{red}\otimes M_1 \otimes B_{red}\otimes M_2=
B_{red}\otimes B_{red} \otimes M_1\otimes M_2 
\overset{\mu \otimes 1\otimes 1}\to B_{red} \otimes M_1\otimes M_2
\end{equation}
We use the map $h^{1,1}:A^1_{\alpha_0,\alpha_1}\otimes A^1_{\beta_0,\beta_1}
\to A_{\alpha_0+\beta_0,\alpha_1+\beta_1}^1$ 
which is compatible with the
homotopy shuffle system for $B_{red}$. Note that the terms $h^{i,j}$ for $i+j>2$
vanishes by the construction of homotopy shuffle product.
Therefore the $(i_0,i_1)$ part of 
(\ref{comodule from twisted shuffle product})
is equal 
to the direct sum of
$$
-\sum_{\substack{
\alpha_0+\beta_0=\gamma_0, \\
\alpha_1+\beta_1=\gamma_1}}\nabla_{\alpha_0,\alpha_1}^{(1)}\otimes
\nabla_{\beta_0,\beta_1}^{(1)}
+\nabla^{(1)}_{\gamma_0,\gamma_1}\otimes 1
+1\otimes \nabla^{(2)}_{\gamma_0,\gamma_1}
$$
Here we used the algebra structure (commutative)
of $\bold F_p(U_{i_0}\times_X U_{i_1})$.
Therefore the comodule structure defined by the map (\ref{comodule from
twisted shuffle product}) defines
the tensor of two local systems.
\end{proof}
We expect the following.
\begin{conjecture}
\label{homotopy commutative conjecture}
More strongly, it is expected that
the homotopy shuffle product (\ref{homotopy shuffle product})
is homotopy commutative for any strict simplicial graded commutative
DGA $A^\bullet$ indexed by $I$. If it is homotopy commutative,
the product on cohomology
\begin{align*}
H^\bullet(\mu):&
H^\bullet(B_{red}(\check{C}(I,A^\bullet))) 
\otimes 
H^\bullet(B_{red}(\check{C}(I,A^\bullet))) \\
&\to
H^\bullet(B_{red}(\check{C}(I,A^\bullet)))
\end{align*}
of homotopy shuffle product is graded commutative.
The author does not know whether the homotopy shuffle 
product is homotopy commutative nor the commutativity of
 $H^{\bullet}(\mu)$ 
for any strictly simplicial
graded commutative DGA. If the base field is of characteristic zero,
there is a construction of commutative DGA out of strict simplicial
graded commutative DAG due to Thom-Whitney (see \cite{A}).
\end{conjecture}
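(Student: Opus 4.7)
The conjecture has two parts, but the second (graded commutativity of $H^{\bullet}(\mu)$) is an immediate formal consequence of the first (homotopy commutativity of $\mu$ as a chain map on the double complex $\bold B_{red}$), so the substance lies in constructing an explicit chain homotopy
\[
K:\bold B_{red}(\check{C}(I,A))\otimes \bold B_{red}(\check{C}(I,A))\to \bold B_{red}(\check{C}(I,A))[-1]
\]
satisfying $\mu-\mu\circ\sigma=dK+Kd$, where $\sigma$ is the switching isomorphism. My plan is to extend the homotopy shuffle system with a second layer of symmetrization homotopies and to assemble $K$ by a shuffle-type sum analogous to the construction of $\mu$ itself.

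First I would introduce a \emph{symmetric homotopy shuffle system}: in addition to the $h^{l,m}$, a family of degree $-1$ maps $k^{l,m}:A^{\otimes l}[l]\otimes A^{\otimes m}[m]\to A$ required to satisfy an identity of the shape
\[
h^{l,m}-(-1)^{lm}\,h^{m,l}\circ\sigma_{l,m}=-dk^{l,m}+k^{l,m}d+(\text{lower-order compositions of $h$'s and $k$'s}),
\]
together with analogous boundary contributions in the $d_B$-directions. The base case $(1,1)$ is forced by the already symmetric identity $\mu+\mu\sigma=-dh^{1,1}+h^{1,1}d$, so the existence of $k^{1,1}$ reduces to a coboundary question for a symmetric $2$-cocycle in a small explicit complex.

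Second, I would construct $\{k^{l,m}\}$ by induction on the simplicial index $n$, exactly in parallel with \S 4.2. For $n=0$ the total DGA is strictly graded commutative and one takes $k^{l,m}=0$. For the inductive step, writing $\Cal A=\Cal A_0\oplus\Cal A_1\to\Cal A_{01}$, the map $k^{l,m}_{\Cal A}$ is to be defined by a formula obtained by symmetrizing the defining expression for $h^{l,m}_{\Cal A}$ in the two tensor slots, with centrality (cf.\ Proposition \ref{central linearity}) ensuring compatibility with $i_0$ and $i_1$. With $\{k^{l,m}\}$ in hand, assemble $K$ as a sum over triples $(f,p,\varepsilon)$, where $f\in S^{(k)}_{l,m}$ ranges over the surjections used in the definition of $\mu$, $p\in[1,l+m-k]$ singles out a distinguished slot, and $\varepsilon$ is a sign, by replacing the $p$-th tensor factor $h^{s'(f,p),s''(f,p)}$ inside $H(f)$ by $k^{s'(f,p),s''(f,p)}$. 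The identity $\mu-\mu\sigma=dK+Kd$ should then reduce, just as in the proof that $\sum H_{l,m,k}$ is a chain map, to a bookkeeping argument grouping contributions by the underlying surjection $\sigma_i\circ f$.

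The main obstacle is the very existence of $k^{l,m}$ at the inductive step for general $(l,m)$. Unlike the $h^{l,m}$, whose defining recursion plugs lower-order data into a single tensor coordinate, the recursion for $k^{l,m}$ must absorb contributions from all reshuffles of the two arguments simultaneously; and in positive characteristic one cannot appeal to a Thom--Whitney-type rectification to bypass this. Concretely, one is asking for a family of higher homotopies refining the homotopy shuffle system to an $E_\infty$-commutativity structure on $\check{C}(I,A)$, which is essentially the full content of the conjecture. A realistic partial target is therefore to establish the existence of $k^{l,m}$ for $l+m\leq 3$ by hand, which would give homotopy commutativity of $\mu$ modulo operators of sufficiently high bar-degree and hence graded commutativity of $H^i(\mu)$ in a bounded range of $i$; and to settle the general case by invoking an operadic chain-level $E_\infty$-refinement of the homotopy shuffle system, for which the surjection operad of McClure--Smith appears to be the natural carrier.
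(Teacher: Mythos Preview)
The statement you are attempting to prove is stated in the paper as a \emph{conjecture}, not a theorem; the paper contains no proof of it. The text of the conjecture itself says explicitly that the author does not know whether the homotopy shuffle product is homotopy commutative, nor whether $H^{\bullet}(\mu)$ is graded commutative, for a general strict cosimplicial graded commutative DGA. So there is nothing in the paper against which to compare your proposal: what you have written is a research plan toward an open problem, not an alternative proof of a result the paper establishes.

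As a plan, your outline is reasonable in spirit but, as you yourself recognize, it does not close the main gap. The crucial step is the inductive construction of the maps $k^{l,m}$ for arbitrary $(l,m)$, and you correctly observe that the recursion required for $k^{l,m}$ is qualitatively harder than the one for $h^{l,m}$: it must control all reshufflings of the two tensor arguments at once, and in positive characteristic there is no Thom--Whitney rectification to fall back on. Saying that this amounts to upgrading the homotopy shuffle system to an $E_\infty$-commutativity structure on $\check{C}(I,A)$ is exactly right, but that reformulation is the conjecture, not a proof of it. Invoking the McClure--Smith surjection operad is a plausible direction, but you would need to exhibit a compatible action on the specific Cech-type total complex used here and then check that it induces the same $H^0$-product as the paper's homotopy shuffle system; neither step is carried out. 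Your proposed partial result for $l+m\le 3$ is a sensible first target, but even there the base case $k^{1,1}$ is not forced to exist by the identity $\mu+\mu\sigma=-dh^{1,1}+h^{1,1}d$ alone: that identity gives a homotopy between $\mu$ and $-\mu\sigma$, not between $\mu$ and $\mu\sigma$, so one still has to produce a genuine homotopy witnessing $h^{1,1}\sim -h^{1,1}\sigma$, and this is where the positive-characteristic obstruction first appears.
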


\section{Descent theroy up to homotopy}

In this section, we treat $\bold F_q$-schemes  
with an $\bold F_q$-valued point $x$, where $q=p^e$.
We recover $\bold F_p$-Malcev completion
from $\bold F_q$-Malcev completion by the descent theory.

Let $R$ be an algebra over $\bold F_q$. We define q-Artin-Schreier
DGA $AS(R)^{(e)}$ by the total complex of the relation daigram
$$
R \begin{matrix}\overset{id}\longrightarrow \\
\underset{F^e}\longrightarrow \end{matrix}
R.
$$
Let $X$ be a separated scheme of finite type over $\bold F_q$
and $\Cal W=\{W_i\}_{i\in I}$ be a finite affine open covering
of $X$. Then the correspondence $J\mapsto AS(W_J)^{(e)}$
defines a strict cosimplicial DGA indexed by $I$, 
which is denoted as $AS(\Cal W)^{(e)}$.
The Cech complex is denoted as $\check{C}(I,AS(\Cal W)^{(e)})$.
Since the Frobenius map $F:R\to R:x\mapsto x^p$
is a ring homomorphim, we have a $F$-linear complex homomorphism
$F:AS(R) \to AS(R)$. The $e$-th power $F^e$ of $F$ is $\bold F_q$-linear
and it is homotopic to the identity map by the homotopy
$h_{Spec(R)}:AS^1(R)^{(e)}\to AS^0(R)^{(e)}:x\mapsto x$.
Since the Frobenius map is functorial for 
$\bold F_q$-algebras, it induces a $F$-linear homomorphism of DGA's
$$
\varphi:\check{C}(I,AS(\Cal W)^{(e)})\to \check{C}(I,AS(\Cal W)^{(e)}).
$$
Since the homotopy $h$ is functorial on $\bold F_q$-algebras,
$$
\prod_{\mid J\mid=p}h_{W_J}:\prod_{\mid J\mid =p}AS^1(W_J)\to
\prod_{\mid J\mid =p}AS^0(W_J)
$$ 
defines a null homotopy of 
$$
F^e-id:\check{C}(I,AS(\Cal W)^{(e)})\to \check{C}(I,AS(\Cal W)^{(e)}).
$$ 

Assume that we have an $\bold F_q$-valued point on $X$.
Then we have an augmentation map 
$\epsilon:\check{C}(I,AS(\Cal W)^{(e)})\to \bold F_q$.
Since $\check{C}(I,AS(\Cal W))$ is a DGA over $\bold F_q$,
we can define the bar complex
$B(\check{C}(I,AS(\Cal W)^{(e)}),\epsilon)_{\bold F_q}$ over $\bold F_q$.
The homomorphism $\varphi$ is $F$-linear. Therefore
we have the following $F$-linear homomorphism.
$$
\varphi:H^0(B(\check{C}(I,AS(\Cal W)),\epsilon))_{\bold F_q} \to
H^0(B(\check{C}(I,AS(\Cal W)),\epsilon))_{\bold F_q}.
$$
Similarly as in \S \ref{AS DGA}, the degree zero cohomology of the bar complex of 
$\check{C}(AS(\Cal W)^{(e)})$ over $\bold F_q$
is isomorphic to $\bold F_q$-completion of
$\pi_1(X,\bar x)$. To recover the $\bold F_p$-completion of
$\pi_1(X, \bar x)$, we need the descent data for 
$H^0(B(AS(\Cal W/X)^{(e)},\epsilon))_{\bold F_q}$. In this section,
we construct the descent data
for $H^0(B(AS(\Cal W/X)^{(e)},\epsilon))_{\bold F_q}$.

\begin{proposition}
\begin{enumerate}
\item
$\varphi^e$ is an identity.
As a consequence, there is a unique $\bold F_p$ subspace 
$H^0(B(\check{C}(I,AS(\Cal W)),\epsilon))_0$ 
called the standard $\bold F_p$ structure
in
$H^0(B(\check{C}(I,AS(\Cal W)),\epsilon))_{\bold F_q}$
such that 
$$
H^0(B(\check{C}(I,AS(\Cal W)),\epsilon))_{\bold F_q}
=\bold F_q\otimes_{\bold F_p} 
H^0(B(\check{C}(I,AS(\Cal W)),\epsilon))_0
$$
and the action of $\varphi$ is equal to $F\otimes id$ under this
identification.
\item
The standard $\bold F_p$ structure is stable under multiplication and
comultiplication.
\item
$H^0(B(\check{C}(I,AS(\Cal W)),\epsilon))_0$ 
is isomorphic to the dual of
$\bold F_p[[\pi_1(X,\bar x)]]$.
\end{enumerate}
\end{proposition}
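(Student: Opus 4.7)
The plan for (1) is to lift the chain homotopy already at hand on the Cech DGA to the reduced bar complex, then invoke Galois descent. The homotopy $h_{\operatorname{Spec}(R)}:AS^1(R)^{(e)}\to AS^0(R)^{(e)}$, $x\mapsto x$, satisfies $dh+hd=F^e-\operatorname{id}$ on $AS(R)^{(e)}$ (on $A^0$: $hd(x)=h(x^q-x)=x^q-x$; on $A^1$: $dh(x)=d(x)=x^q-x$), and its functoriality on $\bold F_q$-algebras yields, via products over multi-indices $J$ together with Alexander-Whitney compatibility, a chain homotopy $\tilde h$ on $\check C(I,AS(\Cal W)^{(e)})$ with $d\tilde h+\tilde h d=\varphi^e-\operatorname{id}$. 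To promote this to $B_{red}$, I would use the telescope formula
$$
\Phi(x_1\otimes\cdots\otimes x_n)=\sum_{i=1}^n \pm\,\varphi^e(x_1)\otimes\cdots\otimes\varphi^e(x_{i-1})\otimes \tilde h(x_i)\otimes x_{i+1}\otimes\cdots\otimes x_n,
$$
and check that $d\Phi+\Phi d$ equals the induced endomorphism of the bar complex minus the identity: the inner-differential parts telescope directly, and the bar-differential parts, which involve products like $\varphi^e(x_{i-1})\cdot\tilde h(x_i)$ and $\tilde h(x_i)\cdot x_{i+1}$, should cancel in adjacent pairs once one uses that $\varphi^e$ is a DGA endomorphism. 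This gives $\varphi^e=\operatorname{id}$ on $H^0(B_{red})_{\bold F_q}$. Galois descent for the extension $\bold F_q/\bold F_p$ then finishes (1): for any $\bold F_q$-vector space $V$ with an $F$-semilinear endomorphism $\varphi$ satisfying $\varphi^e=\operatorname{id}$, the fixed subspace $V_0=V^\varphi$ is a canonical $\bold F_p$-form, and $V\simeq\bold F_q\otimes_{\bold F_p}V_0$ with $\varphi$ acting as $F\otimes\operatorname{id}$; uniqueness is immediate.

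For (2), the homotopy shuffle product of Theorem~\ref{obtain homotopy shuffle} and the bar comultiplication~(\ref{comult on h0}) are constructed functorially from the DGA data, so the $F$-semilinear DGA endomorphism $\varphi$ induces $F$-semilinear algebra and coalgebra endomorphisms of $H^0(B_{red})_{\bold F_q}$. The fixed subspace of a semilinear algebra endomorphism is automatically a subalgebra, and the descent identity $(V\otimes V)^{\varphi\otimes\varphi}=V_0\otimes V_0$ implies that $V_0$ is also a subcoalgebra. For (3), the proof of Corollary~\ref{isomorphism as a coalgebra} carries over verbatim with $\bold F_q$ in place of $\bold F_p$ and $AS^{(e)}$ in place of $AS$, producing a natural coalgebra isomorphism $V^*\simeq\bold F_q[[\pi_1(X,\bar x)]]$. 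Because $\pi_1(X,\bar x)$ is a topological invariant that does not depend on the field of definition and because the identification is natural in $X$ and compatible with Frobenius pullback, $\varphi$ corresponds to $F\otimes\operatorname{id}$ under the decomposition $\bold F_q[[\pi_1(X,\bar x)]]\simeq\bold F_q\otimes_{\bold F_p}\bold F_p[[\pi_1(X,\bar x)]]$. Passing to $\varphi$-fixed elements and dualizing then yields $V_0\simeq\bold F_p[[\pi_1(X,\bar x)]]^*$, as required.

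The main technical obstacle is the signed verification that $\Phi$ is a chain homotopy on $B_{red}$: the reduced bar differential mixes adjacent tensor factors through multiplication, and because $\tilde h$ is not a derivation, correction terms appear that must be shown to cancel. One may alternatively bypass this calculation by arguing directly from the $\bold F_q$-analog of Corollary~\ref{isomorphism as a coalgebra} that $\varphi$ intertwines with $F\otimes\operatorname{id}$ on $\bold F_q\otimes_{\bold F_p}\bold F_p[[\pi_1(X,\bar x)]]$, so that $\varphi^e=\operatorname{id}$ follows from $F^e=\operatorname{id}_{\bold F_q}$; this route folds (1) into the naturality argument already needed for (3) and is probably the cleanest path.
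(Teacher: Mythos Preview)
Your approach for (1) is exactly the paper's: lift the chain homotopy $h$ on the \v Cech DGA to the bar complex via a telescope formula, and then apply Galois descent.  The paper carries out precisely the verification you flag as the ``main technical obstacle''.  The decisive identity is that $h_A$ is multiplicative in the twisted sense
\[
h_A\circ\mu \;=\; \mu\circ\bigl(1\otimes h_A + h_A\otimes F^e\bigr)
\quad\text{on }A\otimes A,
\]
which the paper checks by direct computation on Artin--Schreier cocycles; once this holds, the bar-differential cross terms in $d\Phi+\Phi d$ collapse termwise rather than merely ``in adjacent pairs''.  Note the asymmetry: because the cup product in $AS^{(e)}$ satisfies $(\xi)_1\cup(y)_0=(\xi\, y^q)_1$, the identity above holds with $F^e$ on the \emph{right} factor, and correspondingly the paper's telescope is
\[
H \;=\; \sum_i\; 1_A\otimes\cdots\otimes 1_A\otimes h_A\otimes F^e_A\otimes\cdots\otimes F^e_A,
\]
opposite to the ordering you wrote.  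With your ordering one would need $h_A(xy)=F^e(x)\,h_A(y)+h_A(x)\,y$, which fails for this DGA; swapping the convention fixes it and the rest of your argument goes through unchanged.

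Your treatment of (2) and (3) is in fact more detailed than the paper's, which focuses on the homotopy construction for (1); your naturality and descent arguments for these parts are correct.  Your proposed alternative route---deducing $\varphi^e=\mathrm{id}$ from the $\bold F_q$-version of Corollary~\ref{isomorphism as a coalgebra}---is valid and is indeed cleaner, though the paper chooses the explicit homotopy instead.
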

\begin{proof}
We construct a null homotopy of 
$$
F^e-id:B(\check{C}(I,AS(\Cal W)),\epsilon)_{\bold F_q}\to 
B(\check{C}(I,AS(\Cal W)),\epsilon)_{\bold F_q}.
$$
In the proof, the tensor product ``$\otimes$'' means a tensor product 
over $\bold F_q$ unless otherwise mentioned.
Let $A$ be the Artin-Schreier DGA $\check{C}(I,AS(\Cal W))$.
For $\alpha_0<\cdots <\alpha_n$, we define a $\bold F_p$-linear map 
$$
H_{\alpha_0,\dots,\alpha_n}:\bold F_q\overset{\alpha_0}\otimes A
\overset{\alpha_1}\otimes\cdots 
\overset{\alpha_{n-1}}\otimes A\overset{\alpha_n}\otimes \bold F_q[1]
\to
\bold F_q\overset{\alpha_0}\otimes A
\overset{\alpha_1}\otimes\cdots 
\overset{\alpha_{n-1}}\otimes A\overset{\alpha_n}\otimes \bold F_q
$$  
by 
\begin{align*}
H_{\alpha_0,\dots,\alpha_n}=&1_{\bold F_q}\overset{\alpha_0}\otimes 
h_A\overset{\alpha_1}\otimes F^e_A\overset{\alpha_2}\otimes 
\cdots \overset{\alpha_{n-1}}\otimes F^e_A
\overset{\alpha_{n}}\otimes 1_{\bold F_q}\\
&+
1_{\bold F_q}\overset{\alpha_0}\otimes 
1_A\overset{\alpha_1}\otimes h_A\overset{\alpha_2}\otimes F^e_A\otimes 
\cdots \overset{\alpha_{n-1}}\otimes F^e_A\overset{\alpha_{n-1}}\otimes
1_{\bold F_q}
+\cdots \\
&+
1_{\bold F_q}\overset{\alpha_0}\otimes 
1_A\overset{\alpha_1}\otimes  
\cdots \overset{\alpha_2}\otimes 1_A
\overset{\alpha_{n-1}}\otimes h_A 
\overset{\alpha_{n-1}}\otimes 1_{\bold F_q}.
\end{align*}
Then $H=\sum_{\alpha_0,\dots, \alpha_n}H_{\alpha_0,\dots, \alpha_n}$
defines a homotopy of $F^e-id$ on 
$B(\check{C}(I,AS(\Cal W)),\epsilon)_{\bold F_q}$.
To show that $H$ defines a homotopy of $1-F^e$ on 
$B(\check{C}(I,AS(\Cal W)),\epsilon)_{\bold F_q}$, it is enough to
show that the following diagram commutes:
$$
\begin{matrix}
A\otimes A & \overset{\mu}\to & A \\
\hskip -1.0in 1\otimes h_A +h_A\otimes F^e\downarrow & & \downarrow h_A \\
A\otimes A[-1] & \overset{\mu}\to & A[-1] \\
\end{matrix}
$$
Let $(x)_{J,i}$ be an element $x\in R_J$ considered in $AS^i(R_J)$
and $J=(j_0, \dots, j_k),K=(j_k, \dots, j_{k+l}),
L=(j_0, \dots, j_{k+l})$.
Then we have
\begin{align*}
&h_A\circ \mu(((x_0)_{J,0}+(x_1)_{J,1})\otimes ((y_0)_{K,0}+(y_1)_{K,1})) =
(x_0\cdot y_1+x_1\cdot y_0^q)_{L,0} 
\end{align*}
and
\begin{align*}
&\mu\circ (1\otimes h_A-h_A\otimes (F^e))
(((x_0)_{J,0}+(x_1)_{J,1})\otimes ((y_0)_{K,0}+(y_1)_{K,1}))  \\
=&\mu(
((x_0)_{J,0}+(x_1)_{J,1})\otimes (y_1)_{K,0})-\mu
((x_1)_{J,0}\otimes ((y_0^q)_{K,0}+(y_1^q)_{K,1})) \\
=& (x_0\cdot y_1)_{L,0}+ (x_1\cdot y_1^q)_{L,1}
+(x_1\cdot y_0^q)_{L,0}-(x_1\cdot y_1^q)_{L,1}.
\end{align*}
Therefore $H$ defines a homotopy of $F^e-id$.
\end{proof}

%\section{Equidimensional local fields}

\section{Appendix Remarks on pro $p$ completion}
\label{appendix}
In this section, we recall that $\bold F_p$ completion of a group is isomorphic
to its pro-$p$ completion.
\begin{proposition}
Let $G$ be a finit $p$ group. Then $\bold F_p[G]$ is a Artin local
field. Especially, Let $\epsilon:\bold F_p[G]\to \bold F_p$
be the augmentation map and $I=\Ker(\epsilon)$ be the augmentation 
ideal. Then $I^n=0$ for sufficiently large $n$.(See also \cite{S}, \S 15.6)
\end{proposition}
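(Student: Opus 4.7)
The plan is to prove nilpotency of the augmentation ideal $I$ by induction on $|G|$, and then deduce the Artin local property as a formal consequence. The key algebraic input is the identity
\[
(g-1)^{p^k}=g^{p^k}-1\quad\text{in }\mathbf{F}_p[G],
\]
which holds because the Frobenius is a ring homomorphism in characteristic $p$. Since every element of a finite $p$-group has order a power of $p$, each generator $g-1$ of $I$ is nilpotent; however, nilpotence of individual generators is not enough, so I will exploit the center.

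For the induction, I would choose a nontrivial central element $z\in Z(G)$ of order $p$ (which exists because finite $p$-groups have nontrivial center). Let $N=\langle z\rangle$ and consider the two-sided ideal $J=(z-1)\mathbf{F}_p[G]$. Since $z$ is central, $J$ is a genuine ideal, and $(z-1)^p=z^p-1=0$, so $J^p=0$. The quotient $\mathbf{F}_p[G]/J$ is naturally identified with $\mathbf{F}_p[G/N]$, and under this identification $I$ maps onto the augmentation ideal $\bar I$ of $\mathbf{F}_p[G/N]$. Since $|G/N|<|G|$, the inductive hypothesis gives $\bar I^{\,m}=0$ for some $m$, i.e.\ $I^m\subseteq J$. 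Raising to the $p$-th power then yields
\[
I^{mp}\subseteq J^p=0,
\]
completing the induction. The base case $G=\{1\}$ is trivial since $I=0$.

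Finally, to conclude that $\mathbf{F}_p[G]$ is Artin local with maximal ideal $I$: the algebra is finite-dimensional over $\mathbf{F}_p$, hence Artinian. The quotient $\mathbf{F}_p[G]/I\cong\mathbf{F}_p$ is a field, so $I$ is maximal. Because $I$ is nilpotent, it is contained in the nilradical, hence in every prime ideal and in particular in every maximal ideal; combined with the maximality of $I$ itself, this forces $I$ to be the unique maximal ideal. Thus $\mathbf{F}_p[G]$ is local, and $I^n=0$ for all $n$ sufficiently large. The main obstacle is purely bookkeeping: one must make sure $J=(z-1)\mathbf{F}_p[G]$ is two-sided and that the identification $\mathbf{F}_p[G]/J\cong \mathbf{F}_p[G/N]$ sends $I$ onto $\bar I$, both of which follow from the centrality of $z$.
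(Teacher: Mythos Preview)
Your proof is correct and follows essentially the same strategy as the paper: induct by quotienting out a central subgroup, use that the kernel ideal is nilpotent because the subgroup is central, and combine with the inductive hypothesis on the quotient. The only differences are cosmetic: the paper inducts on the length of the upper central series and quotients by the full center $P$ (treating the abelian case explicitly as the base), whereas you induct on $|G|$ and quotient by a single central element $z$ of order $p$, which gives the clean bound $J^p=0$ directly and lets the trivial group serve as the base case. Your packaging is slightly more efficient; the paper's version makes the abelian case visible but then has to unwind products $\gamma_1(p_1-q_1)\cdots\gamma_K(p_K-q_K)$ by hand.
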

\begin{proof}
We prove the theorem by the induction of the length of upper central
series. If $G=\bold Z/q_1\bold Z\times\cdots \times \bold Z/q_n\bold Z$ 
is abelian where $q_1, \dots, q_n$ is a power of $p$, then
$\bold F[G]\simeq \bold F_p[x_1,\dots, x_n]/(x^{q_1}_1-1,\dots,x^{q_n}_n-1)$
is a local ring whose maximal ideal is the augmentation ideal $I$.
Let $G$ is a $p$ group and $P$ be a center of $G$. Then $P$ is
non-trivial. Since the augmentation ideal $I$ is generated by
$g-1$, it is enough to prove that there exists an $N$ such that
$(g_1-1)\cdots (g_N-1)=0$ for any $g_1, \dots, g_N\in G$. 
By the induction of the length of upper central series,
there exisits a positive integer $M$ such that $(g_1-1)\cdots (g_M-1)$
is contained in $Ker(\bold F_p[G]\to \bold F_p[G/P])\cap I$
for any $g_1,\dots g_M\in G$.
Therefore $(g_1-1)\cdots (g_M-1)$ can be expressed as a sum of 
$\gamma(p-q)$,
where $p,q \in P$. 
Moreover there exists $K$ such that 
\begin{align*}
&\gamma_1(p_1-q_1)\gamma_2(p_2-q_2)\cdots \gamma_K(p_K-q_K) \\
=& 
\gamma_1\gamma_2\cdots \gamma_K(p_1-q_1)(p_2-q_2)\cdots (p_K-q_K) \\
=&0 
\end{align*}
for any $\gamma_i \in G, p_i,q_i \in P$. Therefore $I^{MK}=0$.
This proves the theorem.
\end{proof}
Thus we have the following corollary.
\begin{corollary}
Let $G$ be a $p$-group.
\begin{enumerate}
\item
Then the group like element in $\bold F_p[G]$ is equal to $G$.
\item
$G$ is equal to the $\bold F_p$ Malcev completion of $G$.
\item
The pro-$p$ completion of $G$ is equal to $\bold F_p$ Malcev completion.
\end{enumerate}
\end{corollary}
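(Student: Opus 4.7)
I would dispatch the three items in order, treating (1) as the core Hopf‑algebra computation and then reducing (2) and (3) to it via the nilpotence of the augmentation ideal established by the previous proposition.

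For item (1), I would write a general group‑like element as $x = \sum_{g \in G} c_g\, g \in \mathbf{F}_p[G]$ and unpack the defining conditions $\epsilon(x)=1$ and $\Delta(x) = x \otimes x$ for the standard Hopf structure $\Delta(g) = g \otimes g$. Expanding both sides,
\[
\sum_{g} c_g\, (g \otimes g) \;=\; \sum_{g,h} c_g c_h\, (g \otimes h),
\]
and comparing coefficients of the basis $\{g \otimes h\}$ yields $c_g^2 = c_g$ and $c_g c_h = 0$ for $g \ne h$. Hence each $c_g \in \{0,1\}$ and at most one is nonzero; the augmentation condition $\sum_g c_g = 1$ forces exactly one $c_g$ to equal $1$, so $x \in G$. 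This argument uses only that $\{g\}_{g\in G}$ is an $\mathbf{F}_p$‑basis; neither $p$‑primary nor finiteness beyond the sums being finite enters.

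For item (2), I would recall that the $\mathbf{F}_p$‑Malcev completion of $G$ is the group of group‑like elements in $\mathbf{F}_p[[G]] := \varprojlim_n \mathbf{F}_p[G]/I^n$. By the previous proposition, $I^n = 0$ for $n$ large, so the natural map $\mathbf{F}_p[G] \to \mathbf{F}_p[[G]]$ is an isomorphism of Hopf algebras. Combining this with item (1) identifies the Malcev completion with $G$ itself.

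For item (3), the trivial case of a finite $p$‑group is immediate: $G$ is already pro‑$p$ complete (it has only finitely many quotients, all $p$‑groups), so its pro‑$p$ completion is $G$, which by (2) equals the Malcev completion. To extend the equality to an arbitrary group $G$ — which is presumably the intended payoff of the appendix — I would realize both sides as cofiltered limits over the system of normal subgroups $N \triangleleft G$ with $G/N$ a finite $p$‑group. On the pro‑$p$ side this is the definition $G^{(p)} = \varprojlim_N G/N$. On the Malcev side I would identify $\mathbf{F}_p[[G]] \simeq \varprojlim_N \mathbf{F}_p[G/N]$: for each such $N$ the surjection $\mathbf{F}_p[G] \to \mathbf{F}_p[G/N]$ kills $I^n$ for some $n$ because $I_{G/N}$ is nilpotent by the previous proposition, while each finite‑dimensional quotient $\mathbf{F}_p[G]/I^n$ is dominated by $\mathbf{F}_p[G/N]$ for a suitable $N$ of $p$‑power index. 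Because taking group‑like elements commutes with cofiltered limits of finite‑dimensional Hopf algebras, applying (2) to each $G/N$ gives
\[
\mathrm{Grp}(\mathbf{F}_p[[G]]) \;\simeq\; \varprojlim_N \mathrm{Grp}(\mathbf{F}_p[G/N]) \;\simeq\; \varprojlim_N G/N \;=\; G^{(p)}.
\]
The main obstacle is the cofinality step: exhibiting, for each $n$, a normal subgroup $N$ of $p$‑power index such that $\mathbf{F}_p[G] \to \mathbf{F}_p[G/N]$ factors through $\mathbf{F}_p[G]/I^n$ in a way that makes the two inverse systems cofinal. This reduces to showing that the group $1 + I/I^n$ of principal units in $\mathbf{F}_p[G]/I^n$ is a finite $p$‑group (which follows from $I/I^n$ being a finite‑dimensional $\mathbf{F}_p$‑vector space with nilpotent multiplication) and that the image of $G$ in this group has a normal subgroup of $p$‑power index pulling back to a suitable $N$.
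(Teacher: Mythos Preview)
Your argument for (1) is essentially identical to the paper's: both compare coefficients in $\Delta(a)=a\otimes a$ to conclude $a_g\in\{0,1\}$ with at most one nonzero; the paper invokes $a\neq 0$ rather than your augmentation condition $\epsilon(a)=1$ to force a surviving coefficient, but this is cosmetic. For (2), your use of the previous proposition ($I^n=0$ for large $n$) to identify $\bold F_p[[G]]$ with $\bold F_p[G]$ is exactly what the paper's one-line ``follows from the first by the last proposition'' means.

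For (3), the paper says nothing beyond that same one-liner, which for a finite $p$-group reduces to your trivial observation: $G$ is its own pro-$p$ completion, and by (2) also its own $\bold F_p$-Malcev completion. Your extension to an arbitrary group via cofinal inverse systems is the substantive content the appendix's opening sentence promises but the paper never actually supplies. One caution on your sketch: the assertion that $I/I^n$ is a finite-dimensional $\bold F_p$-vector space fails when $G$ is not finitely generated (already $I/I^2\cong G^{\mathrm{ab}}\otimes\bold F_p$ can be infinite-dimensional), so $1+I/I^n$ need not be a finite $p$-group. For the paper's intended application $G=\pi_1(X,\bar x)$ is topologically finitely generated and the issue evaporates, but your argument ``for arbitrary $G$'' needs either that hypothesis or a different route to cofinality.
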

\begin{proof}
(1) Let $a=\sum_{g\in G}a_g[g]$ be a group like element
in $\bold F_p[G]$. Then by the equality
$\Delta(a)=a\otimes a$, we have
$$
a_ga_h=\begin{cases}
a_g &\text{ if } g=h \\
0 &\text{ if } g\neq h \\
\end{cases}
$$
Since $a\neq 0$, there is an element $g\in G$ such that $a_g\neq 0$.
By the above equality, we have $a_g=1$. Therefore $a_h=0$ for $h\neq g$.
Thus we have $a=[g]$.
The second and the third statement follows from 
the first by the last proposition.

\end{proof}

\end{document}